\documentclass{article}
\usepackage{blindtext}
\usepackage{amsfonts,amsthm}
\usepackage{mathtools}
\usepackage{amsmath} 
\usepackage{amssymb}
\usepackage{authblk}
\usepackage{natbib}
\usepackage{babel}
\usepackage{graphicx}
\usepackage{graphics}
\usepackage[dvipsnames]{xcolor} 
\usepackage{bm}
\usepackage[margin = 1.2in]{geometry}
\usepackage{hyperref}
\usepackage{pgfplots}
\pgfplotsset{width = 5cm, compat = 1.17}
\usepackage{algpseudocode}
\usepackage{algorithm}

\theoremstyle{theorem}
\newtheorem{proposition}{Proposition}[section]
\newtheorem{theorem}{Theorem}[section]
\newtheorem{lemma}{Lemma}[section]

\theoremstyle{definition}
\newtheorem{definition}{Definition}
\newtheorem{example}{Example}
\newtheorem{remark}{Remark}

\renewcommand{\Pr}{\mathrm{P}}
\renewcommand{\P}{\mathbb{P}}

\newcommand{\R}{\mathbb{R}}
\newcommand{\E}{\mathbb{E}}

\newcommand{\var}{\text{Var}}
\newcommand{\indpt}{\perp \!\!\! \perp}
\newcommand{\tikzcircle}[2][red,fill=red]{\tikz[baseline=-0.5ex]\draw[#1,radius=#2] (0,0) circle ;}

\definecolor{darkForestGreen}{rgb}{0.0,0.5,0.0}
\definecolor{darkyellow}{rgb}{0.255,0.2,0.0}
\definecolor{darkgreen}{rgb}{0.0,0.5,0.0}

\newcommand{\xvec}{\mbox{\bf x}}

\newcommand{\sigmat}{\mbox{\boldmath $\Sigma$}}

\allowdisplaybreaks 


\title{Conditional Independence Testing Using Exchangeable Pairs}
\author{Bilol Banerjee}
\affil{Department of Statistics and Data Science\\
National University of Singapore\\
E-mail: bilol@nus.edu.sg}
\date{\null}

\begin{document}

\maketitle
\begin{abstract}

This article considers the problem of testing conditional independence between two random vectors \(bm X\) and \(\bm Y\) given a confounding random vector \(\bm Z\). An exchangeable-pairs framework is introduced through which the conditional independence testing problem is reformulated as a two-sample testing problem. The framework is motivated by ideas from the model-X literature and is based on a fundamental exchangeability property that holds under the null hypothesis of conditional independence. An energy-distance/maximum mean discrepancy type measure is employed on the resulting exchangeable pairs to quantify departures from conditional independence. A consistent estimator of the proposed discrepancy measure is constructed and its theoretical properties are established under general assumptions. A conditional independence test is then developed using this estimator as a test statistic and is calibrated through a suitable resampling procedure. It is shown that the proposed test is consistent against fixed alternatives, possesses nontrivial asymptotic power against local contiguous alternatives, attains the minimax separation rate for detecting alternatives characterized by the proposed discrepancy measure, and remains consistent when the data dimension diverges with the sample size. The effect of estimating the conditional distribution used to generate the exchangeable pairs is also investigated, and condition under which validity and power properties are preserved is established. Extensive simulation studies demonstrate that the proposed procedure performs competitively with some state-of-the-art methods.

\vspace{0.1in}

\noindent\textbf{Keywords:} Conditional dependence; Exchangeable pairs; Local alternatives; High-dimensional inference; Model-X framework; Data augmentation. 
\end{abstract}

\section{Introduction}

The notion of conditional dependence, formalized by \cite{dawid1980}, plays a significant role in the field of statistics. For example, it frequently arises in graphical modeling \citep{lauritzen1996graphical,maathuis2018handbook}, causal discovery \citep{pearl1988graphicalmodelbook,peters2017elements}, variable selection \citep{candes2018panning,azadkia2021simple,deb2020measuring}, dimension reduction \citep{cook2002dimension,li2018sufficient}, and computational biology \citep{markowetz2007inferring}. In this article, for a given random vector $(\bm X, \bm Y, \bm Z)$, the notation $\bm X\indpt\bm Y\mid\bm Z$ is used to signify that $\bm X$ is independent of $\bm Y$ given $\bm Z$ and $\bm X\not\indpt\bm Y\mid\bm Z$ is used to denote that $\bm X$ and $\bm Y$ are conditionally dependent given $\bm Z$. The hypotheses of interest are:
\begin{align}
    H_0: \bm X\indpt\bm Y\mid\bm Z~~~~~~ \text{against}~~~~~~ H_1:\bm X\not\indpt\bm Y\mid \bm Z.
    \label{eq:fundamental-problem}
\end{align}
Based on an i.i.d. sample $\mathcal{D} = \{(\bm X_i,\bm Y_i,\bm Z_i)\}_{1\leq i\leq n}$ on the random vector $(\bm X,\bm Y,\bm Z)\sim\Pr$, a test for conditional independence is developed that has some nice theoretical properties. Initial developments in this direction can be traced back to the work in \cite{fisher1924}, where the author studied the theoretical distribution of the partial correlation coefficient under the Gaussianity assumption. Though the partial correlation has desirable local asymptotic properties, it is highly sensitive to the underlying Gaussianity assumption. However, modern computational and analytic tools enabled researchers to store and analyze data from more complicated distributions. This necessitates the development of new methodologies that are applicable in general. A potential alternative to tackle the problems is by taking a nonparametric approach. However, in the context of nonparametric conditional independence testing \cite{shah2020hardness,neykov2021minimax,kim2021local} established the `no-free lunch theorems'. It is now accepted that it is impossible to construct a test for \eqref{eq:fundamental-problem} which simultaneously keeps the type I error rate controlled at a desired level $\alpha\in (0,1)$ and is consistent against any general alternative, unless one assumes some structural conditions on the underlying distribution. Numerous authors have tried to address this issue by proposing characterizing measures under suitable moment and smoothness assumptions on the underlying distribution. Some notable ones include measures based on conditional cumulative distribution functions \citep{linton1996conditional,patra2016nonparametric}, conditional characteristic functions \citep{su2007}, conditional probability density functions \citep{su2008}, empirical likelihood \citep{su2014}, kernel methods \citep{fukumizu2007kernel,zhang2012kernel,doran2014permutation,scetbon2022asymptotic,strobl2019approximate,sheng2023distance,kernel2024practical}, mutual information and entropy \citep{runge2018conditional,li2024k}, Rosenblatt transformations  \citep{cai2022,song2009}, copulas \citep{Bergsma2004TestingCI,veraverbeke2011estimation}, maximal nonlinear conditional correlation \citep{huang2010}, distance correlation \citep{fan2020,szekely2014,wang2015}, regression based approaches \citep{shah2020hardness,petersen2021,Scheidegger2022}, geometric graph-based measures \citep{azadkia2021simple,huang2020kernel,shi2024}, among several others. The readers are referred to \cite{li2020} for a detailed review of the existing literature.

It is widely known that for any nonparametric conditional independence test, the main difficulty lies in the test calibration step. It is important to ensure type I and type II error rate control without losing statistical efficiency. Achieving this for conditional independence tests is challenging in general. Recently, \cite{candes2018panning} proposed the conditional randomization test (CRT) framework. They designed a bootstrap approach to calibrate conditional independence tests assuming the distribution of $\bm X\mid \bm Z$ is known. This can control the type I error rate at any desired level $\alpha\in(0,1)$, but at the expense of restricting the class of distributions to those where $\bm X\mid \bm Z$ is some pre-specified distribution. \cite{berrett2019} also proposed the conditional permutation test that operates under the same assumption as the CRT and has the same desired level properties. However, it is computationally more challenging than the CRT. The authors also studied the robustness of their method when the distribution of $\bm X\mid \bm Z$ is approximated by some other distribution. In the literature, this class of approaches is popularly termed the model-X approach. This framework was popularized by the work of \cite{candes2018panning,candes2019selectiveknockoff} in the context of controlled variable selection and selective inference. \cite{candes2019genehunting, katsevich2019knockoff} provided applications of the framework for genetic datasets.

The model-X approach became popular in the literature of conditional independence testing due to its applicability with any test statistic yielding a test having the desired type I error rate control. Among such tests, \cite{shi2024} proposed a test based on the Azadkia--Chatterjee partial correlation coefficient \citep[][]{azadkia2021simple}. The Azadkia--Chatterjee partial correlation coefficient assumes $Y$ to be univariate, but $\bm X$ and $\bm Z$ can be multi-dimensional. It takes values in $[0,1]$; it is zero if and only if $\bm X\indpt Y\mid \bm Z$ and it is one if and only if $Y$ is a measurable function of $(\bm X,\bm Z)$. \cite{huang2020kernel} generalized this measure for metric-valued data sets, which is termed the kernel partial correlation coefficient, and proposed a test for \eqref{eq:fundamental-problem} using the CRT approach. However, the test based on the Azadkia--Chatterjee partial correlation coefficient has poor performance against local alternatives \citep[see][]{shi2024}. The test proposed by \cite{huang2020kernel} is expected to share similar properties. 

Moreover, implementing the CRT is computationally expensive as it requires computing the test statistic every time a new sample is generated in the bootstrap iteration. Also, these partial correlation coefficients are estimated using the local neighbourhood averaging method based on the confounding variable $\bm Z$. Therefore, when $\bm Z$ is high dimensional, as is common in graphical models or the variable selection literature, the local neighbourhood structure of $\bm Z$ will be affected due to the curse of dimensionality. Since in $d$ dimensions the expected distance between two random points has a lower bound of order $O(n^{-1/d})$ \citep[see Chapter 2 from][]{gyorfi2002distribution}, the local averaging based tests will suffer from poor performance when $\bm Z$ is high dimensional. \cite{liu2021dcrt} considered this problem and proposed a fast CRT via distilling the information of $\bm Z$ from $\bm X$ and $\bm Y$ separately and using the residuals to construct a test. However, their method measures only linear dependence among the variables, which is strongly dependent on the distillation method. Therefore, their approach may not have satisfactory performance if the distillation step is poorly specified.

\subsection{Article Contributions}

Considering these limitations, in this article, a test of conditional independence is developed that can measure non-linear conditional dependence, computationally fast, efficient against local alternatives, and is applicable even when the dimension of the random vector $(\bm X,\bm Y,\bm Z)$ is high compared to the available sample size. The article is organized as follows.

\begin{itemize}
    \item In Section \ref{sec:methodology}, the model-X framework is introduced and a motivation is given about how it can be used for constructing a pair of random vectors is defined which is exchangeable under $H_0$. Using this pair of random vectors a measure of conditional dependence is defined. A consistent estimator of the measure is proposed and some of its desired theoretical properties are established. The estimator is used to construct a test and a new resampling algorithm is designed to calibrate the test utilizing the exchangeability under $H_0$, without generating further new samples. The proposed resampling algorithm has the desired type I error rate control and consistency properties.

    \item In Section \ref{sec:theory}, it is shown that the proposed resampling test is efficient against local contiguous alternatives, minimax rate optimal against a suitable class of alternatives and is consistent against high-dimensional alternatives.

    \item  The empirical performance of the test is evaluated against some state-of-the-art methods using simulated data sets in Section \ref{sec:simulation}.

    \item In Section \ref{sec:estimation-effect}, the effect of using an approximation of $P_{\bm X|\bm Z}$ on the level and power properties of the test is studied for multivariate observations. The effect of high-dimensionality is left as a potential future research direction.
\end{itemize}

\noindent Proofs of all theorems, lemmas, and propositions are provided in the supplementary material.

\section{Methodology}
\label{sec:methodology}

A review of the model-X framework and the notion of null exchangeable pairs is introduced in Section \ref{subsec:mx-civar}. A new measure of conditional dependence based on null exchangeable pairs is then proposed in Section \ref{subsec:the-measure}. A consistent estimator of this measure is developed in Section \ref{subsec:estimation}, and its theoretical properties are studied. Finally, a test of conditional independence based on this estimator is proposed in Section \ref{subsec:test}.

\subsection{The Model-X Framework and The Null Exchangeable Pairs}
\label{subsec:mx-civar}

The model-X framework was popularized by the seminal work of \cite{candes2018panning} in the context of controlled variable selection, where the goal is to identify important covariates $\bm X$ from observations on $(Y, \bm X)$ while controlling the false discovery rate. In this framework, the distribution of $\bm X$ is assumed to be known. The authors argue that in many practical settings (e.g., genetic experiments and clinical trials), obtaining knowledge about the distribution of the covariates $\bm X$ is often easier than specifying the joint distribution of $(Y, \bm X)$. Building on this idea, they proposed the conditional randomization test (CRT) for testing conditional independence under the assumption that the distribution of $\bm X \mid \bm Z$ is known, while leaving the distribution of $Y \mid \bm X, \bm Z$ completely unspecified. Their approach constructs a bootstrap procedure based on the known distribution of $\bm X \mid \bm Z$, which can be combined with an arbitrary test statistic within this restricted model class. The CRT has subsequently facilitated several recent advances in the literature on conditional independence testing; see \cite{liu2021dcrt, niu2022reconciling, katsevich2022UMPMX}.

\begin{figure}[t]
    \centering
    \includegraphics[width=0.45\linewidth]{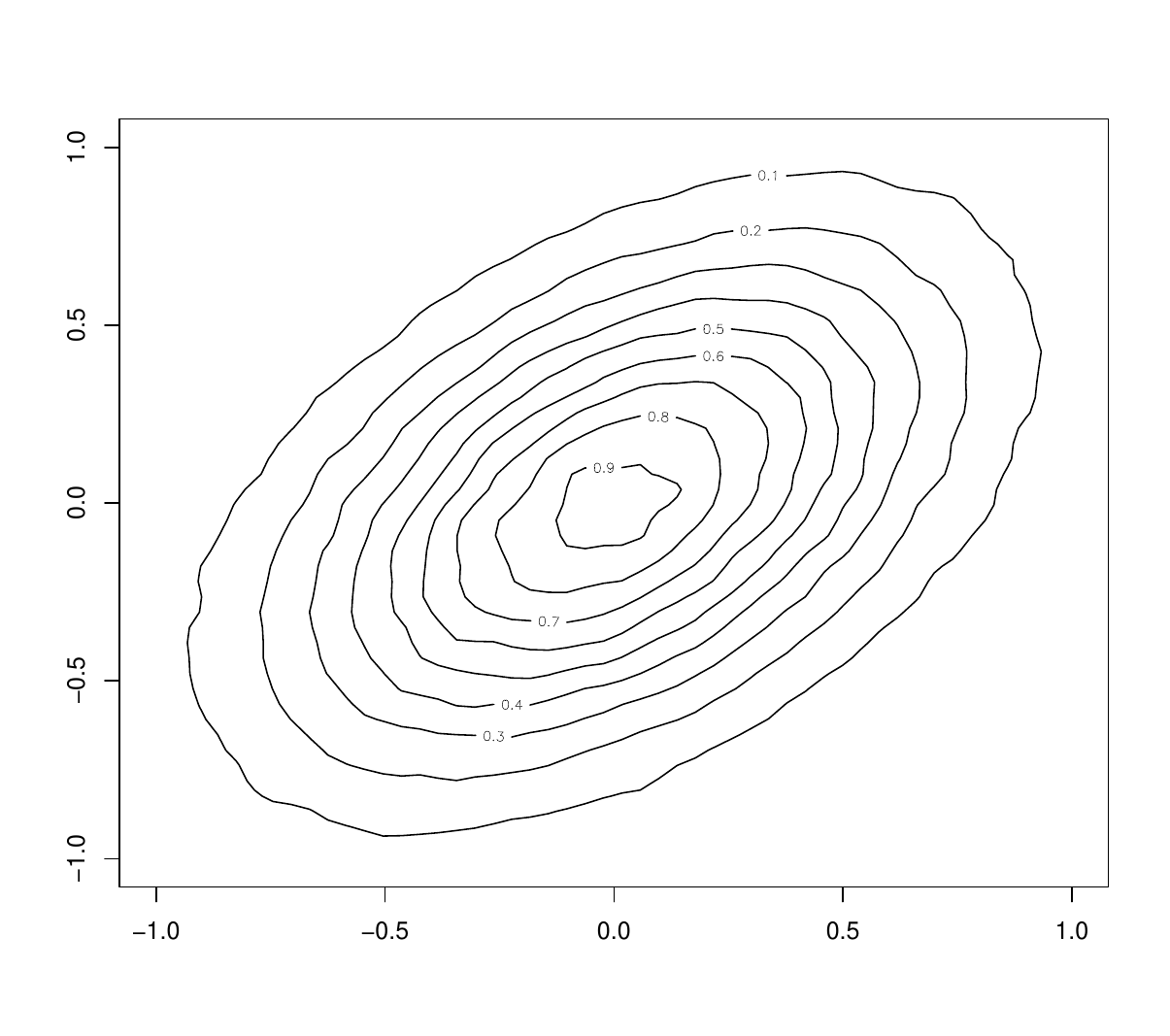}
    \includegraphics[width=0.45\linewidth]{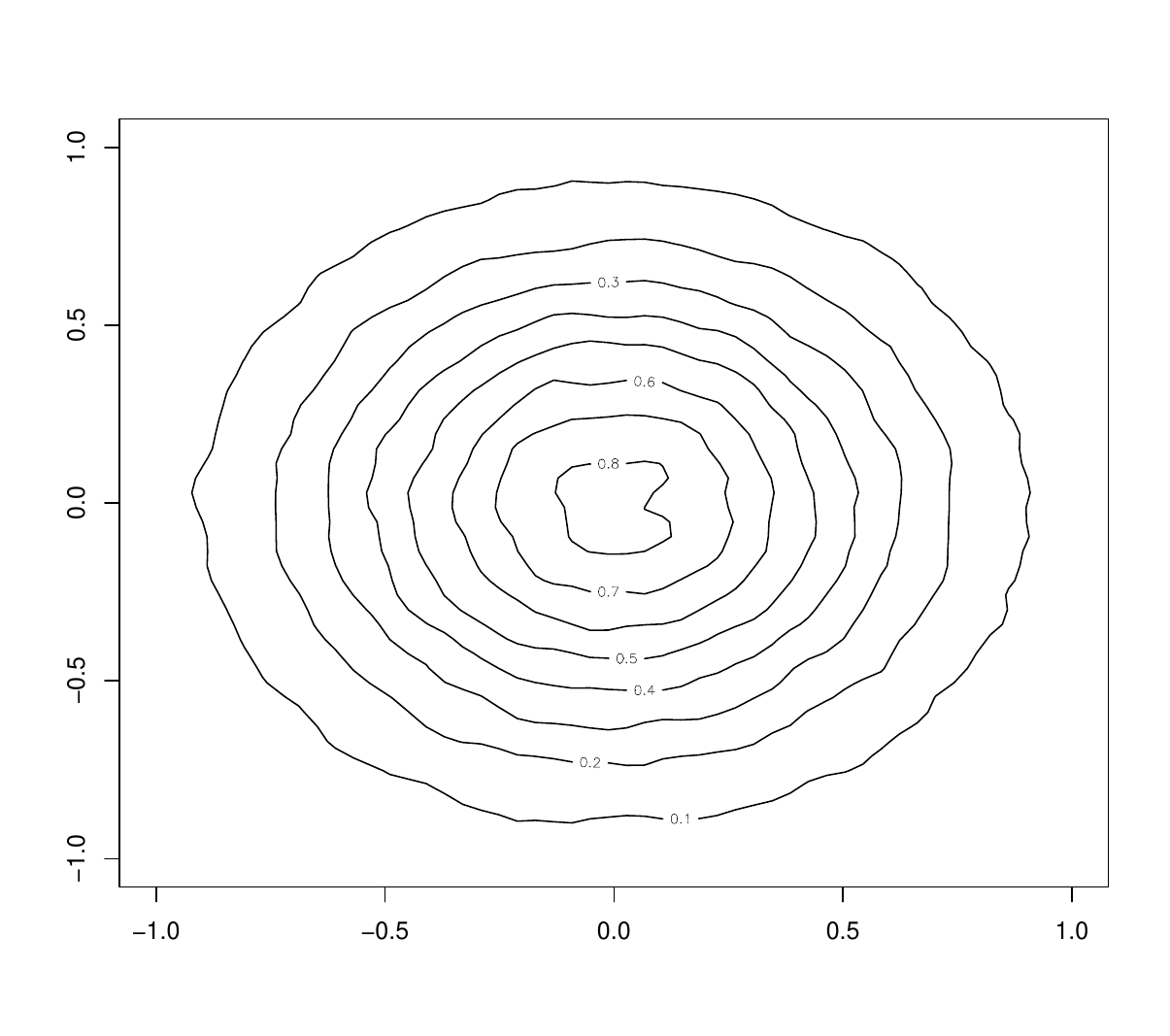}
    \caption{The contour of the probability density function of $(X,Y)|Z=0$ and $(X^\prime, Y)|Z=0$ when $(X,Y,Z)$ follows a trivariate normal distribution with mean at the origin and variance-covariance matrix being the equicorrelation matrix with correlation $\rho_0 = 0.8$.}
    \label{fig:density-contour-plot}
\end{figure}


The foundation of the CRT framework lies in quantifying the strength of conditional dependence through the change in the distribution of the chosen test statistic under $H_0$ and $H_1$. Consequently, the performance of CRT-based tests depends critically on the choice of the test statistic. If the test statistic fails to adequately discriminate between $H_0$ and $H_1$ (even under the restricted model assumptions), the CRT will also fail to reject $H_0$. This limitation can be mitigated by constructing test statistics that explicitly exploit the knowledge of the distribution of $\bm X \mid \bm Z$.


The CRT allows us to generate new observations $\bm X_i' \sim \mathbb{P}_{\bm X \mid \bm Z}$ such that $\bm X' \indpt \bm Y \mid \bm Z$. Therefore, when $H_1$ holds, there is a clear distributional difference between the random vectors $(\bm X, \bm Y, \bm Z)$ and $(\bm X', \bm Y, \bm Z)$. To illustrate this idea, consider the following motivating example. Suppose $(X,Y,Z)$\footnote{Throughout this article, boldface capital letters $\bm X, \bm Y, \bm Z$ denote multivariate random vectors, whereas $X,Y,Z$ denote univariate random variables.} follows a trivariate normal distribution with mean zero and variance--covariance matrix given by the equicorrelation matrix with correlation $\rho_0 = 0.8$. In this case, $X \not\indpt Y \mid Z$ and $X \mid Z \sim \mathcal{N}_1(Z,\, 1-\rho_0^2)$. 

Figure \ref{fig:density-contour-plot} displays the contour plots of the joint density of $(X,Y)\mid Z=0$ and $(X',Y)\mid Z=0$, where $X' \sim \mathcal{N}_1(Z,\, 1-\rho_0^2) \stackrel{D}{=} X \mid Z$. Clearly, the joint distributions of $(X,Y)\mid Z$ and $(X',Y)\mid Z$ differ when $X \not\indpt Y \mid Z$. On the other hand, under $H_0$ we have $(X,Y,Z) \stackrel{D}{=} (X',Y,Z)$. In this article, the random vector $(X',Y,Z)$ is referred to as the \emph{null exchangeable pair} of $(X,Y,Z)$. The notion of a conditionally independent variant is formally defined below.

\begin{definition}
    Let $(\bm X,\bm Y, \bm Z)$ be a random vector. A random vector $(\bm X', \bm Y,\bm Z)$ is said to be a null exchangeable pair of $(\bm X, \bm Y, \bm Z)$ if $\bm X' \mid \bm Z \stackrel{D}{=} \bm X \mid \bm Z$ and $\bm X' \indpt \bm Y \mid \bm Z$.
\end{definition}

The notion of null exchangeable pairs facilitates measuring the strength of conditional dependence by borrowing ideas from the two-sample testing literature \citep[see][]{zekely2023book}. In particular, conditional dependence between $\bm X$ and $\bm Y$ given $\bm Z$ can be assessed by comparing the distributions of $(\bm X,\bm Y,\bm Z)$ and its null exchangeable pair $(\bm X',\bm Y,\bm Z)$. This idea forms the basis of the framework developed in the following section.

\subsection{The Measure of Dependence}
\label{subsec:the-measure}
Let $\varphi_1$ and $\varphi_2$ denote the characteristic functions of $\bm V = (\bm X,\bm Y,\bm Z)$ and $\bm V' = (\bm X',\bm Y,\bm Z)$—the null exchangeable pair of $\bm V$—respectively. It can be verified that $\bm X \indpt \bm Y \mid \bm Z$ if and only if $\varphi_1 = \varphi_2$. Motivated by this observation, a measure of conditional dependence is defined as
\begin{align}
    \zeta_W(\Pr) = \int \Big|\varphi_1(\bm t,\bm u, \bm v)-\varphi_2(\bm t,\bm u, \bm v)\Big|^2 
    W(\bm t, \bm u, \bm v)\, \mathrm d \bm t \, \mathrm d \bm u \, \mathrm d \bm v,
    \label{eq:definition-of-measure}
\end{align}
where $W(\bm t, \bm u, \bm v)$ is a probability density function. Alternative choices of $W$ similar to those considered in \cite{zekely2023book} are also possible. However, such choices typically require additional moment conditions on the distribution of $(\bm X, \bm Y, \bm Z)$ to ensure that $\zeta_W(\cdot)$ is finite. For this reason, $W$ is assumed to be a probability density function in this article. 

Clearly, $\zeta_W(\Pr)$ is non-negative and equals zero if and only if $\varphi_1 = \varphi_2$ under suitable choices of $W$. This result is formally stated in the following theorem.

\begin{proposition}
    $\zeta_W(\mathrm P)$ is non-negative, and if $W$ has support on the entire space, then $\zeta_W(\mathrm P)=0$ if and only if $\bm X \indpt \bm Y \mid \bm Z$.
    \label{thm:characterization}
\end{proposition}
Note that $\zeta_W(\mathrm P)$ is the weighted average of 
$\big|\varphi_1(\bm t,\bm u, \bm v)-\varphi_2(\bm t,\bm u, \bm v)\big|^2$ 
with respect to the weight function $W$. In this article, $W$ is taken to be the density of the multivariate normal distribution $\mathcal{N}(\bm 0, \sigma^2 \mathrm I_d)$, that is, the normal distribution with mean $\bm 0$ and variance--covariance matrix $\sigma^2 \mathrm I_d$ for some $\sigma^2>0$. For this choice of $W$, a closed-form expression for $\zeta_W(\mathrm P)$ can be obtained as follows.



\begin{proposition}
  For any $\bm v_1,\bm v_2\in\R^d$,
\[
\int \exp\left\{i\langle\bm s, \bm v_1-\bm v_2 \rangle\right\}
\frac{1}{(2\pi\sigma^2)^{d/2}}
\exp\left\{-\frac{\|\bm s\|^2}{2\sigma^2}\right\}
\, \mathrm d \bm s
=
\exp\left\{-\frac{\sigma^2}{2}\|\bm v_1-\bm v_2\|^2\right\}.
\]

Using the above identity, the measure can be expressed as
\begin{align}
\label{eq:measure}
\zeta_\sigma(\mathrm P)
&:= \zeta_{\mathcal{N}(\bm 0, \sigma^2 \mathrm I_d)}(\mathrm P) = \E\big[K(\bm V_1, \bm V_2)\big]+\E\big[K(\bm V_1^\prime, \bm V_2^\prime)\big] - 2 \E\big[K(\bm V_1, \bm V_2^\prime)\big],
\end{align}
where $K(\bm x, \bm y) = \exp\{-\sigma^2\|\bm x- \bm y\|^2/2\}$ $\bm V_1 = (\bm X_1,\bm Y_1,\bm Z_1)$ and $\bm V_2 = (\bm X_2,\bm Y_2,\bm Z_2)$ are independent copies of $(\bm X,\bm Y,\bm Z)\sim \mathrm P$, and $\bm V_i'$ denote their respective null exchangeable pairs.
    \label{thm:alt-representation}
\end{proposition}

Note that $\zeta_\sigma(\mathrm P)$ can also be interpreted as the kernel Maximum Mean Discrepancy \citep[see][]{gretton2012kernel} between the distribution of $(\bm X, \bm Y, \bm Z)$ and its null exchangeable pair $(\bm X',\bm Y, \bm Z)$ with respect to the Gaussian kernel 
\(
K(\bm x, \bm y) = \exp\left\{-\sigma^2\|\bm x-\bm y\|^2/2\right\}.
\)
Other choices of the weight function $W$, such as the Cauchy or Laplace distributions, may also be considered. These choices lead to analogous representations involving the Laplace kernel or the inverse quadratic kernel. In the following sections, all theoretical results are presented for $\zeta_\sigma(\mathrm P)$, as defined in \eqref{eq:measure}. However, most of these results admit straightforward extensions when the Gaussian kernel is replaced by other bounded kernels.

It is important to mention that the main conceptual contribution of this article is not the particular choice of the discrepancy measure, but rather the reformulation of conditional independence testing as a two-sample testing problem through the notion of a null exchangeable pairs. Specifically, conditional dependence between $\bm X$ and $\bm Y$ given $\bm Z$ is quantified by comparing the distribution of $(\bm X,\bm Y,\bm Z)$ with that of its null exchangeable pair $(\bm X',\bm Y,\bm Z)$. Once this reformulation is available, a broad range of two-sample discrepancy measures may be employed. Examples include energy distance,  Wasserstein distances, graph-based two-sample statistics, and classifier-based discrepancies. In the present work, we focus on $\zeta_\sigma(\mathrm P)$ because of its popularity in the two-sample testing literature, its close connection with energy-distance-type measures, and the availability of a simple bounded-kernel representation that facilitates both theoretical analysis and efficient computation. 
The resulting methodology should therefore be viewed as one particular instance of a more general null exchangeable pairs framework.

\subsection{Estimation of $\zeta_\sigma(\mathrm P)$}
\label{subsec:estimation}
Before estimating $\zeta_\sigma(\mathrm P)$, note that it can be expressed as a combination of three functionals. Among these, the terms $\E[K(\bm V_1', \bm V_2')]$ and $\E[K(\bm V_1, \bm V_2')]$ are not directly estimable based solely on an i.i.d. sample $\mathcal{D} = \{\bm V_i = (\bm X_i, \bm Y_i, \bm Z_i)\}_{1\leq i\leq n}$ drawn from the distribution $\mathrm P$. However, the knowledge of the conditional distribution $\bm X \mid \bm Z$ can be exploited to adopt a data augmentation approach for estimating $\zeta_\sigma(\mathrm P)$, as described below.

\begin{itemize}
    \item Generate $\bm X_i' \sim \bm X \mid \bm Z_i$ for $i = 1,2,\ldots, n$ to obtain the augmented dataset
    \(
    \mathcal{D}' = \{(\bm X_i, \bm X_i', \bm Y_i, \bm Z_i)\}_{1\leq i\leq n}.
    \)

    \item Using the observations from $\mathcal{D}'$, define
    \begin{align}
    \label{eq:estimator}
        \hat\zeta_{n,\sigma}
        = {n\choose 2}^{-1}
        \sum_{i = 1}^n \sum_{j=i+1}^n
        \Big\{
        K(\bm V_i,\bm V_j)
        + K(\bm V_i',\bm V_j')
        - K(\bm V_i,\bm V_j')
        - K(\bm V_i',\bm V_j)
        \Big\},
    \end{align}
    where $\bm V_i = (\bm X_i, \bm Y_i, \bm Z_i)$ and $\bm V_i' = (\bm X_i', \bm Y_i, \bm Z_i)$ for $i=1,\ldots,n$, and $K(\bm x, \bm y)$ denotes the Gaussian kernel
    \(
    K(\bm x, \bm y) = \exp\!\left(-\sigma^2\|\bm x-\bm y\|^2/2\right).
    \)
\end{itemize}
Clearly, $\hat\zeta_{n,\sigma}$ is a U-statistic with core function
\begin{align}
\label{eq:core}
g^*\big((\bm X_1, \bm X_1', \bm Y_1, \bm Z_1),(\bm X_2, \bm X_2', \bm Y_2, \bm Z_2)\big)
= K(\bm V_1, \bm V_2)
+ K(\bm V_1', \bm V_2')
- K(\bm V_1, \bm V_2')
- K(\bm V_1', \bm V_2).
\end{align}
Here $\bm V_i = (\bm X_i,\bm Y_i,\bm Z_i)$ and $\bm V_i'=(\bm X_i',\bm Y_i,\bm Z_i)$ for $i=1,2$. From the theory of U-statistics, it follows that $\hat\zeta_{n,\sigma}$ is an unbiased estimator of $\zeta_{\sigma}(\mathrm P)$. Moreover, the kernel $g$ is bounded irrespective of the dimension of the data. Therefore, by applying the bounded difference inequality, the following concentration inequality for the proposed estimator can be obtained.
\begin{proposition}
    \label{thm:concentration}
    Let $\{(\bm X_i,\bm Y_i, \bm Z_i)\}_{1\leq i\leq n}$ be an i.i.d. sample from the distribution $\mathrm P$, and let $\bm X_i' \sim \mathrm P_{\bm X \mid \bm Z_i}$ for each $i=1,\ldots,n$. Then, for any $\epsilon>0$,
\begin{align*}
\P\Big(\big|\hat\zeta_{n,\sigma} - \zeta_\sigma(\mathrm P)\big|>\epsilon\Big)
\leq
2\exp\Big(-\frac{n\epsilon^2}{32}\Big).
\end{align*}
This bound holds irrespective of the dimensionality of the data.
\end{proposition}
Using the theory of U-statistics \citep[see][]{lee2019u}, the asymptotic distribution of $\hat\zeta_{n,\sigma}$ can also be derived as the sample size tends to infinity for multivariate observations.

\begin{proposition}(Asymptotic null distribution)
    Let $\{(\bm X_i,\bm Y_i, \bm Z_i)\}_{i=1}^n$ be an i.i.d. sample from the distribution $\mathrm P$, and let $\bm X_i' \sim \mathrm P_{\bm X \mid \bm Z_i}$ for each $i=1,\ldots,n$. Let $\{\lambda_k\}_{k\ge 1}$ denote the eigenvalues of the integral equation
\[
\E\left\{
g^*\big((\bm x, \bm x', \bm y, \bm z),(\bm X, \bm X', \bm Y, \bm Z)\big)
\psi(\bm X, \bm X', \bm Y, \bm Z)
\right\}
=
\lambda \, \psi(\bm x, \bm x', \bm y, \bm z),
\]
where $g^*(\cdot,\cdot))$ is the kernel defined in \eqref{eq:core}. Then, under $H_0$, as $n \to \infty$,
\[
n\hat\zeta_{n,\sigma}
\;\xrightarrow{D}\;
\sum_{i=1}^{\infty} \lambda_i \big(U_i^2 - 1\big),
\]
where $\{U_i\}_{i\ge1}$ are i.i.d. $\mathcal N(0,1)$ random variables.
    \label{thm:asymp-null}
\end{proposition}

Under $H_0$, the kernel $g^*(\cdot,\cdot))$ is completely degenerate. Therefore, the limiting null distribution of $\hat\zeta_{n,\sigma}$ is given by an infinite weighted sum of chi-square random variables. In contrast, under $H_1$, the kernel $g^*(\cdot,\cdot))$ is non-degenerate, and consequently $\hat\zeta_{n,\sigma}$ is asymptotically normal. These results are formally stated in the following theorem.

\begin{proposition}(Asymptotic alternate distribution)
    Let $\{(\bm X_i,\bm Y_i, \bm Z_i)\}_{i = 1}^n$ be an i.i.d. sample from the distribution $\mathrm P$, and let $\bm X_i' \sim \mathrm P_{\bm X \mid \bm Z_i}$ for each $i=1,\ldots,n$. Then, under $H_1$, as $n \to \infty$,
\[
\sqrt{n}\big(\hat\zeta_{n,\sigma}-\zeta_{\sigma}(\mathrm P)\big)
\xrightarrow{D} U,
\]
where $U \sim \mathcal N(0,4\sigma_1^2)$ and 
\[
\sigma_1^2 = \var\!\big(g_1^*(\bm X_1, \bm X_1', \bm Y_1, \bm Z_1)\big),
\]
with $g_1^*(\cdot)$ denoting the first-order Hoeffding projection of the kernel $g^*(\cdot,\cdot)$.
    \label{thm:asymp-alt}
\end{proposition}

Note that $\hat\zeta_{n,\sigma}$ is a randomized estimator, as different values may arise across different realizations of the data augmentation step. Nevertheless, Proposition \ref{thm:concentration} shows that $\hat\zeta_{n,\sigma}$ concentrates exponentially fast around its population counterpart $\zeta_\sigma(\mathrm P)$. Consequently, the effect of this randomization diminishes exponentially fast as the sample size increases.



\subsection{Test of Conditional Dependence}
\label{subsec:test}
Note that $\zeta_{\sigma}$ characterizes $H_0$ and $H_1$ when the conditional distribution $\bm X \mid \bm Z$ is known, and it can be consistently estimated by $\hat\zeta_{n,\sigma}$, which possesses several desirable large-sample properties. Consequently, under $H_1$, $\hat\zeta_{n,\sigma}$ tends to take larger values than under $H_0$. This observation suggests rejecting $H_0$ for large values of $\hat\zeta_{n,\sigma}$. 

However, the theoretical quantiles of the asymptotic distribution of $n\hat\zeta_{n,\sigma}$ given in Proposition \ref{thm:asymp-null} are not analytically tractable. One might be tempted to use a permutation procedure to calibrate the test due to its similarity to the MMD test. However, such a permutation approach is not appropriate in the present setting, since the observations $\{(\bm X_i,\bm Y_i,\bm Z_i)\}_{1\leq i\leq n}$ and $\{(\bm X_i', \bm Y_i,\bm Z_i)\}_{1\leq i\leq n}$ are dependent. 

Nevertheless, under $H_0$, the joint distribution satisfies
\[
(\bm X_i, \bm X_i', \bm Y_i,\bm Z_i) \stackrel{D}{=} (\bm X_i', \bm X_i, \bm Y_i,\bm Z_i),
\quad i=1,2,\ldots,n.
\]
Therefore, exploiting this coordinate exchangeability property, the following resampling algorithm is proposed.

\paragraph{Resampling algorithm}
\begin{enumerate}
    \item[A.] Given the augmented data $\mathcal{D}^\prime$ compute the test statistic $\hat\zeta_{n,\sigma}$.
    \item[B.] Let $\pi = (\pi(1),\ldots, \pi(n))$ be an element in $\{0,~1\}^n$. Define ${{\bf U}_i} = \pi(i) {\bf X}_i + (1-\pi(i)) {{\bf X}_i^\prime}$ and ${{\bf U}_i^\prime } = (1-\pi(i)) {{\bf X}_i}+\pi(i) {{\bf X}_i^\prime}$  for $i=1,2,\ldots,n$. Use $\{ ({\bf U}_i,{\bf U}_i^\prime, \bm Y_i,\bm Z_i)\}_{1\leq i\leq n}$ to compute $\hat{\zeta}_{n,\sigma}(\pi)$, the resampling analogue of $\hat\zeta_{n,\sigma}$.
    \item[C.] Repeat step B for all possible $\pi$ to get the critical value for a level $\alpha$ ($0<\alpha<1$) test as
    $$\hat c_{1-\alpha} = \inf\Big\{t\in\R: \frac{1}{2^n}\sum_{\pi\in \{0,~1\}^n}\text{I}[\hat\zeta_{n,\sigma}(\pi)\leq t]\geq 1-\alpha\Big\}.$$
\end{enumerate}

Reject $H_0$ if $\hat\zeta_{n,\sigma} > \hat c_{1-\alpha}$ at nominal level $\alpha$. The corresponding conditional $p$-value is given by
\begin{align}
p_n = \frac{1}{2^n}\sum_{\pi\in\{0,1\}^n} \mathrm{I}\big[\hat\zeta_{n,\sigma}(\pi) \geq \hat\zeta_{n,\sigma}\big].
\label{eq:cond-pval}
\end{align}
Equivalently, $H_0$ can be rejected if $p_n < \alpha$. The resulting procedure defines a valid level $\alpha$ test, as formally stated in the following theorem.
\vspace{0.05in}
\begin{proposition}
    Let $\hat\zeta_{n,\sigma}$ be the estimator of $\zeta_\sigma(\Pr)$ as defined in \eqref{eq:measure} and $p_n$ be the conditional p-value as defined in \eqref{eq:cond-pval}. Then, under $H_0$, we have $\P\{p_n<\alpha\}\leq \alpha$ irrespective of $n$ and $d$.
    \label{prop:level-property}
\end{proposition}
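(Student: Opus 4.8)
The plan is to recognize steps A--C as a group-invariance (randomization) test and to apply the classical finite-sample validity argument, the only problem-specific input being a distributional-invariance statement that holds exactly under $H_0$. Write the augmented sample as $W=(W_1,\dots,W_n)$ with $W_i=(\bm X_i,\bm X_i^\prime,\bm Y_i,\bm Z_i)$, and for $\pi\in\{0,1\}^n$ let $\tau_\pi$ be the map sending $W_i$ to $(\bm U_i,\bm U_i^\prime,\bm Y_i,\bm Z_i)$ as in step B, i.e.\ the bijection of the sample space that swaps the first two coordinates of $W_i$ precisely for $i$ in the ``swap set'' $S_\pi=\{i:\pi(i)=0\}$. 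First I would record the easy structural facts: the maps $\{\tau_\pi:\pi\in\{0,1\}^n\}$ form a finite group of bijections (composition corresponds to symmetric difference of swap sets, $\tau_{\bm 1}$ is the identity, each $\tau_\pi$ is its own inverse) of order $M:=2^n$; and, writing $T(\cdot)$ for the $U$-statistic functional in \eqref{eq:estimator}, one has $\hat\zeta_{n,\sigma}=T(W)=T(\tau_{\bm 1}W)$ and $\hat\zeta_{n,\sigma}(\pi)=T(\tau_\pi W)$, so that the $p$-value in \eqref{eq:cond-pval} is exactly the rank-type randomization $p$-value $p_n=\frac1M\#\{\pi: T(\tau_\pi W)\ge T(\tau_{\bm 1}W)\}$.

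The crux is the invariance claim: under $H_0$, $\tau_\pi W\stackrel{D}{=}W$ for every fixed $\pi$. I would prove this by conditioning on $(\bm Y_i,\bm Z_i)_{i=1}^n$. By construction each $\bm X_i^\prime$ is drawn from $\P_{\bm X\mid\bm Z_i}$ independently of everything else given $\bm Z_i$, and under $H_0$ we have $\bm X_i\indpt\bm Y_i\mid\bm Z_i$, so the conditional law of $\bm X_i$ given $(\bm Y_i,\bm Z_i)$ is also $\P_{\bm X\mid\bm Z_i}$; together with independence of the records across $i$, this shows that conditionally on $(\bm Y_i,\bm Z_i)_{i=1}^n$ the collection $\{\bm X_i,\bm X_i^\prime\}_{i=1}^n$ is mutually independent with $\bm X_i,\bm X_i^\prime\sim\P_{\bm X\mid\bm Z_i}$. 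Hence within each pair the two coordinates are conditionally exchangeable, jointly over $i$, so swapping any subset of the pairs leaves the conditional law of $W$ unchanged; integrating out $(\bm Y_i,\bm Z_i)_{i=1}^n$ gives $\tau_\pi W\stackrel{D}{=}W$. This is the step I expect to carry the real content, since it is exactly where $H_0$ is used and where one must exploit that the augmentation is done conditionally on $\bm Z_i$ and independently of $(\bm X_i,\bm Y_i)$; everything after it is distribution-free.

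The remainder is the standard argument, which I would run as follows. For $\rho\in\{0,1\}^n$ set $p_\rho:=\frac1M\#\{\pi:T(\tau_\pi W)\ge T(\tau_\rho W)\}$, so $p_n=p_{\bm 1}$. Using the group structure and the substitution $\pi\mapsto\pi\cdot\rho$ one gets the identity $p_\rho(W)=p_{\bm 1}(\tau_\rho W)$, and since $\tau_\rho W\stackrel{D}{=}W$ under $H_0$ this yields $p_\rho(W)\stackrel{D}{=}p_n$, hence $\P(p_\rho\le\alpha)=\P(p_n\le\alpha)$ for every $\rho$. On the other hand, for each realization of $W$ one has the deterministic inequality $\sum_{\rho\in\{0,1\}^n}\mathrm I[p_\rho\le\alpha]\le\#\{\rho:T(\tau_\rho W)>\hat c_{1-\alpha}\}\le\alpha M$: the last bound is immediate from the definition of the empirical $(1-\alpha)$-quantile $\hat c_{1-\alpha}$ of $\{T(\tau_\pi W)\}_\pi$, because at least $\lceil(1-\alpha)M\rceil$ of those values are $\le\hat c_{1-\alpha}$; the first inclusion holds because $p_\rho\le\alpha$ forces $T(\tau_\rho W)>\hat c_{1-\alpha}$ (a short argument that also needs a line of bookkeeping when several of the $T(\tau_\pi W)$ are tied). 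Taking expectations, $M\,\P(p_n\le\alpha)=\sum_\rho\P(p_\rho\le\alpha)=\E\,\#\{\rho:p_\rho\le\alpha\}\le\alpha M$, so $\P(p_n\le\alpha)\le\alpha$, and a fortiori $\P(p_n<\alpha)\le\alpha$. Since nothing in the argument invokes asymptotics or depends on the dimension, the conclusion is exact for all $n$ and $d$, as claimed.
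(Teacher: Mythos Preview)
Your proof is correct and follows essentially the same route as the paper: both rest on the coordinate-exchangeability $(\bm X_i,\bm X_i^\prime,\bm Y_i,\bm Z_i)\stackrel{D}{=}(\bm X_i^\prime,\bm X_i,\bm Y_i,\bm Z_i)$ under $H_0$ (which you justify more carefully by conditioning on $(\bm Y_i,\bm Z_i)_i$) and then apply the classical finite-sample validity argument for group-invariance randomization tests. The paper's execution is marginally different---it introduces a uniformly random $\pi$ independent of the data, notes that $\hat c_{1-\alpha}$ is orbit-invariant so $(\hat\zeta_{n,\sigma},\hat c_{1-\alpha})\stackrel{D}{=}(\hat\zeta_{n,\sigma}(\pi),\hat c_{1-\alpha})$, and concludes via $\E\big[\P\{\hat\zeta_{n,\sigma}(\pi)>\hat c_{1-\alpha}\mid\mathcal{D}^\prime\}\big]\le\alpha$---but this is a well-known equivalent of your counting/averaging argument over all $\rho\in\{0,1\}^n$.
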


Interestingly, the threshold $\hat c_{1-\alpha}$ can also be bounded by a deterministic sequence that does not depend on the dimension $d$ and converges to zero as $n \to \infty$. This result is established below.

\vspace{0.05in}
\begin{proposition}
Let $\{(\bm X_i,\bm Y_i,\bm Z_i)\}_{1\leq i\leq n}$ be i.i.d. observations from a $d$-dimensional distribution $\mathrm P$. Then, for any $\alpha$ with $0<\alpha<1$, the inequality
\(\
\hat c_{1-\alpha} \le \frac{2}{\alpha (n-1)}
\)
holds with probability one.
\label{prop:cutoff-bound}
\end{proposition}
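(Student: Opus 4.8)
The plan is to condition on the augmented data $\mathcal D^\prime = \{(\bm X_i,\bm X_i^\prime,\bm Y_i,\bm Z_i)\}_{1\le i\le n}$ and to regard $\hat c_{1-\alpha}$ as a quantile of the law of $\hat\zeta_{n,\sigma}(\pi)$ obtained by drawing $\pi$ uniformly from $\{0,1\}^n$; write $\E_\pi$ and $\P_\pi$ for expectation and probability under that draw. By the definition of $\hat c_{1-\alpha}$ it suffices to show that the \emph{deterministic} value $t_0 := 2\big(\alpha(n-1)\big)^{-1}$ satisfies $\P_\pi\big[\hat\zeta_{n,\sigma}(\pi) > t_0\big]\le\alpha$, since then $\tfrac1{2^n}\sum_\pi \mathrm I[\hat\zeta_{n,\sigma}(\pi)\le t_0]\ge 1-\alpha$, so $t_0$ belongs to the set whose infimum is $\hat c_{1-\alpha}$, giving $\hat c_{1-\alpha}\le t_0$. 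Because the bound will use nothing about the realized sample other than that $K$ is a bounded positive-definite kernel, proving it for an arbitrary realization yields the ``with probability one'' statement.

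The crux is that $\hat\zeta_{n,\sigma}(\pi)$ is a $U$-statistic, hence can be negative; in fact one checks that it has $\E_\pi$-mean exactly $0$, so Markov's inequality does not apply to it directly. The trick is to dominate it by a nonnegative quantity with $O(1/n)$ mean. Write $\bm V_i(\pi) = (\bm U_i,\bm Y_i,\bm Z_i)$ and $\bm V_i^\prime(\pi) = (\bm U_i^\prime,\bm Y_i,\bm Z_i)$ with $\bm U_i,\bm U_i^\prime$ as in the resampling algorithm, let $b_{ij}(\pi) = g\big((\bm U_i,\bm U_i^\prime,\bm Y_i,\bm Z_i),(\bm U_j,\bm U_j^\prime,\bm Y_j,\bm Z_j)\big)$ be the symmetric core of \eqref{eq:core} evaluated at the resampled points, so that $\hat\zeta_{n,\sigma}(\pi) = \binom{n}{2}^{-1}\sum_{i<j} b_{ij}(\pi)$. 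Since $K$ is positive definite, applying $\sum_{k,l} c_k c_l K(\bm w_k,\bm w_l)\ge 0$ to the $2n$ points $\bm V_1(\pi),\dots,\bm V_n(\pi),\bm V_1^\prime(\pi),\dots,\bm V_n^\prime(\pi)$ with weights $1/n,\dots,1/n,-1/n,\dots,-1/n$ shows that $Q(\pi):= n^{-2}\sum_{i,j} b_{ij}(\pi)\ge 0$. Splitting off the diagonal, $b_{ii}(\pi) = 2 - 2K\big(\bm V_i(\pi),\bm V_i^\prime(\pi)\big) = 2(1-k_i)$ where $k_i := \exp\{-\tfrac{\sigma^2}{2}\|\bm X_i-\bm X_i^\prime\|^2\}\in(0,1]$ is \emph{free of $\pi$} (the two vectors differ only in the $\bm X$-slot, which holds $\bm X_i$ and $\bm X_i^\prime$ in one order or the other). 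Hence $n^2 Q(\pi) = 2\sum_{i}(1-k_i) + n(n-1)\hat\zeta_{n,\sigma}(\pi)$, and therefore $\hat\zeta_{n,\sigma}(\pi)\le \tfrac{n}{n-1}Q(\pi) =: R(\pi)\ge 0$.

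It then remains to bound $\E_\pi[R(\pi)]$. For $i\ne j$ the four equally likely values of $(\pi(i),\pi(j))$ split into two pairs on which $b_{ij}(\pi)$ takes opposite values — this is exactly the coordinate exchangeability invoked for Proposition \ref{prop:level-property} — so $\E_\pi[b_{ij}(\pi)] = 0$; together with the deterministic diagonal this gives $\E_\pi[Q(\pi)] = 2n^{-2}\sum_i(1-k_i)\le 2/n$, whence $\E_\pi[R(\pi)]\le 2/(n-1)$. Markov's inequality now yields $\P_\pi[\hat\zeta_{n,\sigma}(\pi)>t]\le \P_\pi[R(\pi)>t]\le 2\big((n-1)t\big)^{-1}$ for every $t>0$, and choosing $t = t_0 = 2\big(\alpha(n-1)\big)^{-1}$ makes the right-hand side equal to $\alpha$; the reduction of the first paragraph then finishes the proof. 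I expect the only non-routine part to be the algebraic identity $n^2 Q(\pi) = 2\sum_i(1-k_i) + n(n-1)\hat\zeta_{n,\sigma}(\pi)$ and the observation that the diagonal correction $k_i$ is independent of $\pi$; once the nonnegative, small-mean majorant $R(\pi)$ is in hand, everything that follows is a one-line Markov argument.
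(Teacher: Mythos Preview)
Your proposal is correct and is essentially the same argument as the paper's. Your nonnegative majorant $Q(\pi)=n^{-2}\sum_{i,j}b_{ij}(\pi)$ is exactly what the paper writes as $\zeta_\sigma(F_n)$ (the $V$-statistic / empirical MMD), your positive-definiteness argument for $Q(\pi)\ge 0$ is the paper's ``$\zeta_\sigma(F_n)$ is nonnegative'', and your off-diagonal $\E_\pi[b_{ij}(\pi)]=0$ and diagonal bound $b_{ii}(\pi)\le 2$ match the paper's $\delta_n(i,j)$ computation and diagonal estimate, after which both proofs finish with the same Markov step.
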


Therefore, irrespective of the value of $d$, $\hat c_{1-\alpha}$ is of order $O_P(n^{-1})$ and converges to zero almost surely as $n \to \infty$. Under $H_1$, $\hat\zeta_{n,\sigma}$ converges to a positive constant. Consequently, the conditional $p$-value $p_n$ converges to zero as $n \to \infty$, and hence the power of the resulting conditional test converges to one. This result is formally stated in the following proposition.

\vspace{0.05in}
\begin{proposition}
Let $(\bm X_1,\bm Y_1,\bm Z_1),\ldots,(\bm X_n,\bm Y_n,\bm Z_n)$ be i.i.d. observations from a distribution $\mathrm P$ such that $\bm X \not\indpt \bm Y \mid \bm Z$, and suppose that the conditional distribution $\bm X \mid \bm Z$ is known. Then the power of the conditional test based on $p_n$ converges to one as $n \to \infty$.
\label{prop:test-consistency}
\end{proposition}

Although the above resampling algorithm leads to a consistent level $\alpha$ test, computing $\hat c_{1-\alpha}$ exactly can be computationally expensive. Therefore, an approximate $p$-value is proposed using a randomization technique. Generate $\pi_1,\pi_2,\ldots,\pi_B$ independently and uniformly from $\{0,1\}^n$ and compute the randomized $p$-value
\begin{align}
p_{n,B} =
\frac{1}{B+1}
\Bigg(
\sum_{i=1}^B 
\mathrm{I}\{\hat\zeta_{n,\sigma}(\pi_i)\ge \hat\zeta_{n,\sigma}\}
+1
\Bigg).
\end{align}
Then $H_0$ is rejected if $p_{n,B}<\alpha$. The following theorem shows that $p_{n,B}$ provides a close approximation to $p_n$ for large $B$, thereby justifying the use of $p_{n,B}$ for the practical implementation of the proposed test.

\begin{proposition}
    Given the augmented data $\mathcal{D}'$, for any $\epsilon>0$ and $B\in\mathbb{N}$,
\[
\P\!\left(\big|p_{n,B}-p_n\big|>\epsilon+\frac{1}{B+1}\right)
\le
2\exp\{-2B\epsilon^2\}.
\]
Consequently, $|p_{n,B}-p_n|\xrightarrow{\text{a.s.}}0$ as $B\to\infty$.
    \label{thm:monte-carlo-consistency}
\end{proposition}

\begin{remark}
  Note that Proposition \ref{thm:monte-carlo-consistency} ensures that the randomized $p$-value $p_{n,B}$ closely approximates the conditional resampling $p$-value $p_n$ even for moderately large values of $B$. Therefore, in practice, choosing a moderately large $B$ ensures that the test based on $p_{n,B}$ yields essentially the same conclusion as the test based on $p_n$.
\end{remark}

\begin{remark}
{The computational complexity of the proposed test is of order $O(Bn^2)$, which is easily manageable on modern computing systems. In addition, the test statistic is based on pairwise distances between the observed data and their null exchangeable pairs. The pairwise distance matrix needs to be computed only once and can be reused during the resampling step to efficiently compute $p_{n,B}$. This reuse significantly reduces the computational burden compared to the CRT framework. }
\end{remark}

Note that the proposed measure is designed to detect differences in the characteristic functions of the random vector and its null exchangeable pair. This allows it to capture both linear and non-linear conditional dependence between the random vectors. It is worth emphasizing that a related line of work was recently considered by \cite{zhang2025jrssb}. Their contribution is centered on constructing a new conditional independence test by generating a synthetic conditionally independent counterpart $(\Tilde{\bm X},\Tilde{Y},\bm Z)$ through sampling from $P_{\bm X|\bm Z}$ and $P_{\bm Y|\bm Z}$ and then measuring the discrepancy between the resulting distributions using an MMD criterion. In contrast, the primary contribution of the present work is not a new discrepancy measure, but the observation that under the model-X assumption the conditional independence testing problem can be reformulated as a two-sample testing problem through the notion of a null exchangeable pair. This reformulation yields the exchangeability property \((\bm X,\bm X^\prime,\bm Y,\bm Z)\overset{D}{=}(\bm X^\prime,X,Y,Z)\) under the null hypothesis, which is subsequently exploited to develop a resampling procedure that avoids repeatedly generating fresh conditional samples. Consequently, the proposed method can be viewed as a computationally efficient alternative to a direct CRT implementation, whereas \cite{zhang2025jrssb} calibrate their test through a wild bootstrap procedure and do not exploit this exchangeability structure for test calibration. 

We now proceed to study the theoretical properties of the proposed test, including its efficiency against local contiguous alternatives and its performance against high-dimensional alternatives.

\section{Theoretical Analysis}
\label{sec:theory}
Section \ref{subsec:local-asymp} deals with establishing the efficiency of the proposed test against multivariate local contiguous alternatives. The minimax rate optimality of the proposed test is presented in Section \ref{sec:minimax}. In Section \ref{subsec:hd-consistency}, it is established that the proposed test is consistent for high-dimensional alternatives when the dimension of the data is comparable to the sample size.  

\subsection{Efficiency against local alternatives}
\label{subsec:local-asymp}

In this section, firstly, a brief description of the asymptotic framework of interest is provided, then the behavior of the proposed test under this framework is studied under suitable sequences of local contiguous
alternatives. 

Let $\{\mathbb{P}_{n,\theta} : \theta \in \Theta\}$ be a sequence of probability distributions indexed by the sample size $n$, and suppose that the null hypothesis
corresponds to $\theta = 0$. Consider a sequence $\{\theta_n\}$ such that $\mathbb{P}_{n,\theta_n}$ becomes a sequence of probability measures that is contiguous with respect to $\mathbb{P}_{n,0}$. If $\phi_n$ is a sequence of tests at the nominal level
$\alpha \in (0,1)$, i.e., \(\mathbb{E}_{n,0}[\phi_n] \le \alpha\) for all $n\geq 1$,
the quantity of interest is the limiting power of $\phi_n$ under $\mathbb{P}_{n,\theta_n}$, i.e.,
\(\
\lim_{n \to \infty} \mathbb{E}_{n,\theta_n}[\phi_n],
\)
whenever this limit exists. Under contiguity, Le Cam's third lemma ensures that the joint asymptotic behavior
of the test statistic under $\mathbb{P}_{n,0}$ and $\mathbb{P}_{n,\theta_n}$ is
governed by a limiting experiment \citep[see][]{van2000asymptotic, le2000asymptotics}. In classical regular parametric models, this
limiting experiment is Gaussian and the resulting limiting power function admits
a closed-form expression in terms of the Gaussian distribution function. For instance, under the local alternatives $\theta_n = n^{-1/2} \beta$, the power of a test often converges to a
non-degenerate limit
\[
\lim_{n \to \infty} \mathbb{E}_{n,\theta_n}[\phi_n] = \mathcal{L}(\beta),
\]
where $\mathcal{L}(\beta) \in (\alpha,1)$ is a well-defined function of the local parameter
$\beta$. The function $\mathcal{L}(\beta)$ characterizes the asymptotic performance of the test
under contiguous alternatives. For instance, for asymptotically normal tests that rejects the null hypothesis for large values of the test statistic often has
$$\mathcal{L}(\beta)= 1 - \Phi(z_{1-\alpha} - \gamma(\beta)),$$
where $z_{1-\alpha}$ is the $(1-\alpha)$-th quantile of the standard Gaussian distribution and $\gamma(\beta)$ is a non-decreasing function in $\beta$ \citep{lehmann2005testing}. $\gamma(\beta)$ represents the local asymptotic slope of the test. If $\gamma(\beta)$ is a non-zero function, the test is called efficient in the Pitman sense \citep{bhattacharya2019general}.

But more generally, Pitman efficiency compares competing tests through
their limiting power functions under contiguous alternatives. A test
is said to be locally asymptotically efficient if its limiting power
function is non-trivial, that is, if $\mathcal{L}(\beta)>\alpha$ for some
$\beta\neq0$. Many popular tests in multivariate statistics are efficient in the Pitman sense. However, there also exists some tests which are inefficient, i.e., $\gamma(\beta) = 0$ for all $\beta\neq 0$. Some popular contributions in this direction include the graph based two sample tests \cite{bhattacharya2019general}, the Azadkia-Chatterjee's correlation based CRT \cite{shi2022power}.

In the language of Le Cam's theory of statistical
experiments, $\mathcal{L}(\beta)$ corresponds to the power of the induced test
in the limiting experiment \citep[see][]{le2000asymptotics}, and efficiency comparisons may therefore be carried out directly at that level \cite{van2000asymptotic}. Importantly, the absence of an
explicit Gaussian representation for $\mathcal{L}(\beta)$ does not affect the
validity of a Pitman-type efficiency analysis. What is essential is
the existence of a non-degenerate limiting power function under
contiguous alternatives. Two different tests can also be compared using this approach. Between two different tests, the test with a dominant $\mathcal{L}(\beta)$ will be more efficient than the other in the Pitman sense. A test with nontrivial limiting $\mathcal{L}(\beta)$ will be referred to as an efficient test in the Pitman sense. Such an approach has been recently considered in \cite{chatterjee2024boostingmmd}. Motivated by their approach, the performance of the proposed test is evaluated against alternatives of the form $$F_{1-\beta_n n^{-1/2}} = (1-\beta_n n^{-1/2}) G + (\beta_n n^{-1/2}) F,$$ where $\beta_n$ is a sequence of positive numbers converging to $\beta\in[0,\infty)$ as $n\rightarrow \infty$, $\zeta_\sigma(G)=0$, but $\zeta_\sigma(F)>0$. Assume that $F$ and $G$ admit probability density functions $f$ and $g$, respectively. The following result shows that under suitable assumptions on $F$ and $G$, $F_{1-\beta_n n^{-1/2}}$ turns out to be a contiguous and locally asymptotically normal alternative to $G$.

\begin{proposition}
   Let $\{(\bm X_i,\bm Y_i,\bm Z_i)\}_{1\le i\le n}$ be i.i.d. observations from the distribution $G$ with probability density function $g$ such that $\zeta_\sigma(G)=0$. Let
\[
F_{1-\beta_n n^{-1/2}}
=
\big(1-\beta_n n^{-1/2}\big)G + \beta_n n^{-1/2} F,
\]
where $\zeta_\sigma(F)>0$ and $F$ has probability density function $f$. Assume that
\[
\E\!\left[\frac{f(\bm X_1,\bm Y_1,\bm Z_1)}{g(\bm X_1,\bm Y_1,\bm Z_1)}-1\right]^2 < \infty
\]
and $\beta_n \to \beta \in [0,\infty)$ as $n\rightarrow\infty$. Then
\begin{align*}
&\log \prod_{i=1}^n
\left(
1+\frac{\beta_n}{\sqrt n}
\left\{
\frac{f(\bm X_i,\bm Y_i,\bm Z_i)}{g(\bm X_i,\bm Y_i,\bm Z_i)}-1
\right\}
\right) \\
&=
\frac{\beta}{\sqrt n}
\sum_{i=1}^n
\left\{
\frac{f(\bm X_i,\bm Y_i,\bm Z_i)}{g(\bm X_i,\bm Y_i,\bm Z_i)}-1
\right\}
-\frac{\beta^2}{2}
\E\!\left[
\frac{f(\bm X_1,\bm Y_1,\bm Z_1)}{g(\bm X_1,\bm Y_1,\bm Z_1)}-1
\right]^2
+o_P(1).
\end{align*}
    \label{prop:lanfd}
\end{proposition}

Now, to study the asymptotic behaviour of the proposed test under such alternatives, the asymptotic behaviour of $n\hat \zeta_{n,\sigma}$ is evaluated when the observations are drawn from $F_{1-\beta_n n^{-1/2}}$ in the following theorem.

\vspace{0.05in}
\begin{theorem}(Local limit distribution)
Let $\{(\bm X_i,\bm Y_i, \bm Z_i)\}_{1\le i\le n}$ be i.i.d. observations from $F_{1-\beta_n n^{-1/2}}$ satisfying the conditions of Proposition \ref{prop:lanfd}, and let $\bm X_i' \sim \mathrm P_{\bm X \mid \bm Z_i}$ for each $i=1,\ldots,n$. Then, as $n\to\infty$,
\[
n\hat\zeta_{n,\sigma}
\xrightarrow{D}
\sum_{k=1}^{\infty}
\lambda_k
\left(
\big(U_k + \beta\, \E_F[\psi_k(\bm X_1,\bm X_1',\bm Y_1,\bm Z_1)]\big)^2 - 1
\right),
\]
where $\{U_k\}_{k\ge1}$ are i.i.d. standard normal random variables, and $\{\lambda_k\}_{k\ge1}$ and $\{\psi_k\}_{k\ge1}$ are the eigenvalues and eigenfunctions of the integral equation
\[
\E\!\left\{
g^*\big((\bm x, \bm x', \bm y, \bm z),(\bm X, \bm X', \bm Y, \bm Z)\big)
\psi(\bm X, \bm X', \bm Y, \bm Z)
\right\}
=
\lambda \psi(\bm x, \bm x', \bm y, \bm z),
\]
where $g^*(\cdot,\cdot))$ is as in \eqref{eq:core}, $(\bm X,\bm Y,\bm Z)\sim G$, and $(\bm X',\bm Y,\bm Z)$ is the null exchangeable pair of $(\bm X,\bm Y,\bm Z)$.
\label{thm:local-limit}
\end{theorem}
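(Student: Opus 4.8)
The plan is to obtain the local limit by combining the spectral (Hoeffding) decomposition of the degenerate U-statistic that already underlies Theorem~\ref{thm:asymp-null}, the local asymptotic normality expansion of Proposition~\ref{prop:lanfd}, and Le Cam's third lemma. Throughout I would work on the null model, i.e.\ treat $\{(\bm X_i,\bm Y_i,\bm Z_i)\}$ as i.i.d.\ from $G$ with $\bm X_i^\prime$ the associated CI variants and $\bm W_i=(\bm X_i,\bm X_i^\prime,\bm Y_i,\bm Z_i)$, derive the \emph{joint} limit of $\big(n\hat\zeta_{n,\sigma},\Lambda_n\big)$ under $G$ (where $\Lambda_n$ is the log-likelihood ratio of $F_{1-\beta_n n^{-1/2}}$ against $G$), and then read off the limit of $n\hat\zeta_{n,\sigma}$ under the alternative by the change-of-measure formula of Le Cam's third lemma; the mutual contiguity needed to make this legitimate is exactly what Proposition~\ref{prop:lanfd} supplies.

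First I would record the spectral decomposition. Since the core $g$ in \eqref{eq:core} is bounded and the associated kernel on $L^2(G_{\bm W})$ is positive (note $g((\bm w,\bm w))=2\{1-K(\bm v,\bm v^\prime)\}\ge 0$) and Hilbert--Schmidt, Mercer's theorem gives $g((\bm w_1),(\bm w_2))=\sum_k\lambda_k\psi_k(\bm w_1)\psi_k(\bm w_2)$ with $\{\psi_k\}$ orthonormal in $L^2(G_{\bm W})$, $\lambda_k\ge 0$, $\sum_k\lambda_k^2<\infty$; moreover complete degeneracy of $g$ under $H_0$ forces $\E_G[\psi_k(\bm W_1)]=0$ for every $k$ with $\lambda_k\neq 0$. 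Substituting into $n\hat\zeta_{n,\sigma}=(n-1)^{-1}\sum_{i\ne j}g((\bm W_i),(\bm W_j))$ yields $n\hat\zeta_{n,\sigma}=\sum_k\lambda_k\big(\tfrac{n}{n-1}S_{n,k}^2-c_{n,k}\big)$, where $S_{n,k}=n^{-1/2}\sum_i\psi_k(\bm W_i)$ and $c_{n,k}=(n-1)^{-1}\sum_i\psi_k(\bm W_i)^2\to 1$ a.s. This is the representation behind Theorem~\ref{thm:asymp-null}.

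Next I would establish, under $G$, the joint weak convergence of $\big(S_{n,1},\dots,S_{n,m},L_n\big)$ for each fixed $m$, where $L_n=\beta_n n^{-1/2}\sum_i\xi_i$ and $\xi_i=f(\bm X_i,\bm Y_i,\bm Z_i)/g(\bm X_i,\bm Y_i,\bm Z_i)-1$ is the linear term in Proposition~\ref{prop:lanfd}; this is a Cram\'er--Wold plus Lyapunov CLT for i.i.d.\ sums, since $\psi_k\in L^2(G_{\bm W})$ and $\xi_1\in L^2(G)$ by the maintained assumption. The limiting covariance structure is $\mathrm{Cov}(S_{n,k},S_{n,l})\to\delta_{kl}$, $\mathrm{Var}(L_n)\to\beta^2\E_G[\xi_1^2]$, and the crucial cross term $\mathrm{Cov}(S_{n,k},L_n)\to\beta\,\E_G[\psi_k(\bm W_1)\xi_1]$ ($\beta_n$ may be replaced by its limit $\beta$ at no cost). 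The key identification step is to simplify this last quantity: conditioning on $\bm V_1=(\bm X_1,\bm Y_1,\bm Z_1)$ and writing $\bar\psi_k(\bm v)=\E[\psi_k(\bm X_1,\bm X_1^\prime,\bm Y_1,\bm Z_1)\mid \bm V_1=\bm v]$ for the inner average over the CI variant, one gets $\E_G[\psi_k(\bm W_1)\xi_1]=\E_G\big[\bar\psi_k(\bm V_1)\{f/g-1\}(\bm V_1)\big]=\E_F[\bar\psi_k(\bm V_1)]-\E_G[\bar\psi_k(\bm V_1)]=\E_F\{\psi_k(\bm X_1,\bm X_1^\prime)\}$, using $\E_G[\bar\psi_k(\bm V_1)]=\E_G[\psi_k(\bm W_1)]=0$ and the fact that the CI variant is governed by the conditional law of $\bm X$ given $\bm Z$, common to $F$ and $G$. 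Call this limit $\beta\tau_k$.

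Finally I would assemble the pieces. For fixed $m$, the continuous-mapping theorem gives, under $G$, joint convergence of the truncated statistic $\sum_{k\le m}\lambda_k(\tfrac{n}{n-1}S_{n,k}^2-c_{n,k})$ together with $\Lambda_n$; letting $m\to\infty$ and controlling the spectral tail uniformly in $n$ via $\E_G\big[\big(\sum_{k>m}\lambda_k(\tfrac{n}{n-1}S_{n,k}^2-c_{n,k})\big)^2\big]\lesssim\sum_{k>m}\lambda_k^2\to 0$ (transferred to the contiguous alternative through Proposition~\ref{prop:lanfd}) yields, under $G$, $\big(n\hat\zeta_{n,\sigma},\Lambda_n\big)\xrightarrow{D}\big(\sum_k\lambda_k(U_k^2-1),\,\Lambda\big)$, with $(U_1,U_2,\dots)$ i.i.d.\ $\mathcal N(0,1)$ jointly Gaussian with $Z_\xi:=\lim L_n\sim\mathcal N(0,\beta^2\E_G[\xi_1^2])$, $\mathrm{Cov}(U_k,Z_\xi)=\beta\tau_k$, and $\Lambda=Z_\xi-\tfrac12\beta^2\E_G[\xi_1^2]$ by Proposition~\ref{prop:lanfd}. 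Le Cam's third lemma then says the law of $n\hat\zeta_{n,\sigma}$ under $F_{1-\beta_n n^{-1/2}}$ is the $e^{\Lambda}$-tilt of its $G$-limit; since tilting a jointly Gaussian family by $e^{Z_\xi-\frac12\mathrm{Var}(Z_\xi)}$ shifts the mean of each $U_k$ by $\mathrm{Cov}(U_k,Z_\xi)=\beta\tau_k$ (Cameron--Martin), the limit becomes $\sum_k\lambda_k\big((U_k+\beta\tau_k)^2-1\big)$ with $U_k$ i.i.d.\ standard normal and $\beta\tau_k=\beta\,\E_F\{\psi_k(\bm X_1,\bm X_1^\prime)\}$, which is the asserted distribution. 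The step I expect to be the main obstacle is precisely this interchange of the $n\to\infty$ and $m\to\infty$ limits while dragging $\Lambda_n$ along: one must show the post-truncation spectral tail is asymptotically negligible in probability \emph{both} under $G$ and under the contiguous alternative, and that the truncated vector converges jointly with $\Lambda_n$, i.e.\ make the infinite-dimensional Gaussian-chaos-limit-jointly-with-$\Lambda_n$ argument fully rigorous.
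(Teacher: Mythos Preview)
Your proposal is correct and follows essentially the same route as the paper: spectral decomposition of the degenerate U-statistic, a joint CLT under $G$ for the eigenfunction sums together with the linear term of the log-likelihood ratio from Proposition~\ref{prop:lanfd}, identification of the cross-covariance as $\beta\,\E_F\{\psi_k(\bm X_1,\bm X_1^\prime)\}$, and then Le Cam's third lemma to obtain the mean shift under $F_{1-\beta_n n^{-1/2}}$. The paper's own proof is considerably terser---it states the bivariate CLT and the covariance formula for a generic $h$, invokes Le Cam's third lemma, and then defers the passage from finitely many eigenfunctions to the full series to ``similar arguments as in Theorem~1 in p.~79 from \cite{lee2019u} and contiguity arguments''---so your explicit discussion of the truncation, the $L^2$ tail bound $\sum_{k>m}\lambda_k^2$, and the Cameron--Martin interpretation of the tilt actually fills in steps the paper leaves implicit.
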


Theorem \ref{thm:local-limit} shows that, under $F_{1-\beta_n n^{-1/2}}$, the limiting distribution of $n\hat\zeta_{n,\sigma}$ is stochastically larger than its null limiting distribution given in Proposition \ref{thm:asymp-null} when $\bm V\sim G$. However, since the test is calibrated using a resampling algorithm, this result alone is not sufficient to establish the efficiency of the proposed test. In particular, the local asymptotic behaviour of the resampled test statistic $n\hat\zeta_{n,\sigma}(\pi)$ must also be analyzed. The following theorem establishes the Pitman efficiency of the proposed test against $F_{1-\beta_n n^{-1/2}}$.
\begin{theorem}
 Let $\bm V_1=(\bm X_1,\bm Y_1,\bm Z_1),\ldots,\bm V_n=(\bm X_n,\bm Y_n,\bm Z_n)\stackrel{i.i.d.}{\sim}F_{1-\beta_n n^{-1/2}}$ satisfying the conditions of Proposition \ref{prop:lanfd}. Then the following statements hold.

\begin{itemize}

\item[(a)] If $\beta_n\to 0$, the power of the resampling test converges to the nominal level $\alpha$.

\item[(b)] If $\beta_n\to\beta\in(0,\infty)$, the power of the resampling test converges to
\[
\mathcal L(\beta)
=
\P\!\left[
\sum_{i=1}^{\infty}
\lambda_i
\Big(
(Z_i+\beta\,\E_F[\psi_i(\bm X_1,\bm X_1',\bm Y_1,\bm Z_1)])^2-1
\Big)
\ge
z_{1-\alpha}
\right],
\]
where $\{Z_i\}_{i\ge1}$ are i.i.d. $\mathcal N(0,1)$ random variables, $\{\lambda_i\}_{i\ge1}$ and $\{\psi_i\}_{i\ge1}$ are as defined in Theorem \ref{thm:local-limit}, and $z_{1-\alpha}$ is the $(1-\alpha)$ quantile of the distribution of $\sum_{i=1}^{\infty}\lambda_i(Z_i^2-1)$.

\item[(c)] If $\beta_n\to\infty$, the power of the resampling test converges to one.

\end{itemize}
 \label{thm:local-limit-per}
\end{theorem}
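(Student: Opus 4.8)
The plan is to pair the local limit law for $n\hat\zeta_{n,\sigma}$ supplied by Theorem~\ref{thm:local-limit} with a limit theorem for the resampling critical value $\hat c_{1-\alpha}$, and then to conclude with Slutsky's theorem. Since the test rejects precisely when $\hat\zeta_{n,\sigma}>\hat c_{1-\alpha}$, i.e.\ when $n\hat\zeta_{n,\sigma}>n\hat c_{1-\alpha}$ --- which agrees with $\{p_n<\alpha\}$ up to ties that are asymptotically negligible because every limit law below has a density --- the whole statement reduces to showing that, along $F_{1-\beta_n n^{-1/2}}$ with $\beta_n\to\beta\in[0,\infty)$, one has $n\hat c_{1-\alpha}\stackrel{P}{\rightarrow} z_{1-\alpha}$, the $(1-\alpha)$-quantile of $L_0:=\sum_k\lambda_k(Z_k^2-1)$. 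Granting this, Theorem~\ref{thm:local-limit} gives $n\hat\zeta_{n,\sigma}\stackrel{D}{\longrightarrow}L_\beta:=\sum_k\lambda_k\big((Z_k+\beta a_k)^2-1\big)$ with $a_k:=\E_F\{\psi_k(\bm X_1,\bm X_1^\prime)\}$; since $n\hat c_{1-\alpha}$ tends to a constant, Slutsky yields $n\hat\zeta_{n,\sigma}-n\hat c_{1-\alpha}\stackrel{D}{\longrightarrow}L_\beta-z_{1-\alpha}$, and continuity of the law of $L_\beta$ makes the rejection probability converge to $\P[L_\beta\geq z_{1-\alpha}]$. For $\beta=0$ this equals $\P[L_0\geq z_{1-\alpha}]=\alpha$, giving part~(a); for $\beta\in(0,\infty)$ it equals $\mathcal L(\beta)$, giving part~(b).

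The crux is the behaviour of $\hat c_{1-\alpha}$, which I would analyse through an exact algebraic identity for the resampling statistic. Write $\bm W_i=(\bm X_i,\bm X_i^\prime,\bm Y_i,\bm Z_i)$ and let $s$ denote the swap $(\bm x,\bm x^\prime,\bm y,\bm z)\mapsto(\bm x^\prime,\bm x,\bm y,\bm z)$. The core $g$ in \eqref{eq:core} admits the feature-map form $g(\bm w_1,\bm w_2)=\langle\Psi(\bm w_1),\Psi(\bm w_2)\rangle$ with $\Psi(\bm x,\bm x^\prime,\bm y,\bm z)=\phi(\bm x,\bm y,\bm z)-\phi(\bm x^\prime,\bm y,\bm z)$, where $\phi$ is the canonical feature map of the Gaussian kernel $K$; hence $\Psi(s\bm w)=-\Psi(\bm w)$ and therefore $g(s^{a}\bm w_1,s^{b}\bm w_2)=(-1)^{a+b}g(\bm w_1,\bm w_2)$ for $a,b\in\{0,1\}$. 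Because the resampled datum is $\bm W_i^\pi=s^{\,1-\pi(i)}\bm W_i$, this forces the pointwise identity
\begin{equation*}
\hat\zeta_{n,\sigma}(\pi)={n\choose 2}^{-1}\sum_{i<j}g(\bm W_i^\pi,\bm W_j^\pi)={n\choose 2}^{-1}\sum_{i<j}\eta_i\eta_j\,g(\bm W_i,\bm W_j),\qquad \eta_i:=(-1)^{\pi(i)},
\end{equation*}
so that under a uniform draw of $\pi$ the $\eta_i$ are i.i.d.\ Rademacher signs and $\hat\zeta_{n,\sigma}(\pi)$ is a degenerate, order-two Rademacher chaos. Inserting the Mercer expansion $g=\sum_k\lambda_k\psi_k\otimes\psi_k$ that holds under $G$ --- here $g$ is bounded, continuous and positive semidefinite, and $\zeta_\sigma(G)=0$ implies (Theorem~\ref{thm:characterization}) that $\bm X\indpt\bm Y\mid\bm Z$, hence $(\bm X,\bm Y,\bm Z)\stackrel{D}{=}(\bm X^\prime,\bm Y,\bm Z)$ in the augmented sample, which makes $g$ completely degenerate under $G$ --- I obtain
\begin{equation*}
n\hat\zeta_{n,\sigma}(\pi)=\sum_k\lambda_k\left[\left(\frac{1}{\sqrt{n-1}}\sum_{i=1}^n\eta_i\psi_k(\bm W_i)\right)^2-\frac{1}{n-1}\sum_{i=1}^n\psi_k(\bm W_i)^2\right].
\end{equation*}

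Conditionally on the augmented data, a Lindeberg-type conditional central limit theorem for these Rademacher-weighted sums --- applied jointly over $k$ and then truncated in the spectrum using $\sum_k\lambda_k^2=\E_G[g^2]<\infty$ --- should show that the conditional law of $n\hat\zeta_{n,\sigma}(\pi)$ converges weakly, in probability, to $L_0$, provided $\frac1n\sum_i\psi_k(\bm W_i)\psi_\ell(\bm W_i)\stackrel{P}{\rightarrow}\E_G[\psi_k\psi_\ell]=\delta_{k\ell}$ and $\frac1n\max_i\psi_k(\bm W_i)^2\stackrel{P}{\rightarrow}0$. Under $G^n$ these follow at once from the law of large numbers and the orthonormality of the $\psi_k$, and, being statements of convergence in probability to a constant, they transfer to $F_{1-\beta_n n^{-1/2}}^{\,n}$ by the mutual contiguity of Proposition~\ref{prop:lanfd} (available because $\beta_n\to\beta<\infty$). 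Hence the conditional resampling distribution function converges uniformly in probability to the continuous distribution function of $L_0$, so $n\hat c_{1-\alpha}\stackrel{P}{\rightarrow}z_{1-\alpha}$, which closes parts (a) and (b). For part~(c), where $\beta_n\to\infty$ and neither contiguity nor Theorem~\ref{thm:local-limit} applies, I would argue differently: Proposition~\ref{prop:cutoff-bound} gives the distribution-free bound $n\hat c_{1-\alpha}\leq 2n/(\alpha(n-1))$ almost surely, so the cutoff is bounded, while unbiasedness of $\hat\zeta_{n,\sigma}$ and a Taylor expansion of \eqref{eq:measure} at $G$ give $\E[n\hat\zeta_{n,\sigma}]=n\zeta_\sigma(F_{1-\beta_n n^{-1/2}})$ of order $\beta_n^2\to\infty$, and the Hoeffding decomposition --- whose linear term vanishes at $G$ and hence has variance $O(\beta_n^2/n^2)$ at the alternative --- gives $\var(n\hat\zeta_{n,\sigma})=O(\beta_n^2)$; Chebyshev then forces $n\hat\zeta_{n,\sigma}\stackrel{P}{\rightarrow}\infty$, so the power tends to one.

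The hard part will be the conditional central limit theorem for the quadratic Rademacher chaos $n\hat\zeta_{n,\sigma}(\pi)$ and, above all, identifying its conditional limit as \emph{exactly} the null law $L_0$ rather than a shifted or rescaled version: this requires uniform control of the spectral tail, a conditional Lindeberg argument for possibly unbounded eigenfunctions, and care that the empirical-moment convergences survive the passage from $G^n$ to the contiguous sequence $F_{1-\beta_n n^{-1/2}}^{\,n}$. The remaining pieces --- Slutsky's theorem, continuity of the limit laws, and convergence of the quantile map --- are routine.
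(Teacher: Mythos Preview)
Your proposal is correct and follows essentially the same route as the paper: for (a) and (b) both arguments exploit the anti-symmetry of $g$ under the swap $(\bm x,\bm x')\mapsto(\bm x',\bm x)$ to show the conditional resampling law of $n\hat\zeta_{n,\sigma}(\pi)$ converges weakly in probability to $L_0$ (the paper works through the eigenfunction identity $\psi_k(\bm x,\bm x')=-\psi_k(\bm x',\bm x)$ and a Lindeberg-type conditional CLT, which is exactly your Rademacher-chaos representation $\hat\zeta_{n,\sigma}(\pi)=\binom{n}{2}^{-1}\sum_{i<j}\eta_i\eta_j\,g(\bm W_i,\bm W_j)$ unpacked), then transfers the needed empirical-moment convergences from $G^n$ to $F_{1-\beta_n n^{-1/2}}^{\,n}$ by contiguity. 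For (c) the paper likewise combines Proposition~\ref{prop:cutoff-bound} with a Chebyshev argument, obtaining the exact identity $\zeta_\sigma(F_{1-\beta_n n^{-1/2}})=\tfrac{\beta_n^2}{n}\zeta_\sigma(F)$ from a direct mixture expansion (rather than a Taylor expansion) and the variance bound of Proposition~\ref{prop:baic-prop}, but the conclusion is identical to yours.
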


Theorem \ref{thm:local-limit-per} shows that as $\beta$ increases from zero to infinity, the power of the proposed test increases from $\alpha$ to one. Hence, the proposed test is efficient in the Pitman sense. However, the exact expression of the limiting power function $\mathcal{L}(\beta)$ is not analytically tractable, as discussed earlier. This result establishes the Pitman efficiency of the test against local contiguous alternatives of the form $F_{1-\beta_n n^{-1/2}}$. Similar results can also be obtained for the alternatives considered in \cite{shi2024}. The particular choice of $F_{1-\beta_n n^{-1/2}}$ is motivated by its analytical convenience and ease of sample generation.

\begin{figure}[h!]
\centering
    
\begin{tikzpicture}[scale = 1.2]
\begin{axis}[xmin = 100, xmax = 500, ymin = 0, ymax = 1, xlabel = {Sample Size}, ylabel = {Power Estimates}, title = {\bf $\mathcal{L}(\beta)$},
    grid=major,
    major grid style={gray!30},
    tick style={black},
    axis line style={black},
    mark size=1.2pt
]
\addplot[color = red,   mark = *, step = 1cm,very thin]coordinates{(100, 0.043)(200,0.042)(300,0.046)(400,0.054)(500,0.061)}; 

\addplot[color = blue,   mark = diamond*, step = 1cm,very thin]coordinates{(100,0.071)(200,0.077)(300,0.064)(400,0.071)(500,0.076)};

\addplot[color = purple, mark = square*, step = 1cm, very thin]coordinates{(100,0.141)(200,0.137)(300,0.132)(400,0.128)(500,0.134)};

\addplot[color = magenta,   mark = *, step = 1cm,very thin]coordinates{(100,0.310)(200,0.300)(300,0.283)(400,0.312)(500,0.304)};

\addplot[color = ForestGreen,   mark = diamond*, step = 1cm,very thin]coordinates{(100,0.647)(200,0.598)(300,0.559)(400,0.575)(500,0.584)};

\end{axis}
\end{tikzpicture}
\caption{
Power of the proposed test for different values of $\beta$ ($1$ (\textcolor{red}{$\tikzcircle{2pt}$}), $3$ (\textcolor{blue}{$\blacklozenge$}), $5$ (\textcolor{purple}{$\blacksquare$}), $7$ (\textcolor{magenta}{$\tikzcircle{2pt}$}), and $9$ (\textcolor{ForestGreen}{$\blacklozenge$})) and sample sizes $n=100,\ldots,500$. The samples are generated from a mixture of two distributions: (a) $X=Z+\eta_1$, $Y=Z+\eta_2$, and (b) $X=Z+\eta_1$, $Y=Z+X+\eta_2$, where $Z,\eta_1,\eta_2\stackrel{i.i.d.}{\sim}\mathcal{N}(0,1)$. The mixing proportions are $(1-\beta/\sqrt{n})$ and $\beta/\sqrt{n}$, respectively.
}
    \label{fig:Pitman}
\end{figure}


Consider an example where $\bm V_1=(X_1,Y_1,Z_1),\ldots,\bm V_n=(X_n,Y_n,Z_n)$ are randomly sampled from a mixture of two distributions with mixing proportions $(1-\beta/\sqrt{n})$ and $\beta/\sqrt{n}$. Under the first component,
\(
X = Z+\eta_1\) and \( Y = Z+\eta_2,
\)
where $Z,\eta_1,\eta_2 \stackrel{i.i.d.}{\sim} \mathcal N(0,1)$. Under the second component,
\(
X = Z+\eta_1\) and \( Y = Z+X+\eta_2,
\)
with $Z,\eta_1,\eta_2 \stackrel{i.i.d.}{\sim} \mathcal N(0,1)$.

The power of the test is estimated as the proportion of times the null hypothesis in \eqref{eq:fundamental-problem} is rejected over $1000$ replications of the experiment. The results are reported in Figure \ref{fig:Pitman} for different values of $\beta$ and sample sizes $n$. As predicted by Theorem \ref{thm:local-limit-per}, the power converges to a nontrivial limit as the sample size increases, confirming the Pitman efficiency of the proposed test. Several existing tests for conditional independence do not possess this efficiency property, which highlights the advantage of the proposed method.

\subsection{Minimax rate optimality}
\label{sec:minimax}

Let us consider a testing problem involving a pair of hypotheses 
$$H_0:\zeta_\sigma(\Pr)=0\text{ and }H_1^\prime: \zeta_\sigma(\Pr)>\epsilon(n),$$ 
where $\epsilon(n)$ is a positive number that depends on the sample size $n$. Let $\mathcal{F}(\epsilon(n)):=\{\Pr\mid \zeta_\sigma(\Pr)>\epsilon(n)\}$ be the class of alternatives under $H_1^\prime$ and $\mathbb{T}_{n}(\alpha)$ be the class of all nonrandomized level $\alpha$ test. The minimax type II error rate for this problem is defined as
$$R_{n}\big(\epsilon(n)\big) = \inf_{\phi\in\mathbb{T}_{n}(\alpha)}\sup_{F\in\mathcal{F}(\epsilon(n))} \P_{F^n}\{\phi = 0\},$$
where $\P_{F^n}$ denotes the probability corresponding to the joint distribution of the augmented data $\{(\bm X_i,\bm X_i^\prime, \bm Y_i, \bm Z_i)\}_{i=1}^n$. Here we want to find an optimum choice of $\epsilon(n)$ (call it $\epsilon_{0}(n)$) such that the following two conditions hold.
\begin{enumerate}
    \item[(a)] \label{cond_a} 
    For any $0<\beta<1-\alpha$, there exists a constant $c(\alpha,\beta)>0$ such that for all $0 < c < c(\alpha,\beta)$, we have 
    $$\liminf\limits_{n \rightarrow \infty} R_{n}(c~\epsilon_0({n})) \geq \beta.$$
    
    \item[(b)] \label{cond_b} There exists a level $\alpha$ test $\phi_0$ such that for any $0<\beta<1-\alpha$, there exists $C(\alpha,\beta) > 0$ for which $$\limsup\limits_{n \rightarrow \infty} \sup\limits_{F\in\mathcal{F}(c~\epsilon_0({n}))}\P_{F^n}\{\phi_0=0\}\leq \beta$$ for all $c>C(\alpha,\beta)$, or in other words, $$\limsup\limits_{n \rightarrow \infty} R_{n}(c~\epsilon_0({n})) \leq \beta$$ for all $c > C(\alpha,\beta)$.
  
\end{enumerate}

This optimal rate $\epsilon_0({n})$ is called the minimax rate of separation for the above problem, and the test $\phi_0$ is called the minimax rate optimal test. Theorem \ref{thm:minimax-lower-bound} below shows that here $\epsilon_0(n)$ cannot be of order smaller than $O(n^{-1})$. So, for any $0<\beta<1-\alpha$ and any $\phi\in \mathbb{T}_n(\alpha)$, we can always find a distribution $F$ with $\zeta(F)$ of the order $O(n^{-1})$ or smaller such that the type II error rate of the test $\phi$, i.e., $\P_{F^n}\{\phi=0\}$ is larger than $\beta$.

\begin{theorem}
For $0<\beta<1-\alpha$, there exists a constant $c_0(\alpha,\beta)$ such that the minimax type II error rate $R_{n}(cn^{-1})$ is lower bounded by $\beta$ for all $n$ and all $c\in(0,c_0(\alpha, \beta))$.
\label{thm:minimax-lower-bound}
\end{theorem}

\begin{remark}
    In particular, consider the distribution $F_{\delta_n} = (1-\delta_n)F+\delta_n G$ where $\zeta(G)=0$ and $\zeta(F)>0$. Using Lemma \ref{lem:relation-contamination} from the supplementary materials we get $\zeta(F_{\delta_n})=(1-\delta_n)^2 \zeta(F)$. 
    Hence, if $\delta_n$ is such that $n(1-\delta_n)^2\rightarrow 0$ as $n \rightarrow \infty$ (i.e. $\zeta(F_{\delta_n})$ is of smaller asymptotic order than $O(n^{-1})$), then the power of any level $\alpha$ test will fall below the nominal level $\alpha$. 
    \label{remark4}
\end{remark}

In the next theorem, we establish that in the case of $\epsilon_0(n)=n^{-1}$, our test based on ${\hat \zeta}_n$ satisfies the condition (b) stated above. Therefore, these two theorems (Theorem \ref{thm:minimax-lower-bound} and \ref{thm:minimax-upper-bound}) together show that the minimax rate of separation is $\epsilon_0({n})=n^{-1}$, and our proposed test has the minimax rate optimality for the class of alternatives $\mathcal{F}(\epsilon(n))$. 

\begin{theorem}
For any $\beta \in (0,1-\alpha)$, there exists a constant $C_0(\alpha,\beta)$ (independent of $d$) such that asymptotically the maximum type II error of the test based on $\hat\zeta_{n,\sigma}$ over $\mathcal{F}(cn^{-1})$  is uniformly bounded above by $\beta$ for all $c>C_0(\alpha,\beta)$, i.e.,
$$ \limsup\limits_{n \rightarrow \infty}\sup_{F\in \mathcal{F}(c\lambda({n}))}\P_{F^n}(\hat\zeta_{n,\sigma}\leq c_{1-\alpha})\leq \beta$$ for all $c>C_0(\alpha,\beta).$
\label{thm:minimax-upper-bound}
\end{theorem}

{ \begin{remark}
    Consider the same example as in Remark \ref{remark4}. Since $\zeta(F_{\delta_n}) = (1-\delta_n)^2 \zeta(F)$, from the proof of Theorem \ref{thm:minimax-upper-bound}, it can be shown that if $n(1-\delta_n)^2 \rightarrow \infty$ as $n \rightarrow \infty$
    (i.e. $\zeta(F_{\delta_n})$ is of higher asymptotic order than $O(n^{-1})$), the power of our test converges to one. 
\end{remark}}

The proposed conditional independence test may be implemented within the conditional randomization test (CRT) framework by combining the discrepancy measure \(\zeta_\sigma(\mathrm P)\) with repeated conditional resampling from \(P_{\bm X|\bm Z}\). Such a procedure is viewed as the natural CRT implementation of the proposed discrepancy measure. In contrast, a single augmented sample is reused by the resampling procedure developed in Section \ref{sec:methodology}, and the exchangeability property \( (\bm X,\bm X^\prime,\bm Y,\bm Z)\overset{D}{=}(\bm X^\prime,\bm X,\bm Y,\bm Z)\) under \(H_0\) is exploited, thereby avoiding the repeated generation of new conditional samples.

At first sight, it may be expected that this computational simplification would come at the expense of statistical efficiency. It is demonstrated by the minimax analysis that this is not the case. In Theorem \ref{thm:minimax-lower-bound}, it is established that no level-\(\alpha\) test can reliably distinguish alternatives satisfying \(\zeta_\sigma(\mathrm P)=o(n^{-1})\), whereas in Theorem \ref{thm:minimax-upper-bound}, it is shown that alternatives of order \(n^{-1}\) are consistently detected by the proposed exchangeability-based procedure. Consequently, the minimax separation rate associated with the discrepancy measure \(\zeta_\sigma(\mathrm P)\) is attained by the proposed test.

An important implication is carried by this observation. Since reliance on the same discrepancy measure \(\zeta_\sigma(\mathrm P)\) is made by a CRT-based implementation, a fundamentally smaller minimax detection boundary cannot be achieved. Therefore, it is suggested by the minimax analysis that the limiting factor is the discrepancy measure itself rather than the calibration mechanism used to construct the test. In particular, although finite-sample performance may differ between the two approaches, no loss of statistical efficiency occurs at the level of minimax rate optimality when the CRT calibration step is replaced by the proposed exchangeability-based resampling scheme.

The distinction between the two procedures is therefore primarily computational rather than statistical. Repeated conditional resampling and repeated evaluations of the test statistic are avoided by the proposed approach, thereby substantially reducing the computational burden while retaining the same asymptotic detection rate. Consequently, a computationally attractive alternative to the CRT implementation is provided by the exchangeability-based procedure, particularly in large-scale applications where computational cost is a primary concern.

\subsection{High-dimensional consistency}
\label{subsec:hd-consistency}

Recent advances in computational technology have enabled researchers across various fields to collect and analyze large-scale datasets. These datasets often contain observations on a large number of features, which may be comparable to or even exceed the sample size. Such settings are commonly referred to as high-dimensional data regimes. The emergence of high-dimensional datasets has motivated the development of statistical procedures that remain scalable with increasing dimensionality. A widely used approach in the literature is to construct algorithms based on pairwise distances. Since the proposed test statistic is also based on pairwise distances, it naturally extends to such settings. However, to understand its behaviour in high-dimensional regimes, it is necessary to study how the dimensionality of the data affects the fundamental properties of the test statistic. The following result provides insight into this direction.

\begin{proposition}
    Let $\{(\bm X_i,\bm Y_i,\bm Z_i)\}_{1\le i\le n}$ be i.i.d. observations from a $d$-dimensional distribution $\mathrm P$. Then, irrespective of the dimension $d$,
\begin{align*}
\E[\hat\zeta_{n,\sigma}] &= \zeta_\sigma(\mathrm P), \hspace{0.1in}\text{and}\hspace{0.1in}
\var[\hat\zeta_{n,\sigma}] \le {n\choose 2}^{-1}\Big(4(n-1)\zeta_\sigma(\mathrm P) + 4\Big).
\end{align*}
    \label{prop:baic-prop}
\end{proposition}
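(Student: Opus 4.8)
The plan is to exploit the fact that $\hat\zeta_{n,\sigma}$ is a U-statistic of order two with bounded kernel $g$ (as in \eqref{eq:core}), and apply the classical variance formula for U-statistics. Unbiasedness, $\E[\hat\zeta_{n,\sigma}] = \zeta_\sigma(\Pr)$, follows immediately from the linearity of expectation and the fact that each summand $K(\bm V_i,\bm V_j) + K(\bm V_i^\prime,\bm V_j^\prime) - 2K(\bm V_i,\bm V_j^\prime)$ has expectation $\zeta_\sigma(\Pr)$ by Theorem \ref{thm:alt-representation}; this requires only that $(\bm V_i,\bm V_i^\prime)$ are i.i.d. pairs, which holds because $\bm X_i^\prime$ is drawn from $\P_{\bm X|\bm Z_i}$ independently across $i$. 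No dimension dependence enters here.

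For the variance, I would recall the standard decomposition $\var[\hat\zeta_{n,\sigma}] = {n\choose 2}^{-1}\big[ 2(n-2)\zeta_1 + \zeta_2 \big]$, where $\zeta_1 = \var\big(\E[g(\bm W_1,\bm W_2)\mid \bm W_1]\big)$ is the variance of the first projection and $\zeta_2 = \var\big(g(\bm W_1,\bm W_2)\big)$ is the full kernel variance, with $\bm W_i = (\bm X_i,\bm X_i^\prime,\bm Y_i,\bm Z_i)$. The goal is then to bound $\zeta_1$ and $\zeta_2$ by quantities controlled by $\zeta_\sigma(\Pr)$ and absolute constants. Since $K$ takes values in $(0,1]$, the kernel $g$ takes values in $[-2,2]$, so trivially $\zeta_2 \le \E[g^2] \le 2\,\E[g] = 2\zeta_\sigma(\Pr)$ if one can show $|g|\le 2$ pointwise and also $\E[g]=\zeta_\sigma(\Pr)\ge 0$ — more carefully, writing $g = g_+ - g_-$ or just using $\E[g^2]\le 2\E|g|$ combined with a symmetrization argument, one wants $\E[g^2]\le 4\zeta_\sigma(\Pr)$ or a similar bound. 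Similarly $\zeta_1 \le \zeta_2$ always (Cauchy--Schwarz / Jensen on the conditional expectation), so both can be folded into a bound of the form $\zeta_1,\zeta_2 \le c\,\zeta_\sigma(\Pr) + c'$. Plugging into the decomposition and using ${n\choose 2}^{-1}\cdot 2(n-2) \le {n\choose 2}^{-1}\cdot 4(n-1)$ (crudely) and ${n\choose 2}^{-1}\zeta_2 \le {n\choose 2}^{-1}\cdot 4$ yields the stated bound $\var[\hat\zeta_{n,\sigma}] \le {n\choose 2}^{-1}[4(n-1)\zeta_\sigma(\Pr) + 4]$. The dimension-freeness is automatic because every constant comes from the boundedness of the Gaussian kernel, not from the ambient dimension $d$.

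The main obstacle is getting the bound $\E[g^2] \le C\,\zeta_\sigma(\Pr)$ (with $C$ an absolute constant, targeting $C=4$ to match the claimed inequality) rather than a crude $\E[g^2]\le 4$ that would lose the $\zeta_\sigma(\Pr)$ scaling. The trick will be to write $g(\bm W_1,\bm W_2)$ as an inner product of feature-space differences: with the RKHS feature map $\phi$ for the Gaussian kernel, $g(\bm W_1,\bm W_2) = \langle \phi(\bm V_1)-\phi(\bm V_1^\prime),\, \phi(\bm V_2)-\phi(\bm V_2^\prime)\rangle$, so that by Cauchy--Schwarz $g(\bm W_1,\bm W_2)^2 \le \|\phi(\bm V_1)-\phi(\bm V_1^\prime)\|^2 \,\|\phi(\bm V_2)-\phi(\bm V_2^\prime)\|^2$, and taking expectations with independence gives $\E[g^2] \le \big(\E\|\phi(\bm V_1)-\phi(\bm V_1^\prime)\|^2\big)^2$. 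Since $\E\|\phi(\bm V_1)-\phi(\bm V_1^\prime)\|^2 = \E[K(\bm V_1,\bm V_1)] + \E[K(\bm V_1^\prime,\bm V_1^\prime)] - 2\E[K(\bm V_1,\bm V_1^\prime)]$ — and here one must be slightly careful: this is a ``diagonal'' quantity, not $\zeta_\sigma(\Pr)$, but it equals $2 - 2\E[K(\bm V_1,\bm V_1^\prime)] = 2\,\E[1 - K(\bm V_1,\bm V_1^\prime)]$, which I would bound by a constant times $\zeta_\sigma(\Pr)$ by relating $\E[1-K(\bm V_1,\bm V_1^\prime)]$ to the MMD via the inequality $\E\|\phi(\bm V)-\phi(\bm V^\prime)\|^2 \le 2\,\mathrm{MMD}^2 = 2\zeta_\sigma(\Pr)$ — valid because for any two random elements $\|\E\phi(\bm V) - \E\phi(\bm V^\prime)\|^2 \le \E\|\phi(\bm V)-\phi(\bm V^\prime)\|^2$ would go the wrong way, so instead I would bound $\E\|\phi(\bm V_1)-\phi(\bm V_1^\prime)\|^2$ directly by $2$ (each term $\le 1$) when $\zeta_\sigma(\Pr)$ is large, and note that for the $\zeta_2$ contribution to the variance the factor ${n\choose 2}^{-1}$ already suppresses it, so the constant $4$ (not $4\zeta_\sigma(\Pr)$) in the second term of the claimed bound absorbs exactly this diagonal piece. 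I would structure the final writeup so that the $2(n-2)\zeta_1$ term carries the $\zeta_\sigma(\Pr)$-proportional bound (via the Cauchy--Schwarz/feature-map argument applied to the projection $g_1(\bm W_1) = \E[g(\bm W_1,\bm W_2)\mid\bm W_1]$, which satisfies $|g_1|\le \|\phi(\bm V_1)-\phi(\bm V_1^\prime)\|\cdot\|\E[\phi(\bm V_2)-\phi(\bm V_2^\prime)]\| $ and $\E[g_1^2] = \zeta_1 \le \|\mu_1-\mu_2\|^2 \cdot \mathrm{(bounded)} $ where $\mu_1-\mu_2$ is the mean embedding difference whose squared norm is $\zeta_\sigma(\Pr)$ itself) and the $\zeta_2$ term carries only the absolute constant — this matches the asymmetric $4(n-1)\zeta_\sigma(\Pr) + 4$ form exactly.
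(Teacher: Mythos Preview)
Your final plan is correct and coincides with the paper's argument. Both use the U-statistic variance decomposition $\var[\hat\zeta_{n,\sigma}] = \binom{n}{2}^{-1}[2(n-2)\zeta_1 + \zeta_2]$, bound $\zeta_2 \le 4$ trivially from $|g|\le 2$, and bound $\zeta_1 \le 2\zeta_\sigma(\Pr)$ via Cauchy--Schwarz; the only cosmetic difference is that the paper carries out the Cauchy--Schwarz step in the characteristic-function representation $g_1(\bm v_1,\bm v_1^\prime) = \int (e^{i\langle \bm t,\bm v_1\rangle} - e^{i\langle \bm t,\bm v_1^\prime\rangle})\overline{(\varphi_1(\bm t)-\varphi_2(\bm t))}\,\phi_0(\bm t)\,\mathrm d\bm t$ (giving $g_1^2 \le 2\zeta_\sigma(\Pr)$ directly), whereas you phrase it via the RKHS feature map $g_1(\bm W_1) = \langle \phi(\bm V_1)-\phi(\bm V_1^\prime),\, \E\phi(\bm V_2)-\E\phi(\bm V_2^\prime)\rangle$---for the Gaussian kernel these are the same Hilbert space and the same inequality.

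Two minor comments: first, the meandering in your middle paragraph (attempting $\E[g^2]\le C\zeta_\sigma(\Pr)$, then backtracking) should be cut---you correctly identify at the end that $\zeta_2$ carries only the absolute constant and $\zeta_1$ carries the $\zeta_\sigma(\Pr)$ factor, so write that directly. Second, the factor $4(n-1)$ in the statement is a slight loosening of the $4(n-2)$ you (and the paper) actually obtain from $2(n-2)\cdot 2\zeta_\sigma(\Pr)$; no extra crude step is needed.
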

Note that the effect of dimensionality on the proposed estimator primarily arises through the dependence of $\zeta_\sigma(\mathrm P)$ on the dimension $d$. In high-dimensional settings, if $\zeta_\sigma(\mathrm P)$ remains bounded away from zero (that is, the measure continues to detect conditional dependence between the random variables), then the estimator will still concentrate around $\zeta_\sigma(\mathrm P)$ provided that the sample size is sufficiently large. A stronger result can be established by characterizing the rate at which $\zeta_\sigma(\mathrm P)$ may converge to zero. This is formally stated in the following theorem.

\begin{proposition}
  Let $\{(\bm X_i,\bm Y_i,\bm Z_i)\}_{1\le i\le n}$ be i.i.d. observations from a $d$-dimensional distribution $\mathrm P$, where the dimension $d=d(n)$ is allowed to grow with the sample size $n$. If $n\,\zeta_\sigma(\mathrm P)\to\infty$ as $n\to\infty$, then the power of the proposed test converges to one as $n,d\to\infty$.
    \label{thm:high-dim-consistency}
\end{proposition}

{\begin{remark}
Let \((\bm X, \bm Y, \bm Z) \sim \mathrm{P} = \mathcal{N}(\bm \mu,\bm \Sigma_1)\). Then, it turns out that $(\bm X^\prime, \bm Y, \bm Z)\sim \mathcal{N}(\bm \mu,\bm \Sigma_2)$ with $\bm \Sigma_1 = \bm \Sigma_2 + \bm \Delta$, where 
$$\Delta = \begin{pmatrix}
    0 & \bm \Sigma_{XY|Z} & 0\\
    \bm \Sigma_{YX|Z} & 0 & 0 \\
    0 & 0 & 0
\end{pmatrix},$$
$\bm \Sigma_{XY|Z}$ being the partial cross-covariance matrix between $\bm X$ and $\bm Y$ given $\bm Z$, defined as 
$$\bm \Sigma_{XY|Z} = \E\left[(\bm X - \E[\bm X|\bm Z])(\bm Y-\E[\bm Y|\bm Z])^\top\right].$$
Then, $\zeta_\sigma(\mathrm{P})$ depends only on the variance-covariance matrices and is given by
\begin{align*}
    \zeta_{\sigma}(\mathrm{P})
& =
\frac{1}{\sqrt{\det(I+2\sigma^2\bm \Sigma_1)}}
+
\frac{1}{\sqrt{\det(I+2\sigma^2\bm \Sigma_2)}}
-
\frac{2}{\sqrt{\det(I+\sigma^2(\bm \Sigma_1+\bm \Sigma_2))}}\\
& = \frac{1}{\sqrt{\det(I+2 \sigma^2\bm \Sigma_2+2\sigma^2 \bm \Delta)}}
+
\frac{1}{\sqrt{\det(I+2\sigma^2\bm \Sigma_2)}}
-
\frac{2}{\sqrt{\det(I+2\sigma^2\bm \Sigma_2+\sigma^2 \bm\Delta))}}
\end{align*}
Therefore, for Gaussian distributions, the magnitude of $\zeta_{\sigma}(\mathrm P)$ depends only on $\bm \Sigma_{XY\mid Z}$. In particular, $\zeta_{\sigma}(\mathrm P)=0$ if $\bm \Sigma_{XY\mid Z}=\bm 0$, and it is positive otherwise. Under suitable regularity conditions it can also be established that when $\|\Sigma_{XY|Z}\|_F^2\rightarrow 0$, $\zeta_\sigma(\mathrm P) \gtrsim \|\Sigma_{XY|Z}\|_F^2$ and hence the high-dimensional consistency holds when $n\|\Sigma_{XY|Z}\|_F^2$ diverges to infinity as $n$ and $d$ diverges to infinity simultaneously. Here $\|\cdot\|_F$ denotes the Frobenius norm. Interested readers are referred to Section A of the supplementary material for a more detailed discussion.
\end{remark}
}

\begin{remark}
 In practice, if $\liminf_{d\to\infty}\zeta_\sigma(\mathrm P)>0$, the power of the proposed test converges to one even when the sample size increases at a very slow rate. On the other hand, if $\liminf_{d\to\infty}\zeta_\sigma(\mathrm P)=0$, the sample size must increase appropriately to ensure satisfactory performance of the test, which can be controlled by the practitioner. Otherwise, the power of the test may fall below the level $\alpha$. To the best of our knowledge, results of this type have not been explored previously in the literature on conditional independence testing. 
\end{remark}

 {
\section{Empirical Analysis}

\label{sec:simulation}
In this section, the empirical performance of the proposed test is evaluated using synthetic experiments where it is assumed that practitioner has access to the conditional distribution $\bm X \mid \bm Z$ and can generate new observations from that distribution. 

\subsection{Synthetic data analysis}
\label{sec:synthetic-data-analysis}

The empirical performance of the proposed test is compared with several existing methods in the literature, including the test based on Azadkia–Chatterjee's partial correlation coefficient studied in \cite{shi2022power}, the conditional distance correlation test proposed in \cite{wang2015} under the CRT framework, the Generalized Covariance Measure (GCM) test of \cite{shah2020hardness}, and the weighted Generalized Covariance Measure (wGCM) test of \cite{Scheidegger2022}. These methods are referred to as the AC test, the cDC.CRT test, the GCM test, and the wGCM test, respectively.

The GCM test can be implemented using different approaches for estimating the regression functions. In the present study, generalized additive models are used for this purpose. The wGCM framework proposed in \cite{Scheidegger2022} provides two variants: one that uses a fixed weight function specified a priori, and another that estimates an optimal weight function using a training dataset and applies the test on a validation dataset. In both cases, generalized additive models are used to estimate the regression functions. These variants are referred to as the wGCM.fix test and the wGCM.est test, respectively.

For the proposed method, two versions are considered: one based on the proposed resampling algorithm and another based on the CRT framework. These are referred to as the AUG test and the AUG.CRT test, respectively. While the theoretical results for the test hold for any fixed $\sigma^2>0$, practical performance depends on its suitable choice. Here it is recommended to use $\sigma^2 = \frac{1}{d}$, where $d$ is the dimension of the vector $(\bm X,\bm Y,\bm Z)$, which balances sensitivity to local and global dependence structures. Alternatively, $\sigma^2$ may be selected via cross-validation on a held-out portion of the data or via the median heuristic approach \citep{gretton2009fast}, though both approaches incur additional computational cost. A systematic study of the optimal choice of $\sigma^2$ is beyond the scope of this article and is left for future work.

Throughout the remainder of the article, all tests are conducted at the $5\%$ nominal level. For the resampling-based procedures, the critical values are computed using $500$ resampling iterations. Each experiment is repeated $1000$ times, and the empirical power is estimated as the proportion of replications in which $H_0$ is rejected. First, consider an example involving unidimensional observations in a simple regression framework.
\begin{example}
\label{exa:ex1}
Let $X$, $Y$, and $Z$ be univariate random variables such that $Z\sim F$, $X=Z+\varepsilon_1$, and $Y=rX+Z+\varepsilon_2$, where $\varepsilon_1,\varepsilon_2\stackrel{i.i.d.}{\sim}F$ are independent of $Z$, $r\in\mathbb{R}$, and $F$ is chosen to be (a) the standard normal distribution, (b) the standard Student's $t$-distribution with $4$ degrees of freedom, or (c) the standard Cauchy distribution. It is easy to verify that $X\indpt Y\mid Z$ when $r=0$, whereas $X\not\indpt Y\mid Z$ otherwise. Moreover, the strength of the conditional dependence between $X$ and $Y$ given $Z$ increases as $|r|$ increases.

The power of the tests is evaluated for different values of $r$ with sample size fixed at $n=50$. The results are reported in Figure \ref{fig:power-1}. It can be observed that several competing tests are sensitive to the presence of outliers in the data. Note that Examples \ref{exa:ex1}(a)–\ref{exa:ex1}(c) represent distributions with progressively heavier tails. This effect is reflected in the power of the competing tests.
\end{example}


\begin{figure}[h!]
\centering
    
\begin{tikzpicture}[scale = 1.05]
\begin{axis}[xmin = -2, xmax = 2, ymin = 0, ymax = 1, xlabel = {$r$}, ylabel = {Estimates}, title = {\bf Example 1 (a)},
 grid=major,
    major grid style={gray!30},
    tick style={black},
    axis line style={black},
    mark size=1.2pt
    ]
\addplot[color = red,   mark = *, step = 1cm,very thin]coordinates{(-2,0.983)(-1.5,0.865)(-1.2,0.636)(-0.9,0.334)(-0.6,0.125)(-0.3,0.056)(0,0.041)(0.3,0.060)(0.6,0.118)(0.9,0.257)(1.2,0.478)(1.5,0.695)(2,0.884)};

\addplot[color = blue,   mark = diamond*, step = 1cm,very thin]coordinates{(-2,0.930)(-1.5,0.826)(-1.2, 0.666)(-.9, 0.433)(-.6, 0.165)(-.3, 0.067)(0, 0.036)(.3, 0.071)(.6, 0.166)(.9,  0.375)(1.2,  0.584)(1.5,  0.716)(2,  0.850)};

\addplot[color = purple, mark = square*, step = 1cm, very thin]coordinates{(-2,1.000)(-1.5,1.000)(-1.2, 1.000)(-.9, 0.955)(-.6, 0.563)(-.3, 0.045)(0, 0.058)(.3, 0.472)(.6, 0.913)(.9,  0.995)(1.2,  0.999)(1.5,  1.000)(2,  1.000)};

\addplot[color = magenta, mark = diamond*, step = 1cm,very thin]coordinates{(-2,0.994) (-1.5,0.940) (-1.2,0.782) (-.9,0.508) (-.6,0.212) (-.3,0.064) (0,0.040) (.3,0.091) (.6,0.254)  (.9,0.536)  (1.2,0.799)  (1.5,0.937)  (2,0.996)};

\addplot[color = ForestGreen, mark = x, step = 1cm,very thin]coordinates{(-2,1.000) (-1.5,1.000) (-1.2,1.000) (-.9,0.999) (-.6,0.961) (-.3,0.527) (0,0.064) (0.3,0.489) (0.6,0.953)  (0.9,0.998)  (1.2,1.000)  (1.5,1.000)  (2,1.000)};

\addplot[color = black, mark = star, step = 1cm,very thin]coordinates{(-2,1.000) (-1.5,1.000) (-1.2,1.000) (-.9,0.988) (-.6,0.863) (-.3,0.354) (0,0.062) (0.3,0.315) (.6,0.853)  (.9,0.991)  (1.2,0.998)  (1.5,0.999)  (2,1.000)};

\addplot[color = blue, mark = triangle*, step = 1cm,very thin]coordinates{(-2,0.992) (-1.5,0.985) (-1.2,0.972) (-.9,0.923) (-.6,0.739) (-.3,0.306) (0,0.062) (0.3,0.293) (.6,0.695)  (.9,0.921)  (1.2,0.970)  (1.5,0.983)  (2,0.985)};

\end{axis}
\end{tikzpicture}
\begin{tikzpicture}[scale = 1.05]
\begin{axis}[xmin = -2, xmax = 2, ymin = 0, ymax = 1, xlabel = {$r$}, ylabel = {Estimates}, title = {\bf Example 1 (b)},
 grid=major,
    major grid style={gray!30},
    tick style={black},
    axis line style={black},
    mark size=1.2pt
    ]
\addplot[color = red,   mark = *, step = 1cm,very thin]coordinates{(-2,0.961) (-1.5,0.841) (-1.2,0.612) (-.9,0.321) (-.6,0.138) (-.3,0.063) (0,0.041) (0.3,0.060) (.6,0.139)  (.9,0.333)  (1.2,0.566)  (1.5,0.770)  (2,0.916)};

\addplot[color = blue,   mark = diamond*, step = 1cm,very thin]coordinates{(-2,0.899) (-1.5,0.778) (-1.2,0.610) (-.9,0.387) (-.6,0.173) (-.3,0.080) (0,0.048) (.3,0.078) (.6,0.208)  (.9,0.446)  (1.2,0.626)  (1.5,0.750) (2,0.854)};

\addplot[color = purple, mark = square*, step = 1cm, very thin]coordinates{(-2,1.000) (-1.5,1.000) (-1.2,0.992) (-.9,0.940) (-.6,0.652) (-.3,0.130) (0,0.061) (.3,0.504) (.6,0.907)  (.9,0.981)  (1.2,0.998)  (1.5,1.000)  (2,1.000)};

\addplot[color = magenta, mark = diamond*, step = 1cm,very thin]coordinates{(-2,0.986) (-1.5,0.931) (-1.2,0.793) (-.9,0.539) (-.6,0.233) (-.3,0.076) (0,0.045) (.3,0.126) (.6,0.323)  (.9,0.589)  (1.2,0.822)  (1.5,0.932)  (2,0.985)};

\addplot[color = ForestGreen, mark = x, step = 1cm,very thin]coordinates{(-2,0.957) (-1.5,0.957) (-1.2,0.955) (-.9,0.948) (-.6,0.859) (-.3,0.470) (0,0.067) (.3,0.461) (.6,0.843)  (.9,0.929)  (1.2,0.955)  (1.5,0.959)  (2,0.962)};

\addplot[color = black, mark = star, step = 1cm,very thin]coordinates{(-2,0.910) (-1.5,0.899) (-1.2,0.884) (-.9,0.827) (-.6,0.674) (-.3,0.272) (0,0.060) (.3,0.275) (.6,0.657)  (.9,0.833)  (1.2,0.878)  (1.5,0.893)  (2,0.902)};

\addplot[color = blue, mark = triangle*, step = 1cm,very thin]coordinates{(-2,0.924) (-1.5,0.911) (-1.2,0.889) (-.9,0.804) (-.6,0.637) (-.3,0.294) (0,0.078) (.3,0.285) (.6,0.660)  (.9,0.818)  (1.2,0.890)  (1.5,0.909)  (2,0.934)};

\end{axis}
\end{tikzpicture}
\begin{tikzpicture}[scale = 1.05]
\begin{axis}[xmin = -2, xmax = 2, ymin = 0, ymax = 1, xlabel = {$r$}, ylabel = {Estimates}, title = {\bf Example 1 (c)},
 grid=major,
    major grid style={gray!30},
    tick style={black},
    axis line style={black},
    mark size=1.2pt
    ]
\addplot[color = red,   mark = *, step = 1cm,very thin]coordinates{(-2,0.868) (-1.5,0.731) (-1.2,0.560) (-.9,0.343) (-.6,0.162) (-.3,0.073) (0,0.048) (0.3,0.087) (0.6,0.230) (0.9, 0.449)  (1.2,0.631)  (1.5,0.736)  (2,0.864)};

\addplot[color = blue,   mark = diamond*, step = 1cm,very thin]coordinates{(-2,0.785) (-1.5,0.684) (-1.2,0.565) (-.9,0.404) (-.6,0.204) (-.3,0.073) (0,0.041) (0.3,0.105) (0.6,0.293)  (.9,0.484)  (1.2,0.616)  (1.5,0.693)  (2,0.754)};

\addplot[color = purple, mark = square*, step = 1cm, very thin]coordinates{(-2,0.533) (-1.5,0.494) (-1.2,0.456) (-.9,0.416) (-.6,0.356) (-.3,0.279) (0,0.044) (.3,0.280) (.6,0.355)  (.9,0.427)  (1.2,0.461)  (1.5,0.488)  (2,0.535)};

\addplot[color = magenta, mark = diamond*, step = 1cm,very thin]coordinates{(-2,0.937) (-1.5,0.874) (-1.2,0.777) (-.9,0.534) (-.6,0.331) (-.3,0.154) (0,0.051) (.3,0.308) (.6,0.575)  (.9,0.734)  (1.2,0.826)  (1.5,0.892)  (2,0.949)};

\addplot[color = ForestGreen, mark = x, step = 1cm,very thin]coordinates{(-2,0.264) (-1.5,0.252) (-1.2,0.239) (-.9,0.217) (-.6,0.183) (-.3,0.123) (0,0.016) (.3,0.131) (.6,0.207)  (.9,0.239)  (1.2,0.255)  (1.5,0.267)  (2,0.271)};

\addplot[color = black, mark = star, step = 1cm,very thin]coordinates{(-2,0.145) (-1.5,0.135) (-1.2,0.119) (-.9,0.094) (-0.6,0.072) (-.3,0.043) (0,0.006) (.3,0.036) (.6,0.078)  (.9,0.101)  (1.2,0.113)  (1.5,0.124)  (2,0.150)};

\addplot[color = blue, mark = triangle*, step = 1cm,very thin]coordinates{(-2,0.389) (-1.5,0.369) (-1.2,0.348) (-.9,0.320) (-.6,0.272) (-.3,0.167) (0,0.031) (.3,0.153) (0.6,0.253)  (.9,0.321)  (1.2,0.350)  (1.5,0.370)  (2,0.402)};

\end{axis}
\end{tikzpicture}
\caption{Powers of the AUG test (\textcolor{red}{$\tikzcircle{2pt}$}), the AUG.CRT test (\textcolor{blue}{$\blacklozenge$}), the cDC.CRT test (\textcolor{purple}{$\blacksquare$}), the AC test (\textcolor{magenta}{$\blacklozenge$}), the GCM test (\textcolor{ForestGreen}{$\times$}), the GCM.fix test (\textcolor{blue}{$\blacktriangle$}) and the wGCM.est test (\textcolor{black}{$\star$}) in Examples 1 (a)-(c).}
    \label{fig:power-1}
\end{figure}

\begin{example}\label{exa:ex2}
   Here, a bivariate noise distributions with geometric dependence structures is considered. Specifically, suppose $Z\sim \text{Unif}(0,1)$, $X = Z + \eta_1$, and $Y = Z + \eta_2$, where $(\eta_1,\eta_2)$ is generated from an equal mixture of two bivariate normal distributions with mean zero and variance–covariance matrices $\bm \Sigma_1$ and $\bm \Sigma_2$, respectively. The power of the tests is computed for the following choices:
\[
\text{(a)}\;
\bm \Sigma_1 =
\begin{pmatrix}
1 & 0\\
0 & 1
\end{pmatrix},
\quad
\bm \Sigma_2 =
\begin{pmatrix}
10 & 0\\
0 & 10
\end{pmatrix},
\qquad
\text{(b)}\;
\bm \Sigma_1 =
\begin{pmatrix}
1 & 0\\
0 & 10
\end{pmatrix},
\quad
\bm \Sigma_2 =
\begin{pmatrix}
10 & 0\\
0 & 1
\end{pmatrix}.
\]

Figure \ref{fig:power-2} reports the power of the tests for varying sample sizes. In this setting, regression-based tests perform poorly because the covariance between $\eta_1$ and $\eta_2$ is zero. In Example \ref{exa:ex2}(a), the AUG and AUG.CRT tests and the cDC.CRT test exhibit comparable performance. However, in Example \ref{exa:ex2}(b), the AUG and AUG.CRT tests significantly outperform their competitors.

Note that in case (b) the correlation between the pairwise distances of $\eta_1$ and $\eta_2$ is negative, a setting in which distance-correlation-based methods are known to perform poorly. For example, in the context of independence testing, \citet{sarkar2018} observed that when the correlation between pairwise distances is negative, the distance correlation test may have low power even when the sample size is large. A similar phenomenon was also reported in the conditional independence setting by \citet{banerjee2024ball}.
\end{example}

\begin{figure}[h!]
\centering

\begin{tikzpicture}[scale = 1.05]
\begin{axis}[xmin = 10, xmax = 100, ymin = 0, ymax = 1, xlabel = {Sample Size}, ylabel = {Estimates}, title = {\bf Example 2 (a)},
 grid=major,
    major grid style={gray!30},
    tick style={black},
    axis line style={black},
    mark size=1.2pt
    ]
\addplot[color = red,   mark = *, step = 1cm,very thin]coordinates{(10,0.090) (20,0.219) (30,0.357) (40,0.476) (50,0.583) (60,0.673) (70,0.746) (80,0.814) (90,0.879)  (100,0.881)};

\addplot[color = blue,   mark = diamond*, step = 1cm,very thin]coordinates{(10,0.142) (20,0.308) (30,0.434) (40,0.562) (50,0.651) (60,0.723) (70,0.776) (80,0.804) (90,0.856)  (100,0.868)};

\addplot[color = purple, mark = square*, step = 1cm, very thin]coordinates{(10,0.167) (20,0.298) (30,0.424) (40,0.562) (50,0.676) (60,0.744) (70,0.828) (80,0.898) (90,0.915)  (100,0.948)};

\addplot[color = magenta, mark = diamond*, step = 1cm,very thin]coordinates{(10,0.095) (20,0.108) (30,0.118) (40,0.144) (50,0.158) (60,0.151) (70,0.173) (80,0.185) (90,0.185)  (100,0.184)};

\addplot[color = ForestGreen, mark = x, step = 1cm,very thin]coordinates{(10,0.132) (20,0.083) (30,0.067) (40,0.064) (50,0.055) (60,0.052) (70,0.057) (80,0.055) (90,0.054)  (100,0.055)};

\addplot[color = black, mark = star, step = 1cm,very thin]coordinates{(10,0.051) (20,0.061) (30,0.055) (40,0.047) (50,0.064) (60,0.058) (70,0.052) (80,0.052) (90,0.051)  (100,0.038)};

\addplot[color = blue, mark = triangle*, step = 1cm,very thin]coordinates{(10,0.191) (20,0.096) (30,0.086) (40,0.069) (50,0.072) (60,0.071) (70,0.061) (80,0.064) (90,0.063)  (100,0.048)};

\end{axis}
\end{tikzpicture}
\begin{tikzpicture}[scale = 1.05]
\begin{axis}[xmin = 10, xmax = 100, ymin = 0, ymax = 1, xlabel = {Sample Size}, ylabel = {Estimates}, title = {\bf Example 2 (b)},
 grid=major,
    major grid style={gray!30},
    tick style={black},
    axis line style={black},
    mark size=1.2pt
    ]

\addplot[color = red,   mark = *, step = 1cm,very thin]coordinates{(10,0.096) (20,0.210) (30,0.314) (40,0.417) (50,0.534) (60,0.612) (70,0.669) (80,0.742) (90,0.788)  (100,0.856)};

\addplot[color = blue,   mark = diamond*, step = 1cm,very thin]coordinates{(10,0.137) (20,0.270) (30,0.381) (40,0.504) (50,0.607) (60,0.681) (70,0.742) (80,0.805) (90,0.827)  (100,0.869)};

\addplot[color = purple, mark = square*, step = 1cm, very thin]coordinates{(10,0.005) (20,0.006)(30, 0.006)(40, 0.025)(50, 0.051)(60, 0.087)(70, 0.177)(80, 0.290) (90,0.450)  (100,0.586)};

\addplot[color = magenta, mark = diamond*, step = 1cm,very thin]coordinates{(10,0.059)(20, 0.094)(30, 0.100)(40, 0.103)(50, 0.134)(60, 0.118)(70, 0.160)(80, 0.143)(90, 0.150)(100,  0.154)};

\addplot[color = ForestGreen, mark = x, step = 1cm,very thin]coordinates{(10,0.066)(20, 0.046)(30, 0.032)(40, 0.028)(50, 0.033)(60, 0.031)(70, 0.028)(80, 0.031)(90, 0.033)(100,  0.030)};

\addplot[color = black, mark = star, step = 1cm,very thin]coordinates{(10,0.041)(20, 0.030)(30, 0.024)(40, 0.029)(50, 0.027)(60, 0.029)(70, 0.020)(80, 0.023)(90, 0.028)(100,  0.027)};

\addplot[color = blue, mark = triangle*, step = 1cm,very thin]coordinates{(10,0.152)(20, 0.073)(30, 0.038)(40, 0.063)(50, 0.051)(60, 0.057)(70, 0.039)(80, 0.053)(90, 0.051)(100,  0.040)};

\end{axis}
\end{tikzpicture}

\caption{Powers of the AUG test (\textcolor{red}{$\tikzcircle{2pt}$}), the AUG.CRT test (\textcolor{blue}{$\blacklozenge$}), the cDC.CRT test (\textcolor{purple}{$\blacksquare$}), the AC test (\textcolor{magenta}{$\blacklozenge$}), the GCM test (\textcolor{ForestGreen}{$\times$}), the GCM.fix test (\textcolor{blue}{$\blacktriangle$}) and the wGCM.est test (\textcolor{black}{$\star$}) in Examples 2 (a)-(b).}
 
    \label{fig:power-2}
\end{figure}

Now a high-dimensional alternative of Example \ref{exa:ex2} is considered. In this setting, $X$ and $Y$ are univariate, while the dimension of the confounding random vector $\bm Z$ is allowed to increase. The power of the tests is evaluated as a function of the dimension of $\bm Z$. In this scenario, the GCM and wGCM tests are not applicable, and the cDC.CRT test becomes computationally expensive. Therefore, the performance of the AUG tests is compared only with the AC test.
\begin{figure}[h!]
\centering
    
\begin{tikzpicture}[scale = 1.05]
\begin{axis}[xmin = 1, xmax = 10, ymin = 0, ymax = 1, xlabel = {$\log_2(d)$}, ylabel = {Estimates}, title = {\bf Example 3 (a)},
 grid=major,
    major grid style={gray!30},
    tick style={black},
    axis line style={black},
    mark size=1.2pt
    ]

\addplot[color = red,   mark = *, step = 1cm, very thin]coordinates{(1,0.557)(2,0.546)(3,0.445)(4,0.328)(5,0.207)(6,0.095)(7,0.061)(8,0.052)(9,0.048)(10,0.044)};

\addplot[color = blue,   mark = diamond*, step = 1cm,very thin]coordinates{
(1,0.609)(2,0.557)
(3,0.49)
(4,0.362)
(5,0.225)
(6,0.092)
(7,0.055)
(8,0.046)
(9,0.037)
(10,0.043)};

\addplot[color = purple, mark = diamond*, step = 1cm, very thin]coordinates{(1,0.162)
(2,0.142)
(3,0.139)
(4,0.132)
(5,0.134)
(6,0.141)
(7,0.138)
(8,0.123)
(9,0.115)
(10,0.105)
};

\end{axis}
\end{tikzpicture}
\begin{tikzpicture}[scale = 1.05]
\begin{axis}[xmin = 1, xmax = 10, ymin = 0, ymax = 1, xlabel = {$\log_2(d)$}, ylabel = {Estimates}, title = {\bf Example 3 (b)},
 grid=major,
    major grid style={gray!30},
    tick style={black},
    axis line style={black},
    mark size=1.2pt
    ]
\addplot[color = red,   mark = *, step = 1cm,very thin]coordinates{(1,0.488)
(2,0.439)
(3,0.361)
(4,0.241)
(5,0.145)
(6,0.104)
(7,0.063)
(8,0.053)
(9,0.048)
(10,0.048)};

\addplot[color = blue,   mark = diamond*, step = 1cm,very thin]coordinates{
(1,0.566)
(2,0.522)
(3,0.422)
(4,0.325)
(5,0.182)
(6,0.125)
(7,0.082)
(8,0.056)
(9,0.052)
(10,0.041)
};

\addplot[color = magenta, mark = diamond*, step = 1cm, very thin]coordinates{
(1,0.132)
(2,0.121)
(3,0.099)
(4,0.111)
(5,0.095)
(6,0.079)
(7,0.078)
(8,0.066)
(9,0.06)
(10,0.053)};

\end{axis}
\end{tikzpicture}
\caption{Powers of the AUG test (\textcolor{red}{$\tikzcircle{2pt}$}), the AUG.CRT test (\textcolor{blue}{$\blacklozenge$}) and the AC test (\textcolor{magenta}{$\blacklozenge$}) in Examples 3 (a) and (b).}
    \label{fig:power-3}
\end{figure}

\begin{example}\label{exa:ex3}
 Let $X = f(\bm Z)+\eta_1$ and $Y = f(\bm Z)+\eta_2$, where $\eta_1$ and $\eta_2$ are i.i.d. noise variables generated as in Example \ref{exa:ex2}, and $\bm Z$ is sampled independently from a $d$-dimensional standard normal distribution. Here,
\[
f(\bm u)=\frac{\sum_{i=1}^{d}|u_i|}{\sqrt{\sum_{i=1}^{d}u_i^2}}.
\]

The power of the tests is evaluated for different choices of the dimension $d$ ($2^i$, $i=1,2,\ldots$) while the sample size is fixed at $n=50$. The results are reported in Figure \ref{fig:power-3}. It can be observed that the tests exhibit moderate power in lower dimensions. However, as the dimension increases, the power of the tests drops below the nominal level $\alpha$.

Note that these procedures are distance-based tests. In high-dimensional settings, pairwise distances tend to concentrate around certain values. In this example, the concentration of the pairwise distances is determined primarily by the distribution of $\bm Z$. Consequently, as the dimension increases, the information about the conditional dependence contained in the pairwise distances diminishes, leading to the decline in power observed in Figure \ref{fig:power-3}.

According to Proposition \ref{thm:high-dim-consistency}, this limitation can be alleviated if the sample size increases with the dimension at an appropriate rate. This phenomenon is explored in the following example.
\end{example}

\begin{figure}[h!]
\centering
    
\begin{tikzpicture}[scale = 1.05]
\begin{axis}[xmin = 1, xmax = 5, ymin = 0, ymax = 1, xlabel = {$\log_2(d)$}, ylabel = {Estimates}, title = {\bf Example 4 (a)},
 grid=major,
    major grid style={gray!30},
    tick style={black},
    axis line style={black},
    mark size=1.2pt
    ]

\addplot[color = red,   mark = *, step = 1cm, very thin]coordinates{(1,0.242)
(2,0.4)
(3,0.719)
(4,0.998)
(5,1)};

\addplot[color = blue,   mark = diamond*, step = 1cm,very thin]coordinates{
(1,0.285)
(2,0.455)
(3,0.733)
(4,0.962)
(5,1)};

\addplot[color = purple, mark = diamond*, step = 1cm, very thin]coordinates{
(1,0.103)
(2,0.126)
(3,0.154)
(4,0.306)
(5,0.667)};

\end{axis}
\end{tikzpicture}
\begin{tikzpicture}[scale = 1.05]
\begin{axis}[xmin = 1, xmax = 5, ymin = 0, ymax = 1, xlabel = {$\log_2(d)$}, ylabel = {Estimates}, title = {\bf Example 4 (b)},
 grid=major,
    major grid style={gray!30},
    tick style={black},
    axis line style={black},
    mark size=1.2pt
    ]

\addplot[color = red,   mark = *, step = 1cm,very thin]coordinates{
(1,0.236)
(2,0.293)
(3,0.598)
(4,0.965)
(5,1)};

\addplot[color = blue,   mark = diamond*, step = 1cm,very thin]coordinates{
(1,0.281)
(2,0.373)
(3,0.693)
(4,0.958)
(5,1)};

\addplot[color = magenta, mark = diamond*, step = 1cm, very thin]coordinates{
(1,0.082)
(2,0.087)
(3,0.133)
(4,0.19)
(5,0.408)};

\end{axis}
\end{tikzpicture}
\caption{Powers of the AUG test (\textcolor{red}{$\tikzcircle{2pt}$}), the AUG.CRT test (\textcolor{blue}{$\blacklozenge$}) and the AC test (\textcolor{magenta}{$\blacklozenge$}) in Examples 4 (a) and (b).}
    \label{fig:power-4}
\end{figure}
\begin{example}\label{exa:ex4}
    Consider the same data-generating mechanism as in Example \ref{exa:ex3}, but now generate $n=d^2+20$ observations, where $d$ denotes the dimension of the random vector $\bm Z$. The results are reported in Figure \ref{fig:power-4}. It can be observed that the behaviour of the proposed test is consistent with the theoretical result stated in Proposition \ref{thm:high-dim-consistency}. 

Interestingly, a similar trend is observed for the AC test, suggesting that it may also exhibit high-dimensional consistency in this setting. However, the proposed test demonstrates substantially better performance compared to the AC test.   
\end{example}

\begin{example}\label{exa:ex5}
Let \(Z \sim \mathcal{N}(0,1)\) and suppose \(\bm X=(X_1,\ldots,X_d)^\top\) and \( \bm Y=(Y_1,\ldots,Y_d)^\top\)
are $d$-dimensional random vectors. Set \(X_j = Z+\varepsilon_j, j=1,\ldots,d,\) where \(\varepsilon_1,\ldots,\varepsilon_d\) are i.i.d. \(\mathcal{N}(0,1)\) random variables independent of \(Z\). Conditional on \(X\) and \(Z\), generate \(Y_k = Z+r~X_k+\eta_k, k=1,\ldots,d,\)
where \(\eta_1,\ldots,\eta_d\) are i.i.d. \(\mathcal{N}(0,1)\) random variables independent of \((X,Z)\), and \(r\in\mathbb R\).
It is easy to verify that \(\bm X \indpt \bm Y \mid Z\) when \(r=0\), whereas \(\bm X \not\indpt \bm Y \mid Z\) whenever \(r\neq 0\). Moreover, the strength of the conditional dependence between \(\bm X\) and \(\bm Y\) given \(Z\) increases as \(|r|\) increases.

Here only the of the performances of the AUG test and AUG.CRT test are evaluated for different values of \(d\) and \(r\) based on $50$ random observations. The other competing tests are not applicable in this scenario due to the high-dimensionality of $\bm X$ and $\bm Y$. Throughout the experiment, the confounding random variable \(Z\) remains one-dimensional, while both \(\bm X\) and \(\bm Y\) are allowed to be high-dimensional. The power of the test are computed by the proportion of times the tests rejected $H_0$ based on $500$ replications of the experiments. 
\end{example}

\begin{figure}[h!]
\centering
    
\begin{tikzpicture}[scale = 1.05]
\begin{axis}[xmin = 1, xmax = 5, ymin = 0, ymax = 1, xlabel = {$\log_2(d)$}, ylabel = {Estimates}, title = {(a)},
 grid=major,
    major grid style={gray!30},
    tick style={black},
    axis line style={black},
    mark size=1.2pt
    ]

\addplot[color = red,   mark = *, step = 1cm, very thin]coordinates{(1,0.046)(2,0.041)(3,0.054)(4,0.044)(5,0.054)};

\addplot[color = blue,   mark = diamond*, step = 1cm,very thin]coordinates{(1,0.042)(2,0.05)(3,0.044)(4,0.048)(5,0.049)};

\end{axis}
\end{tikzpicture}
\begin{tikzpicture}[scale = 1.05]
\begin{axis}[xmin = 1, xmax = 5, ymin = 0, ymax = 1, xlabel = {$\log_2(d)$}, ylabel = {Estimates}, title = {(b)},
 grid=major,
    major grid style={gray!30},
    tick style={black},
    axis line style={black},
    mark size=1.2pt
    ]

\addplot[color = red,   mark = *, step = 1cm, very thin]coordinates{(1,0.925)(2,0.646)(3,0.63)(4,0.619)(5,0.64)};

\addplot[color = blue,   mark = diamond*, step = 1cm,very thin]coordinates{(1,0.401)(2,0.065)(3,0.055)(4,0.043)(5,0.041)};

\end{axis}
\end{tikzpicture}
\begin{tikzpicture}[scale = 1.05]
\begin{axis}[xmin = 1, xmax = 5, ymin = 0, ymax = 1, xlabel = {$\log_2(d)$}, ylabel = {Estimates}, title = {(c)},
 grid=major,
    major grid style={gray!30},
    tick style={black},
    axis line style={black},
    mark size=1.2pt
    ]

\addplot[color = red,   mark = *, step = 1cm, very thin]coordinates{(1,0.425)(2,0.522)(3,0.761)(4,0.935)(5,0.989)};

\addplot[color = blue,   mark = diamond*, step = 1cm,very thin]coordinates{(1,0.132)(2,0.047)(3,0.051)(4,0.054)(5,0.039)};

\end{axis}
\end{tikzpicture}
\caption{Powers of the AUG test (\textcolor{red}{$\tikzcircle{2pt}$}), the AUG.CRT test (\textcolor{blue}{$\blacklozenge$}) in Examples \ref{exa:ex5} when (a) $r = 0$ and $n=50$, (b) $r=10$ and $n=50$ and (c) $r=10$ and $n = d^2+20$.}
\label{fig:power-5}
\end{figure}

In Figure \ref{fig:power-5}, the AUG test exhibited better performance than the AUG.CRT test especially when the sample size diverges to infinity along with the dimension of the data. In case (b), the effect of increasing dimentionality effected both of these tests adversely. However, with increasing sample size in case (c), the AUG test exhibited satisfactory performance. Moreover, the AUG test is computationally much faster than the AUG.CRT test. In particular, the proposed method requires generating augmented samples only once, which makes it computationally efficient in practice. These advantages, together with the guaranteed power properties of the test, make the AUG test more appealing than the AUG.CRT test from both theoretical and practical perspectives.

\section{Approximate null exchangeable pairs}
\label{sec:estimation-effect}

In practice, the conditional distribution $P_{\bm X|\bm Z}$ is rarely known and must be estimated from data. Therefore, it is important to understand whether the theoretical guarantees established in the previous sections persist when the null exchangeable pairs are generated from an estimated conditional distribution. This section investigates the effect of using approximate null exchangeable pairs on the level and power of the proposed test.

Following \cite{berrett2019}, the behaviour of the test is studied when, instead of assuming knowledge of \(P_{\bm X|\bm Z}\), an approximation \(Q_{\bm X|\bm Z}^{(n)}\) is used to generate the null exchangeable pairs. For example, \(Q_{\bm X|\bm Z}^{(n)}\) may be an estimate of \(P_{\bm X|\bm Z}\) constructed either from the same data or from an auxiliary unlabeled sample \(\{(\bm X_i^*, \bm Z_i^*)\}_{i=1}^N\) drawn from \(P_{(\bm X,\bm Z)}\). In the rest of the article, the process of generating null exchangeable pairs constructed using $P_{\bm X|\bm Z}$ and $Q_{\bm X|\bm Z}^{(n)}$ will be referred to as the exact and approximate sampling schemes, respectively. The following result establishes an upper bound on the Type~I error rate and shows that the test remains consistent provided that the total variation distance between \(\prod_{i=1}^n Q_{\bm X|\bm Z_i}^{(n)}\) and \(\prod_{i=1}^n P_{\bm X|\bm Z_i}\) converges to zero, i.e., the corresponding sequences of distributions are contiguous.

\begin{theorem}
Assume that $H_0 : \bm X \indpt \bm Y \mid \bm Z$ is true, and that the conditional distribution of $\bm X \mid \bm Z$ is given by $P_{\bm X|\bm Z}$. Let $\bm X^{(1)}$ be copies of $\bm X$ generated using an estimate $Q_{\bm X|\bm Z}^{(n)}$ of the true conditional distribution $P_{\bm X|\bm Z}$. Then, for any desired Type~I error rate $\alpha \in [0,1]$,

\[
\mathbb{P}\{p_{n,B} \le \alpha \mid (\bm Y_i, \bm Z_i)_{i=1}^n\} \le \frac{\lfloor (B+1)\alpha \rfloor}{B+1} + d_{\mathrm{TV}}\Bigg(\prod_{i=1}^n Q_{\bm X|\bm Z_i}^{(n)}, \prod_{i=1}^n P_{\bm X|\bm Z_i}\Bigg),
\]
where $d_{\mathrm{TV}}(Q_1,Q_2) = \sup_A \big|Q_1(A)-Q_2(A)\big|$, $p_{n,B}$ is as in \eqref{eq:cond-pval}, and the probability is taken with respect to the distribution of $\bm X$ and $\bm X^{(1)}$ conditional on $\bm Y,\bm Z$.
\label{thm:excess-type-I}
\end{theorem}

Note that by Theorem~\ref{thm:excess-type-I}, the proposed test will have asymptotic Type~I error rate control if

\[
d_{\mathrm{TV}}\Bigg(\prod_{i=1}^n Q_{\bm X|\bm Z_i}^{(n)}, \prod_{i=1}^n P_{\bm X|\bm Z_i}\Bigg) = o_{P}(1).
\]

It is also interesting to observe that a simple application of the dominated convergence theorem with the above assumption ensures that the joint distributions of $\{(\bm X_i, \bm X_i^\prime, \bm Y_i, \bm Z_i)\}_{i=1}^n$ and \linebreak$\{(\bm X_i, \bm X_i^{(1)}, \bm Y_i, \bm Z_i)\}_{i=1}^n$ are contiguous. The following proposition shows that contiguity is sufficient to ensure the consistency of the proposed test.

\begin{proposition}
Assume that $H_1 : \bm X \not\indpt \bm Y \mid \bm Z$ is true, and that the conditional distribution of $\bm X \mid \bm Z$ is given by $P_{\bm X|\bm Z}$. Let $\bm X^{(1)}$ be copies of $\bm X$ generated using an estimate $Q_{\bm X|\bm Z}^{(n)}$ of the true conditional distribution $P_{\bm X|\bm Z}$. Also assume that

\[
d_{\mathrm{TV}}\Bigg(\prod_{i=1}^n Q_{\bm X|\bm Z_i}^{(n)}, \prod_{i=1}^n P_{\bm X|\bm Z_i}\Bigg) \stackrel{\P}{\longrightarrow} 0,
\]
as $n \to \infty$. Then, $\hat\zeta_{n,\sigma}^{(1)}$, the statistic computed based on $\{(\bm X_i,\bm X_i^{(1)}, \bm Y_i,\bm Z_i)\}_{i=1}^n$, converges in probability to $\zeta_\sigma(P)$ (as defined in \eqref{eq:measure}, with $\bm V^\prime$ being the exact null exchangeable pair) as $n \to \infty$.
\label{prop:contiguity-consistency}
\end{proposition}

Hence, the proposed test is robust against model misspecification or when an approximation is used to generate the null exchangeable pairs, as long as the joint distributions of the augmented data under the exact sampling scheme and the approximate sampling scheme are contiguous in the sense of total variation distance. Interested readers are referred to Section~5.1 of \cite{berrett2019} for a discussion of when this assumption is satisfied.

\subsection{Empirical analysis}

The performance of the AUG test is investigated when the null exchangeable pairs are generated using an estimated conditional distribution rather than the true distribution $P_{\bm X|\bm Z}$. Throughout, the focus is on Example~\ref{exa:ex1}.
To approximate $P_{\bm X|\bm Z}$, a kernel-based nonparametric estimator is constructed from an independent training sample of size $m_n$. Specifically, let \(\{(\widetilde{\bm X}_i,\widetilde{\bm Z}_i)\}_{i=1}^{m_n}\) be an independent sample drawn from the joint distribution of $(\bm X,\bm Z)$. Using this sample, the conditional distribution of $\bm X$ given $\bm Z$ is estimated through a plug-in kernel smoothing approach, which is given by 
\begin{align*}
		\hat p_{\bm X| \bm Z=\bm z}({\bm x| \bm z}) = \frac{\hat p_{\bm X,\bm Z}({\bm x,\bm z})}{\hat p_{\bm Z}({\bm z})} & = \frac{\frac{1}{m_nh_0^{d_X}h_{2}^{\prime d_Z}}\sum_{s=1}^{m_n} K\left(\frac{ \| \bm{z} - \bm{Z}_s \| }{h_{2}'}\right) \phi\left(\frac{\bm x-\bm X_s}{h_0}\right)}{\frac{1}{m_nh_{2}^{\prime d_Z}}\sum_{s=1}^{m_n} K\left(\frac{ \| \bm{z} - \bm{Z}_s \| }{h_{2}'}\right)} \nonumber \\ 
		& = \sum_{s=1}^{m_n} \kappa_{s}({\bm z}) \frac{1}{h_0^{d_X}} \phi\left(\frac{\bm x-\bm X_s}{h_0}\right) , 
	\end{align*}
    where $\phi$ denotes the probability density function of the standard normal distribution and 
	\begin{align*}
		\kappa_{s}({\bm z}) := \frac{ K\left(\frac{ \| \bm{z} - \bm{Z}_s \| }{h_{2}'}\right)}{\sum_{j=1}^{m_n} K\left(\frac{\bm z-\bm Z_ j}{h_{2}'}\right) } , 
	\end{align*}
	for $1 \leq s \leq m_n$. The resulting estimator is denoted by $Q_{\bm X|\bm Z}^{(n)}$. The estimated conditional distribution $Q_{\bm X|\bm Z}^{(n)}$ is then used to generate approximate null exchangeable pairs $(\bm X_1^{(1)},\ldots,\bm X_n^{(1)})$ corresponding to the observations in an independent testing sample \(\{(\bm X_i,\bm Y_i,\bm Z_i)\}_{i=1}^{n}.\) Since the training and testing samples are independent, the effect of estimating $P_{X|\bm Z}$ can be studied separately from the sampling variability of the test statistic. The augmented sample \(\{(\bm X_i,\bm X_i^{(1)},\bm Y_i,\bm Z_i)\}_{i=1}^{n}\) is subsequently used to construct the AUG test.

To assess the impact of estimating the conditional distribution, the performance of the AUG test based on the approximate null exchangeable pairs generated from $\{(\bm X_i,\bm X_i^{(1)},\bm Y_i,\bm Z_i)\}_{i=1}^{n}$ is compared with that of the oracle AUG test, where the null exchangeable pairs are generated from the true conditional distribution $P_{X|\bm Z}$. Throughout the experiments, the training sample size is taken to satisfy \(m_n \asymp n^\gamma,\)
and the empirical level and power of the two procedures are evaluated for several values of $\gamma$. The experiments are carried out under both the null hypothesis ($r=0$) and the alternative hypothesis ($r=0.5,1$).

\begin{figure}[h!]
\centering
    
\begin{tikzpicture}[scale = 1.05]
\begin{axis}[xmin = 1, xmax = 3.1, ymin = 0, ymax = 1, xlabel = {$\gamma$}, ylabel = {Estimates}, title = {$r=0$},
 grid=major,
    major grid style={gray!30},
    tick style={black},
    axis line style={black},
    mark size=1.2pt
    ]

\addplot[color = red,   mark = *, step = 1cm, very thin]coordinates{(1,0.046)(1.5,0.046)(2,0.046)(2.5,0.046)(3,0.046)};

\addplot[color = magenta,   mark = *, step = 1cm, very thin]coordinates{(1,0.167)(1.5,0.074)(2,0.054)(2.5,0.048)(3,0.05)};

\end{axis}
\end{tikzpicture}
\begin{tikzpicture}[scale = 1.05]
\begin{axis}[xmin = 1, xmax = 3.1, ymin = 0, ymax = 1, xlabel = {$\gamma$}, ylabel = {Estimates}, title = {$r=0.5$},
 grid=major,
    major grid style={gray!30},
    tick style={black},
    axis line style={black},
    mark size=1.2pt]

\addplot[color = red,   mark = *, step = 1cm, very thin]coordinates{(1,0.167)(1.5,0.167)(2,0.167)(2.5,0.167)(3,0.167)};

\addplot[color = magenta,   mark = *, step = 1cm, very thin]coordinates{(1,0.555)(1.5,0.292)(2,0.191)(2.5,0.197)(3,0.186)};

\end{axis}
\end{tikzpicture}
\begin{tikzpicture}[scale = 1.05]
\begin{axis}[xmin = 1, xmax = 3.1, ymin = 0, ymax = 1, xlabel = {$\gamma$}, ylabel = {Estimates}, title = {$r=1$},
 grid=major,
    major grid style={gray!30},
    tick style={black},
    axis line style={black},
    mark size=1.2pt]

\addplot[color = red,   mark = *, step = 1cm,very thin]coordinates{(1,0.762)(1.5,0.762)(2,0.762)(2.5,0.762)(3,0.762)};

\addplot[color = magenta,   mark = *, step = 1cm, very thin]coordinates{(1,0.975)(1.5,0.902)(2,0.842)(2.5,0.814)(3,0.807)};

\end{axis}
\end{tikzpicture}
\caption{Powers of the AUG test using the exact null exchangeable pairs (\textcolor{red}{$\tikzcircle{2pt}$}) and approximate null exchangeable pairs (\textcolor{magenta}{$\tikzcircle{2pt}$}) using Example 1 for $r=0,0.5$ and $1$.}
    \label{fig:power-4}
\end{figure}

Note that as $\gamma$ increases, i.e., as the training sample size grows, the power of the AUG test based on approximate null exchangeable pairs approaches that of the AUG test with the exact null exchangeable pairs. Therefore, to ensure validity and desirable power properties when the sampling model is trained on an external dataset, one requires a sufficiently large training sample size. Otherwise, following \cite{banerjee2024ball}, the sampling scheme may be designed appropriately to guarantee asymptotic validity and consistency.

}

\section*{Acknowledgments}
The author gratefully acknowledges Anil K. Ghosh, Paromita Dubey, Samriddha Lahiry, Bhaswar B. Bhattacharya, and Somabha Mukherjee for their thoughtful comments, which helped improve the quality of this article.

\bibliographystyle{apalike}
\bibliography{refs}

\section{High Dimensional Consistency: The Gaussianity Case}

{

In this section, a simple intuition behind the distribution of null exchangeable pairs of the multivariate Gaussian random vectors is described. Then the exact expression of $\zeta_\sigma(\mathrm P)$ is derived when $\mathrm P$ is a Gaussian distribution. 

\subsection{null exchangeable pairs for Gaussian random vectors}
Suppose that the random vector $(\bm X,\bm Y,\bm Z)$ follows a multivariate normal distribution, i.e.,
\(
(\bm X,\bm Y,\bm Z) \sim \mathcal{N}(\bm \mu, \bm \Sigma),
\)
where $\bm \mu$ is the mean vector and $\bm \Sigma$ is the variance-covariance matrix. Without loss of generality, one can write the variance-covariance matrix in block form as
\[
\bm \Sigma =
\begin{pmatrix}
\bm \Sigma_{XX} & \bm \Sigma_{XY} & \bm \Sigma_{XZ} \\
\bm \Sigma_{YX} & \bm \Sigma_{YY} & \bm \Sigma_{YZ} \\
\bm \Sigma_{ZX} & \bm \Sigma_{ZY} & \bm \Sigma_{ZZ}
\end{pmatrix}.
\]
A well-known property of the multivariate normal distribution is that conditional independence is equivalent to a null partial cross-covariance matrix, i.e.,
\[
\bm X \indpt \bm Y \mid \bm Z
\quad \Longleftrightarrow \quad
\bm \Sigma_{XY|Z} = 0,
\]
where the partial cross-covariance matrix is defined as
\[
\bm \Sigma_{XY|Z}
= \E\big[(\bm X- \E[\bm X|\bm Z])(\bm Y - \E[\bm Y|\bm Z])^\top\big] = 
\bm \Sigma_{XY}
-
\bm \Sigma_{XZ}\bm \Sigma_{ZZ}^{-1}\bm \Sigma_{ZY}.
\]
Equivalently, the cross-covariance matrix can be decomposed as
\[
\bm \Sigma_{XY}
=
\bm \Sigma_{XZ}\bm \Sigma_{ZZ}^{-1}\bm \Sigma_{ZY}
+
\bm \Sigma_{XY|Z}.
\]
The first term represents the component of the covariance between $\bm X$ and $\bm Y$ that is induced through their mutual dependence on $\bm Z$, while $\bm \Sigma_{XY|Z}$ measures the residual dependence between $\bm X$ and $\bm Y$ after conditioning on $\bm Z$.


Consider now the construction of a modified random vector $(\bm X',\bm Y, \bm Z)$ that has the properties $ \bm X'| \bm Z \stackrel{D}{=} \bm X| \bm Z$ and $\bm X'\indpt \bm Y|\bm Z$. In the Gaussian setting this can be achieved by modifying the covariance structure. These conditions ensure that $(\bm X',\bm Y,\bm Z)$ will have a multivariate Gaussian distribution with mean $\mu$ and variance-covariance matrix
\[
\bm \Sigma' =
\begin{pmatrix}
\bm \Sigma_{XX} & \bm \Sigma_{XY}' & \bm \Sigma_{XZ} \\
\bm \Sigma_{YX}' & \bm \Sigma_{YY} & \bm \Sigma_{YZ} \\
\bm \Sigma_{ZX} & \bm \Sigma_{ZY} & \bm \Sigma_{ZZ}
\end{pmatrix},
\]
where the modified cross-covariance matrix is \(\bm \Sigma_{XY}' =
\bm \Sigma_{XZ}\bm \Sigma_{ZZ}^{-1}\bm \Sigma_{ZY}.
\)
The partial cross-covariance between $\bm X'$ and $\bm Y$ given $\bm Z$ is
\[
\bm \Sigma_{X'Y|Z}
=
\bm \Sigma_{XY}'
-
\bm \Sigma_{XZ}\bm \Sigma_{ZZ}^{-1}\bm \Sigma_{ZY} = 0.
\]
Therefore,
\( \bm X' \indpt \bm Y \mid \bm Z\). This shows that replacing the original cross-covariance $\bm \Sigma_{XY}$ with \(
\bm \Sigma_{XZ}\bm \Sigma_{ZZ}^{-1}\bm \Sigma_{ZY} \)
produces a Gaussian vector in which the conditional dependence between $\bm X$ and $\bm Y$ has been removed while preserving their marginal relationships with $\bm Z$.




\subsection{Expression of $\zeta_\sigma(\mathrm{P})$}

In this section, the exact expression of $\zeta_\sigma(\mathrm{P})$ is evaluated when $(\bm X,\bm Y,\bm Z)\sim\mathcal{N}(\bm \mu,\bm \Sigma)$. But before that, consider the following lemma.
\begin{lemma}
    If ${\bf X}_1\sim \mathcal{N}({\bf 0},\sigmat_1)$ and ${\bf X}_2\sim {\mathcal N}({\bf 0},\sigmat_2)$ are independent $d$-dimensional random vectors,
    $$\E\Big\{\exp\big(-\frac{\sigma^2}{2}\|{\bf X}_1-{\bf X}_2\|^2\big)\Big\} = \Big|\sigma^2(\sigmat_1+\sigmat_2)+{\bf I}_d\Big|^{-1/2},$$
    where $\big| {\bf A}\big|$ denotes the determinant of a matrix ${\bf A}$, and ${\bf I}_d$ is the $d\times d$ identity matrix. 
    \label{aux-1}
\end{lemma}

\begin{proof}[\bf Proof of Lemma \ref{aux-1}]
    Here ${\bf N} = {\bf X}_1-{\bf X}_2$ follows $\mathcal{N}( {\bf 0}, \sigmat_1+\sigmat_2)$. So, we have 
    $$\E\Big\{\exp\big(-\frac{\sigma^2}{2}\|{\bf N}\|^2\big)\Big\} = \int \frac{1}{(2\pi)^{d/2}\Big|\sigmat_1+\sigmat_2\Big|^{1/2}}\exp\Big(-\frac{\sigma^2}{2}\|{\bf u}\|^2-\frac{1}{2}{\bf u}^{\top} (\sigmat_1+\sigmat_2)^{-1} {\bf u}\Big) {\mathrm d}{\bf u}.$$

    Note that the exponent on the right side is the same as that of the density of a normal distribution with mean ${\bf 0}$ and variance-covariance matrix $\big(\sigma^2{\bf I}_d+(\sigmat_1+\sigmat_2)^{-1}\big)^{-1}$. Therefore, we have

    $$\E\Big\{\exp\big(-\frac{\sigma^2}{2}\|{\bf N}\|^2\big)\Big\} = \frac{1}{\Big|\sigmat_1+\sigmat_2\Big|^{1/2}\Big|\sigma^2{\bf I}_d+(\sigmat_1+\sigmat_2)^{-1}\Big|^{1/2}} = \Big|\sigma^2(\sigmat_1+\sigmat_2)+ {\bf I}_d\Big|^{-1/2}.$$
    This completes the proof.
\end{proof}

Using Lemma \ref{aux-1} it is easy to derive that $\zeta_\sigma\big(\mathcal{N}(\bm \mu,\bm \Sigma)\big)$ admits the closed form
\[
\zeta_\sigma\big(\mathcal{N}(\bm \mu,\bm \Sigma)\big)
=
\left|\bm I+2\sigma^2\bm \Sigma\right|^{-1/2}
+
\left|\bm I+2\sigma^2\bm \Sigma'\right|^{-1/2}
-
2
\left|\bm I+\sigma^2(\bm \Sigma+\bm \Sigma')\right|^{-1/2}.
\]
Since the two covariance matrices $\bm \Sigma$ and $\bm \Sigma'$ differ only through the block $\bm \Sigma_{XY|Z}$, $\zeta_\sigma(\mathrm{P})$ is completely determined by this partial cross-covariance matrix. In particular, if
\[
\bm \Sigma_{XY|Z}=0, \Leftrightarrow \text{ $\bm \Sigma = \bm \Sigma'$ and hence } \zeta_\sigma(\mathrm{P}) = 0.
\]

More generally, the magnitude of $\zeta_\sigma\big(\mathcal{N}(\bm \mu,\bm \Sigma)\big)$ reflects the strength of the residual dependence between $\bm X$ and $\bm Y$ after conditioning on $\bm Z$. In the Gaussian setting, this dependence is fully characterized by the matrix $\bm \Sigma_{XY|Z}$. Consequently, the behaviour of the proposed test statistic is governed by the size and structure of this partial cross-covariance matrix.

In particular, when the entries of $\bm \Sigma_{XY|Z}$ are large, the two covariance matrices $\bm \Sigma$ and $\bm \Sigma'$ differ substantially and the resulting $\zeta_\sigma\big(\mathcal{N}(\bm \mu,\bm \Sigma)\big)$ is correspondingly large. Conversely, when $\bm \Sigma_{XY|Z}$ is close to zero, the two Gaussian distributions are nearly indistinguishable and the measure becomes small. This observation provides useful intuition for the behaviour of the proposed conditional independence test in the Gaussian setting: the test essentially measures the discrepancy between the joint distribution and its conditionally independent variant, and this discrepancy is controlled by the partial cross-covariance between $\bm X$ and $\bm Y$ given $\bm Z$.

To better understand the role of the partial cross-covariance matrix, define
\[
\bm A=\bm I+2\sigma^2\bm \Sigma_2.
\]
Since \(\bm \Sigma_2\) is a covariance matrix, \(\bm A\) is positive definite and hence invertible. Therefore, using the identity
\( \det(\bm A+\bm B)=\det(\bm A)\det(\bm I+\bm A^{-1}\bm B),\) we obtain
\[
\det(\bm A+2\sigma^2\bm \Delta)
=
\det(\bm A)\det(\bm I+2\sigma^2\bm A^{-1}\bm \Delta)
\]
and
\[
\det(\bm A+\sigma^2\bm \Delta)
=
\det(\bm A)\det(\bm I+\sigma^2\bm A^{-1}\bm \Delta).
\]
Substituting these expressions above gives
\[
\zeta_\sigma(\mathrm P)
=
\frac{1}{\sqrt{\det(\bm A)}}
\Big[
\det(\bm I+2\sigma^2\bm A^{-1}\bm \Delta)^{-1/2}
+
1
-
2\det(\bm I+\sigma^2\bm A^{-1}\bm \Delta)^{-1/2}
\Big].
\]
Hence, the dependence of \(\zeta_\sigma(\mathrm P)\) on the underlying distribution is completely determined by the matrix
\[
\bm A^{-1}\bm \Delta
=
(\bm I+2\sigma^2\bm \Sigma_2)^{-1}\bm \Delta.
\]
If \(\lambda_1,\ldots,\lambda_d\) denote the eigenvalues of \(\bm A^{-1}\bm \Delta\), then
\[
\det(\bm I+t\bm A^{-1}\bm \Delta)
=
\prod_{i=1}^{d}(1+t\lambda_i),
\]
and consequently
\[
\zeta_\sigma(\mathrm P)
=
\frac{1}{\sqrt{\det(\bm A)}}
\left[
\prod_{i=1}^{d}(1+2\sigma^2\lambda_i)^{-1/2}
+
1
-
2\prod_{i=1}^{d}(1+\sigma^2\lambda_i)^{-1/2}
\right].
\]
To further understand the magnitude of \(\zeta_\sigma(\mathrm P)\), consider the regime where the conditional dependence is weak, i.e., \(\|\bm \Delta\|_F\to0\). Using the expansion
\[
\det(\bm I+t\bm M)^{-1/2}
=
1-\frac{t}{2}\operatorname{tr}(\bm M)
+
t^2
\left\{
\frac14\operatorname{tr}(\bm M^2)
+
\frac18\big(\operatorname{tr}(\bm M)\big)^2
\right\}
+
O(\|\bm M\|_F^3),
\]
with \(\bm M=\bm A^{-1}\Delta\), a straightforward calculation shows that 

{\[
\zeta_\sigma(\mathrm P)
=
\frac{\sigma^4}
{4\sqrt{\det(\bm A)}}
\left\{
2\operatorname{tr}\left[(\bm A^{-1}\bm \Delta)^2\right]
+
\big(\operatorname{tr}(\bm A^{-1}\bm \Delta)\big)^2
\right\}
+
O(\|\bm \Delta\|_F^3).
\]}
Since
\(
\big(\operatorname{tr}(\bm A^{-1}\bm \Delta)\big)^2\ge 0,
\)
it follows that
\[
\zeta_\sigma(\mathrm P)
\ge
\frac{\sigma^4}
{2\sqrt{\det(\bm A)}}
\operatorname{tr}(\bm A^{-1}\bm \Delta \bm A^{-1}\bm \Delta)
+
O(\|\bm \Delta\|_F^3).
\]
Let
\(
\bm A=\bm U\bm \Lambda\bm U^\top,
\)
where
\(
\bm \Lambda=\operatorname{diag}(\mu_1,\ldots,\mu_d).
\)
Writing
\(
\widetilde{\bm \Delta}=\bm U^\top \Delta \bm U,
\)
we obtain
\[
\operatorname{tr}(\bm A^{-1}\bm \Delta \bm A^{-1}\bm \Delta)
=
\operatorname{tr}
\left(
\Lambda^{-1}\widetilde{\bm \Delta}
\Lambda^{-1}\widetilde{\bm \Delta}
\right)
=
\sum_{i,j}
\frac{\widetilde{\bm \Delta}_{ij}^2}
{\mu_i\mu_j}.
\]
Therefore,
\[
\operatorname{tr}(\bm A^{-1}\bm \Delta \bm A^{-1}\bm \Delta)
\ge
\frac{1}{\lambda_{\max}(\bm A)^2}
\sum_{i,j}
\widetilde{\bm \Delta}_{ij}^2.
\]

Since orthogonal transformations preserve the Frobenius norm,
\[
\sum_{i,j}
\widetilde{\bm \Delta}_{ij}^2
= \|\widetilde{\bm \Delta}\|_F^2
 = \|\bm \Delta\|_F^2.
\]
Consequently,
\[
\operatorname{tr}(\bm A^{-1}\bm \Delta \bm A^{-1}\bm \Delta)
\ge
\frac{\|\bm \Delta\|_F^2}
{\lambda_{\max}(\bm A)^2}.
\]

Substituting the above inequality into the expansion of
\(\zeta_\sigma(\mathrm P)\) yields
\[
\zeta_\sigma(\mathrm P)
\ge
\frac{\sigma^4}
{2\sqrt{\det(\bm A)}}
\frac{\|\bm \Delta\|_F^2}{\lambda_{\max}(\bm A)^2}
+
O(\|\bm \Delta\|_F^3).
\]

Since $\|\bm \Delta\|_F\rightarrow 0$, the cubic remainder is dominated by the quadratic term, and therefore 
\[
\zeta_\sigma(\mathrm P)
\gtrsim
\|\bm \Delta\|_F^2.
\]
Also, by definition, we have
\(
\|\bm \Delta\|_F^2
 =
2\|\bm \Sigma_{XY|Z}\|_F^2.
\)
Therefore,
\[
\zeta_\sigma(\mathrm P)
\gtrsim
\|\bm \Sigma_{XY|Z}\|_F^2
 \]
whenever \(\|\bm \Sigma_{XY|Z}\|_F\to0\) and the eigenvalues of \(\bm I+2\sigma^2\bm \Sigma_2\) remain uniformly bounded. 
Then, under Gaussian weak-signal regime, the condition of Proposition~\ref{thm:high-dim-consistency} is satisfied whenever
\[
n\|\bm \Sigma_{XY|Z}\|_F^2\to\infty,
\]
and the proposed test achieves high-dimensional consistency.

}

\section{Proofs}
\vspace{0.1in}
\begin{proof}[\bf Proof of Proposition \ref{thm:characterization}:]
    It is easy to see that if $(\bm X, \bm Y, \bm Z) \sim {\Pr}$ is such that $\bm X\indpt \bm Y|\bm Z$, then $\varphi_1=\varphi_2$ and hence $\zeta_W(\Pr)=0$. So, let us prove the only if part. Recall that $\zeta_W(\Pr)$ is given by
    $$\zeta_W(\Pr) = \int \Big|\varphi_1(\bm t,\bm u, \bm v)-\varphi_2(\bm t,\bm u, \bm v)\Big|^2 W(\bm t, \bm u, \bm v) \mathrm d \bm t \mathrm d \bm u \mathrm d \bm v.$$
    Since $W(\cdot, \cdot, \cdot)$ is a non-negative measure, the non-negativity of $\zeta_W(\Pr)$ follows from the non-negativity of the integrand. Now $\zeta_W(\Pr) = 0$ implies $\varphi_1(\bm t,\bm u, \bm v) = \varphi_2(\bm t,\bm u, \bm v)$ over the set $\{(\bm t,\bm u, \bm v)\mid W(\bm t,\bm u, \bm v)>0\}$. Since $W(\cdot,\cdot,\cdot)$ is supported on the entire space, $\varphi_1(\bm t,\bm u, \bm v)=\varphi_2(\bm t,\bm u, \bm v)$ almost everywhere. Since, both $\varphi_1$ and $\varphi_2$ are continuous functions, $\zeta_W(\Pr)=0$ implies
    $\varphi_1(\bm t,\bm u, \bm v)=\varphi_2(\bm t,\bm u, \bm v)$ for all $(\bm t,\bm u, \bm v) \in \R^{d}$, where $d = d_X+d_Y+d_Z$. Hence, when $\zeta_\sigma(\mathrm{P}) = 0$, $(\bm X, \bm Y, \bm Z)$ and $(\bm X^\prime, \bm Y, \bm Z)$ are identically distributed, i.e., $\bm X\indpt \bm Y|\bm Z$. This completes the proof.   
\end{proof}

\begin{proof}[\bf Proof of Proposition \ref{thm:alt-representation}:]
    Note that for any $\bm v_1, \bm v_2$
    \begin{align}
        \int \exp\big\{i\langle {\bf s}, {\bm v}_1-{\bm v}_2\rangle\big\} \frac{1}{(2\pi\sigma^2)^{d/2}} e^{-{\|{\bf s}\|^2}/{2\sigma^2}} \mathrm d{\bf s} = \E\Big\{\exp\big\{i\langle {\bf S}, {\bm v}_1-{\bm v}_2\rangle\big\}\Big\},
        \label{characteristic-representation-v1}
    \end{align}
    where ${\bf S}$ follows a $d$-variate Gaussian distribution with mean zero and variance-covariance matrix $\sigma^2{\bf I}_d$. The right side of equation (\ref{characteristic-representation}) is the characteristic function of ${\bf S}$ evaluated at ${\bm v}_1-{\bm v}_2$. Therefore,
    \begin{align}
        \int \exp\big\{i\langle {\bf s}, {\bm v}_1-{\bm v}_2\rangle\big\} \frac{1}{(2\pi\sigma^2)^{d/2}} e^{-{\|{\bf s}\|^2}/{2\sigma^2}} \mathrm d{\bf s} = \exp\left\{-\frac{\sigma^2}{2}\|{\bm v}_1-{\bm v}_2\|^2\right\}.
        \label{characteristic-representation}
    \end{align}
    
    Now, note that
    $$\Big|\varphi_1(\bm t,\bm u, \bm v)-\varphi_2(\bm t,\bm u, \bm v)\Big|^2 = \Big(\varphi_1(\bm t,\bm u, \bm v)-\varphi_2(\bm t,\bm u, \bm v)\Big)\Big(\varphi_1(-\bm t,-\bm u, -\bm v)-\varphi_2(-\bm t,-\bm u,- \bm v)\Big).$$
    So, expanding the characteristic functions in terms of expectations, one can get 
    \begin{align}
        |\varphi_1(\bm t,\bm u, \bm v)-\varphi_2(\bm t,\bm u, \bm v)|^2 = & \E\Big\{\exp\big\{i\langle {\bf s}, \bm V_1-\bm V_2 \rangle\big\}\Big\}+ \E\Big\{\exp\big\{i\langle {\bf s}, \bm V^{'}_1-\bm V^{'}_2 \rangle\big\}\Big\} \nonumber \\ 
        &-\E\Big\{\exp\big\{i\langle {\bf s}, \bm V_1-\bm V^{'}_2 \rangle\big\}\Big\}-\E\Big\{\exp\big\{i\langle {\bf s}, \bm V_1^{'}-\bm V_2 \rangle\big\}\Big\},
        \label{eq:try}
    \end{align}
    where ${\bf s} = (\bm t, \bm u, \bm v)$. Using \eqref{characteristic-representation}, for any ${\bf V}_1$ and ${\bf V}_2$, one has
    $$ \int \E\Big\{\exp\big\{i\langle {\bf t}, {\bf V}_1-{\bf V}_2 \rangle\big\}\Big\} \frac{1}{(2\pi\sigma^2)^{d/2}} e^{-{\|{\bf t}\|^2}/{2\sigma^2}} {\mathrm d}{\bf t}=
    \E \Big\{\exp\big\{-\frac{\sigma^2}{2}\|{\bf V}_1-{\bf V}_2\|^2\big\}\Big\}.$$
    Applying this to all four terms in (\ref{eq:try}), the result follows.
\end{proof}

\begin{proof}[\bf Proof of Proposition \ref{thm:concentration}:]
    As introduced in Section 2.3 one can write the estimator as
    $$\hat\zeta_{n,\sigma} = \frac{2}{n(n-1)}\sum_{1\leq i< j\leq n} g\big((\bm X_i, \bm X_i^\prime, \bm Y_i, \bm Z_i),(\bm X_j, \bm X_j^\prime, \bm Y_j, \bm Z_j)\big),$$
    where
    \begin{align*}
        & g\big((\bm x_1, \bm x_1^\prime, \bm y_1, \bm z_1),(\bm x_2, \bm x_2^\prime, \bm y_2, \bm z_2)\big)\\
        & = \exp\bigg\{-\frac{\sigma^2\|\bm v_1-\bm v_2\|^2}{2}\bigg\}+\exp\bigg\{-\frac{\sigma^2\|\bm v^{'}_1-\bm v^{'}_2\|^2}{2}\bigg\} \\
        & \hspace{0.75in}-\exp\bigg\{-\frac{\sigma^2\|\bm v_1-\bm v^{'}_2\|^2}{2}\bigg\}-\exp\bigg\{-\frac{\sigma^2\|\bm v^{'}_2-\bm v_1\|^2}{2}\bigg\},
    \end{align*}
    with $\bm v_i = (\bm x_i, \bm y_i, \bm z_i)$ and $\bm v^{'}_i = (\bm x_i^\prime, \bm y_i, \bm z_i)$ for $i = 1,2$. Now let $\hat\zeta_{n,\sigma}^{(i)}$ denote our estimator when the $i^{th}$ observation $(\bm X_i, \bm X_i^\prime, \bm Y_i, \bm Z_i)$ is replaced by an independent copy $(\Tilde{\bm X}_i, \Tilde{\bm X}_i^\prime, \Tilde{\bm Y}_i, \Tilde{\bm Z}_i)$ of the same. Note that
    \begin{align*}
         &|\hat\zeta_{n,\sigma}-\hat\zeta_{n,\sigma}^{(i)}|\\
         & \leq \frac{2}{n(n-1)}\Bigg\{\sum_{j=1}^{i-1} \Big|g\big((\bm X_j, \bm X_j^\prime, \bm Y_j, \bm Z_j),(\bm X_i, \bm X_i^\prime, \bm Y_i, \bm Z_i)\big)-g\big((\bm X_j, \bm X_j^\prime, \bm Y_j, \bm Z_j),(\Tilde{\bm X}_i, \Tilde{\bm X}_i^\prime, \Tilde{\bm Y}_i, \Tilde{\bm Z}_i)\big)\Big|\\ & ~~~~~~~~~+ \sum_{j=i+1}^n \Big|g\big((\bm X_i, \bm X_i^\prime, \bm Y_i, \bm Z_i),(\bm X_j, \bm X_j^\prime, \bm Y_j, \bm Z_j)\big)-g\big((\Tilde{\bm X}_i, \Tilde{\bm X}_i^\prime, \Tilde{\bm Y}_i, \Tilde{\bm Z}_i), (\bm X_j, \bm X_j^\prime, \bm Y_j, \bm Z_j)\big)\Big|\Bigg\} \\
    \end{align*}
    Since $|g(\cdot,\cdot)|\leq 2$, this implies 
        $|\hat\zeta_{n,\sigma}-\hat\zeta_{n,\sigma}^{(i)}| \leq \frac{8(n-1)}{n(n-1)} \leq \frac{8}{n}$. So, applying bounded difference inequality \citep[see page 37 in][]{wainwright2019} one has
    \begin{align*}
        \P\{\big|\hat\zeta_{n,\sigma} - \zeta_\sigma(\Pr)\big|> \epsilon\} \leq \exp\Big\{-\frac{2\epsilon^2}{\sum_{i=1}^n \frac{64}{n^2}}\Big\} = \exp\Big\{-\frac{n\epsilon^2}{32}\Big\}.
    \end{align*}
    This completes the proof.
\end{proof}

\begin{proof}[\bf Proof of Proposition \ref{thm:asymp-null}:]
    Note that the estimator $\hat\zeta_{n,\sigma}$ is a U-statistic with the kernel
       \begin{align*}
        g\big((\bm x_1, \bm x_1^\prime, \bm y_1, \bm z_1),(\bm x_2, \bm x_2^\prime, \bm y_2, \bm z_2)\big) = &  \exp\bigg\{-\frac{\sigma^2\|\bm v_1-\bm v_2\|^2}{2}\bigg\}+\exp\bigg\{-\frac{\sigma^2\|\bm v^{'}_1-\bm v^{'}_2\|^2}{2}\bigg\} \\
        & \hspace{0.5in}-\exp\bigg\{-\frac{\sigma^2\|\bm v_1-\bm v^{'}_2\|^2}{2}\bigg\}-\exp\bigg\{-\frac{\sigma^2\|\bm v^{'}_2-\bm v_1\|^2}{2}\bigg\},
    \end{align*}
    of degree 2, where $\bm v_i = (\bm x_i, \bm y_i, \bm z_i)$ and $\bm v^{'}_i = (\bm x_i^\prime, \bm y_i, \bm z_i)$ for $i = 1,2$. The first order Hoeffding projection of $g(.,.)$ is 
    \begin{align}
      g_1\big(\bm x_1,\bm x_1^\prime, \bm y_1, \bm z_1\big) = \E\Big\{K({\bm v}_1,{\bm V}_2)\Big\}+\E\Big\{K(\bm v_1^\prime,{\bm V}_2^\prime)\Big\} -\E\Big\{K({\bm v}_1,{\bm V}_2^\prime)\Big\}-\E\Big\{K({\bm v}_1^\prime,{\bm V}_2)\Big\}
      \label{first-projection}
    \end{align}
    where $K({\bf x},{\bf y})= \exp\{-\sigma^2\|{\bf x}-{\bf y}\|^2/2\}$, $\bm V_1 = (\bm X_1,\bm Y_1, \bm Z_1)$ and ${\bm V}_1^\prime = (\bm X_1^\prime, \bm Y_1, \bm Z_1)$. Under $H_0$, ${\bm V}_1$ and ${\bm V}_1^\prime$ are identically distributed and hence $g_1\big(\bm x_1, \bm x_1^\prime, \bm y_1, \bm z_1\big) = 0$. Therefore, using Theorem 1 from \cite{lee2019u} p.79, one gets that $n\hat\zeta_{n,\sigma}$ converges in distribution to $\sum_{i = 1}^\infty \lambda_i (Z_i^2-1)$ where $\{Z_i\}$ are independent standard normal random variables and $\{\lambda_i\}$ are the eigenvalues of the integral equation 
    $$\int g\big((\bm x_1, \bm x_1^\prime, \bm y_1, \bm z_1),(\bm x_2, \bm x_2^\prime, \bm y_2, \bm z_2)\big) f\big(\bm x_2, \bm x_2^\prime, \bm y_2, \bm z_2\big) \mathrm dF\big(\bm x_2, \bm x_2^\prime, \bm y_2, \bm z_2\big) = \lambda f\big(\bm x_1, \bm x_1^\prime, \bm y_1, \bm z_1\big),$$
    where $F$ is the joint distribution of $(\bm X_1, \bm X_1^\prime, \bm Y_1, \bm Z_1)$. 
    
\end{proof}

\begin{proof}[\bf Proof of Proposition \ref{thm:asymp-alt}:]
    Under $H_1$, the function $g_1\big(\bm x, \bm x^\prime, \bm y, \bm z\big)$ defined in equation \eqref{first-projection} is non-degenerate. Therefore, using Theorem 1 from \cite{lee2019u} p.76, one gets the asymptotic normality of $\sqrt{n}(\hat\zeta_{n,\sigma}-\zeta_\sigma(\Pr))$ with the mean zero and the variance $4\sigma_1^2$, where $\sigma_1^2 = \var\big(g_1\big(\bm X_1, \bm X_1^\prime, \bm Y_1,\bm Z_1\big)\big)$.
\end{proof}

\begin{proof}[\bf Proof of Proposition \ref{prop:level-property}:]
    Define $({\bm X}_1^\prime,\bm Y_1,\bm Z_1),({\bm X}_2^\prime,\bm Y_2, \bm Z_2),\ldots, ({\bm X}_n^\prime,\bm Y_n, \bm Z_n)$ as in Proposition 2.3. Under $H_0$, $({\bm X}_i,{\bm X}_i^\prime, \bm Y_i, \bm Z_i)$ and $({\bm X}_i^\prime, {\bm X}_i, \bm Y_i, \bm Z_i)$ are identically distributed for each $i=1,2,\ldots, n$. Therefore, for any fixed $n$, the joint distribution of $(\hat\zeta_{n,\sigma},\hat c_{1-\alpha})$ is identical to the joint distribution of $(\hat\zeta_{n,\sigma}(\pi),\hat c_{1-\alpha})$, where $\pi$ is an element of $\{0,1\}^n$. Let $\mathcal{D}^\prime = \{({\bm X}_i,{\bm X}_i^\prime, \bm Y_i, \bm Z_i)\mid i=1,2,\ldots, n\}$ denote the augmented data. Then,
    $$\P\{\hat\zeta_{n,\sigma}>\hat c_{1-\alpha}\} = \P\{\hat\zeta_{n,\sigma}(\pi)>\hat c_{1-\alpha}\} = \E\big\{\P\{\hat\zeta_{n,\sigma}(\pi)>\hat c_{1-\alpha}\mid \mathcal{D}^\prime\}\big\}\leq \alpha.$$
    The last inequality follows from the fact that $\hat c_{1-\alpha}$ is the $(1-\alpha)$th quantile of $\hat\zeta_{n,\sigma}(\pi)|\mathcal{D}^\prime$, as given in the resampling algorithm A-C.
\end{proof}

\begin{proof}[\bf Proof of Proposition \ref{prop:cutoff-bound}:]
    Here $({\bm X}_1,\bm Y_1,\bm Z_1),({\bm X}_2,\bm Y_2, \bm Z_2),\ldots, ({\bm X}_n,\bm Y_n, \bm Z_n)$ are independent copies of $(\bm X, \bm Y, \bm Z)\sim \Pr$. Define $({\bm X}_1^\prime,\bm Y_1,\bm Z_1),({\bm X}_2^\prime,\bm Y_2, \bm Z_2),\ldots, ({\bm X}_n^\prime,\bm Y_n, \bm Z_n)$ as in Proposition 2.3, and $\mathcal{D}^\prime$ denote the augmented dataset $\{(\bm X_i,\bm X_i^\prime, \bm Y_i,\bm Z_i)|i=1,2,\ldots, n\}$. Let $\pi=(\pi(1),\pi(2),\ldots,\pi(n))$ be uniformly distributed on the set $\{0,1\}^n$. For $i=1,2,\ldots,n$, define ${\bm U}_i = \pi(i) {\bm X}_i + (1-\pi(i)) {\bm X}_i^\prime$ and ${\bm U}_i^\prime = (1-\pi(i)) {\bm X}_i + \pi(i) {\bm X}_i^\prime$. Since $$g\big(({\bm U}_i,{\bm U}_i^\prime, \bm Y_i, \bm Z_i),({\bm U}_i,{\bm U}_i^\prime, \bm Y_i,\bm Z_i)\big)\geq 0$$
    for all $i=1,2,\ldots,n$, one gets

    \begin{align*}
        n(n-1)\hat\zeta_{n,\sigma}(\pi) = & \sum_{1\leq i\not=j\leq n} g\big(({\bm U}_i,{\bm U}_i^\prime, \bm Y_i, \bm Z_i),({\bm U}_j,{\bm U}_j^\prime, \bm Y_j,\bm Z_j)\big) \\
        & \leq \sum_{1\leq i,j\leq n} g\big(({\bm U}_i,{\bm U}_i^\prime, \bm Y_i, \bm Z_i),({\bm U}_j,{\bm U}_j^\prime, \bm Y_j,\bm Z_j)\big) = n^2 \zeta_\sigma(F_n),
    \end{align*}
    where $F_n$ denotes the empirical probability distribution of $({\bm U}_1,{\bm U}_1^\prime,\bm Y_1, \bm Z_1),\ldots, ({\bm U}_n,{\bm U}_n^\prime, \bm Y_n, \bm Z_n)$. Since, $\zeta_\sigma(F_n)$ is a non-negative random variable, using Markov inequality, for any $\epsilon>0$, one has
    $$\P\big\{\hat\zeta_{n,\sigma}(\pi)>\epsilon\mid \mathcal{D}^\prime\big\} \leq \P\big\{n^2\zeta_\sigma(F_n)>n(n-1)\epsilon\mid \mathcal{D}^\prime\big\} \leq \frac{n^2}{n(n-1)\epsilon} \E\big\{\zeta_\sigma(F_n)\mid \mathcal{D}^\prime\big\}.$$
    Now, taking $\epsilon=  \frac{n}{(n-1)\alpha}\E\big\{\zeta_\sigma(F_n)\mid \mathcal{D}^\prime\big\}$, we get
    $\P\big\{\hat\zeta_{n,\sigma}(\pi)>\epsilon\mid \mathcal{D}^\prime\big\}\leq \alpha.$ Therefore, from the definition of $\hat c_{1-\alpha}$ (see Resampling Algorithm A-C), one has
    $$\hat c_{1-\alpha} \leq \frac{n}{(n-1)\alpha}\E\big\{\zeta_\sigma(F_n)\mid \mathcal{D}^\prime\big\}.$$
    Also, note that
    \begin{align*}
        \E\{\zeta_\sigma(F_n)\mid \mathcal{D}^\prime\} = \frac{1}{n^2}\Bigg[& \sum_{1\leq i\not=j\leq n} \E\Big\{g\big(({\bm U}_i,{\bm U}_i^\prime, \bm Y_i, \bm Z_i),({\bm U}_j,{\bm U}_j^\prime, \bm Y_j,\bm Z_j)\big)\mid \mathcal{D}^\prime\Big\} \\
        & + \left.\sum_{1\leq i\leq n} \E\Big\{g\big(({\bm U}_i,{\bm U}_i^\prime, \bm Y_i, \bm Z_i),({\bm U}_i,{\bm U}_i^\prime, \bm Y_i,\bm Z_i)\big)\mid \mathcal{D}^\prime\Big\}\right].
    \end{align*}
    Now, for any $1\leq i\not=j\leq n$,
    \begin{align*}
      & \E\Big\{\exp\{-\frac{\sigma^2\|({\bm U}_i,{\bm U}_i^\prime, \bm Y_i, \bm Z_i)-({\bm U}_j,{\bm U}_j^\prime, \bm Y_j,\bm Z_j)\|^2}{2}\}\mid \mathcal{D}^\prime\Big\}\\
        & \hspace{0.25in}= \frac{1}{4}\sum_{\pi(1),\pi(2)\in\{0,1\}^2} \exp\Big\{-\frac{\sigma^2\|\pi(1){\bm V}_i+(1-\pi(1)){\bm V}_i^\prime-\pi(2){\bm V}_j-(1-\pi(2)){\bm V}_j^\prime\|^2}{2}\Big\} =\delta_n(i,j) \text{ (say)}.
    \end{align*}
    Similarly, one can also show that 
    \begin{align*}
        \E\Big\{\exp\{-\frac{\sigma^2\|{\bm V}_i^\prime-{\bm V}_j^\prime\|^2}{2}\}\mid \mathcal{D}^\prime\Big\} = & ~\E\Big\{\exp\{-\frac{\sigma^2\|{\bm V}_i-{\bm V}_j^\prime\|^2}{2}\}\mid \mathcal{D}^\prime\Big\}\\
        =&~\E\Big\{\exp\{-\frac{\sigma^2\|{\bm V}_i^\prime-{\bm V}_j\|^2}{2}\}\mid \mathcal{D}^\prime\Big\} = \delta_n(i,j).
    \end{align*}
    Hence,
    $\E\Big\{g\big(({\bm U}_i,{\bm U}_i^\prime, \bm Y_i, \bm Z_i),({\bm U}_j,{\bm U}_j^\prime, \bm Y_j,\bm Z_j)\big)\mid \mathcal{D}^\prime\Big\} = \delta_n(i,j)+\delta_n(i,j)-\delta_n(i,j)-\delta_n(i,j) = 0.$ 
    Similarly, one can also show that for any $1\leq i\leq n$,
    \begin{align*}
        & \E\Big\{g\big(({\bm U}_i,{\bm U}_i^\prime, \bm Y_i, \bm Z_i),({\bm U}_i,{\bm U}_i^\prime, \bm Y_i,\bm Z_i)\big)\mid \mathcal{D}^\prime\Big\}\\
        & = 2\Big(1-\frac{1}{2}\sum_{\pi\in\{0,1\}} \exp\Big\{-\frac{\sigma^2\|\pi(1){\bm V}_i+(1-\pi(1)){\bm V}_i^\prime-(1-\pi(1)){\bm V}_i-\pi(1){\bm V}_i^\prime\|^2}{2}\Big\}\Big)\leq 2
    \end{align*}
    Combining these, one gets $\hat c_{1-\alpha}\leq 2(\alpha (n-1))^{-1}$. This completes the proof.
\end{proof}

\begin{proof}[\bf Proof of Proposition \ref{prop:test-consistency}:]
    Note that Proposition 2.3 ensures that $\hat\zeta_{n,\sigma}$ is a consistent estimator of $\zeta_\sigma(\Pr)$, where $\zeta_\sigma(\Pr)=0$ under $H_0$ and it is positive under $H_1$ (see Proposition 2.6). By Proposition 2.7, $\hat c_{1-\alpha}\stackrel{a.s.}{\rightarrow}0$ as $n\rightarrow\infty$. Combining these, we get under $H_1$, the power of the test ${\P}(\hat\zeta_{n,\sigma}>\hat c_{1-\alpha})$ converges to one as $n$ diverges to infinity. 
\end{proof}

\begin{proof}[\bf Proof of Proposition \ref{thm:monte-carlo-consistency}:]
Given the augmented data set $\mathcal{D}^\prime$, define the distribution functions
$$F(t) = \frac{1}{2^n}\Big\{\sum_{\pi\in \{0,1\}^n}I[\hat\zeta_{n,\sigma}(\pi)\leq t]\Big\}\text{ and }F_B(t) = \frac{1}{B}\left\{\sum_{i=1}^BI[\hat\zeta_{n,\sigma}(\pi_i)\leq t]\right\}.$$
Then,
\begin{equation*}
    \begin{split}
        \big|p_{n}-p_{n,B}\big| & = \bigg|\frac{1}{2^n}\bigg\{\sum_{\pi\in \{0,1\}^n}I[\hat\zeta_{n,\sigma}(\pi)\geq \hat\zeta_{n,\sigma}]\bigg\}-\frac{1}{B+1}\bigg\{\sum_{i=1}^BI[\hat\zeta_{n,\sigma}(\pi_i)\geq \hat\zeta_{n}]+1\bigg\}\bigg|\\
        & = \bigg|\frac{1}{2^n}\bigg\{\sum_{\pi\in \{0,1\}^n}I[\hat\zeta_{n,\sigma}(\pi)< \hat\zeta_{n,\sigma}]\bigg\}-\frac{1}{B+1}\bigg\{\sum_{i=1}^BI[\hat\zeta_{n,\sigma}(\pi_i)< \hat\zeta_{n,\sigma}]\bigg\}\bigg|\\
        &  = \big|F(\hat\zeta_{n}-)-\frac{B}{B+1}F_B(\hat\zeta_{n}-)\Big|\\
        & \leq \Big|F(\hat\zeta_{n,\sigma}-)-F_B(\hat\zeta_{n,\sigma}-)\Big| + \Big|\frac{F_B(\hat\zeta_{n,\sigma}-)}{B+1}\Big| \leq \sup_{t\in\R}\big|F(t)-F_B(t)\big|+\frac{1}{B+1}
    \end{split}
\end{equation*}

Therefore, conditioned on $\mathcal{D}^\prime$, the Dvoretzky-Keifer-Wolfwitz inequality (\cite{massart1990tight}) gives $$\P\{\sup_{t\in\R}|F(t)-F_B(t)|>\epsilon\}\leq 2e^{-2B\epsilon^2}.$$ Now, using simple algebraic calculations one gets our desired result.
\end{proof}

\begin{proof}[\bf Proof of Proposition \ref{prop:lanfd}:]
    It is easy to see that the likelihood ratio of $F_{1-\frac{\beta_n}{\sqrt{n}}} = (1-\beta_n/\sqrt{n})G+\beta_n/\sqrt{n}F$ and $G$ is $\Big(1+\frac{\beta_n}{\sqrt{n}}\big(f({\bf u})/g({\bf u})-1\big)\Big)$. Hence if $({\bm X}_1,\bm Y_1, \bm Z_1),\ldots,({\bm X}_n,\bm Y_n, \bm Z_n)$ are i.i.d. observations from $G$, then the log-likelihood ratio is given by,
    \begin{align*}
        L_N = \log\Big\{\prod_{i=1}^n\frac{dF_{1-\frac{\beta_n}{\sqrt{n}}}}{dG}({\bm X}_i,\bm Y_i, \bm Z_i)\Big\} & = \sum_{i=1}^n \log\Big\{\frac{dF_{1-\frac{\beta_n}{\sqrt{n}}}}{dG}({\bm X}_i,\bm Y_i, \bm Z_i)\Big\}\\
        & = \sum_{i=1}^n \log\Big(1+\frac{\beta_n}{\sqrt{n}}\big(f({\bm X}_i,\bm Y_i, \bm Z_i)/g({\bm X}_i,\bm Y_i, \bm Z_i)-1\big)\Big).
    \end{align*}
    Using the fact that
    $\log(1+y) = y - \frac{y^2}{2}+\frac{1}{2}y^2h(y)$ where $h(\cdot)$ is continuous and $\lim\limits_{y\to0}h(y)=0,$ one gets 
    \begin{align*}
        L_N = & ~\sum_{i=1}^n \frac{\beta_n}{\sqrt{n}}\big(f({\bm X}_i,\bm Y_i, \bm Z_i)/g({\bm X}_i,\bm Y_i, \bm Z_i)-1\big)-\sum_{i=1}^n \frac{\beta_n^2}{2n}\big(f({\bm X}_i,\bm Y_i, \bm Z_i)/g({\bm X}_i,\bm Y_i, \bm Z_i)-1\big)^2\\
        &~~~+\sum_{i=1}^n \frac{\beta_n^2}{2n}\big(f({\bm X}_i,\bm Y_i, \bm Z_i)/g({\bm X}_i,\bm Y_i, \bm Z_i)-1\big)^2h\Big(\frac{\beta_n}{\sqrt{n}}\big(f({\bm X}_i,\bm Y_i, \bm Z_i)/g({\bm X}_i,\bm Y_i, \bm Z_i)-1\big)\Big).
    \end{align*}

    Under the assumptions $\int \big(f(\bm x,\bm y,\bm z)/g(\bm x, \bm y, \bm z)-1\big)^2 g(\bm x, \bm y, \bm z) \mathrm d{\bm x}\mathrm d \bm y \mathrm d \bm z$ is finite and $\beta_n\rightarrow\beta$, as $n$ grows to infinity, one has
    $$\sum_{i=1}^n \frac{\beta_n^2}{n}\big(f({\bm X}_i,\bm Y_i, \bm Z_i)/g({\bm X}_i,\bm Y_i, \bm Z_i)-1\big)^2\stackrel{a.s.}{\to} \beta^2 \E\Big(\big(f({\bm X}_1,\bm Y_1, \bm Z_1)/g({\bm X}_1,\bm Y_1, \bm Z_1)-1\big)^2\Big).$$
    Hence, only need to show that 
    $$\sum_{i=1}^n \frac{\beta_n^2}{n}\big(f({\bm X}_i,\bm Y_i,\bm Z_i)/g({\bm X}_i,\bm Y_i,\bm Z_i)-1\big)^2h\Big(\frac{\beta_n}{\sqrt{n}}\big(f({\bm X}_i,\bm Y_i,\bm Z_i)/g({\bm X}_i,\bm Y_i,\bm Z_i)-1\big)\Big)\stackrel{\P}{\rightarrow} 0.$$
    Notice that
    \begin{align*}
        &\sum_{i=1}^n \frac{\beta_n^2}{n}\big(f({\bm X}_i,\bm Y_i, \bm Z_i)/g({\bm X}_i,\bm Y_i, \bm Z_i)-1\big)^2h\Big(\frac{\beta_n}{\sqrt{n}}\big(f({\bm X}_i,\bm Y_i, \bm Z_i)/g({\bm X}_i,\bm Y_i, \bm Z_i)-1\big)\Big)\\
        & \leq \max_{1\leq i\leq n}|h\Big(\frac{\beta_n}{\sqrt{n}}\big(f({\bm X}_i,\bm Y_i, \bm Z_i)/g({\bm X}_i,\bm Y_i, \bm Z_i)-1\big)\Big)|\sum_{i=1}^n \frac{\beta_n^2}{n}\big(f({\bm X}_i,\bm Y_i, \bm Z_i)/g({\bm X}_i,\bm Y_i, \bm Z_i)-1\big)^2.
    \end{align*}
    Therefore, it suffices to show that $\max_{1\leq i\leq N}|h\Big(\frac{\beta_n}{\sqrt{n}}\big(f({\bm X}_i,\bm Y_i, \bm Z_i)/g({\bm X}_i,\bm Y_i, \bm Z_i)-1\big)\Big)|$ converges to zero in probability, which follows if $\max_{1\leq i\leq N}|\frac{\beta_n}{\sqrt{n}}\big(f({\bm X}_i,\bm Y_i, \bm Z_i)/g({\bm X}_i,\bm Y_i, \bm Z_i)-1\big)|$ converges to zero in probability (as $\lim_{y\to 0} h(y) = 0$ and it is continuous). Note that,
    \begin{align*}
        & \P\Big\{\max_{1\leq i\leq n}\big|\frac{1}{\sqrt{n}}\big(f({\bm X}_i,\bm Y_i, \bm Z_i)/g({\bm X}_i,\bm Y_i, \bm Z_i)-1\big)\big|>\epsilon\Big\}\\
        & \leq\sum_{i=1}^n \P\Big\{\big|\frac{1}{\sqrt{n}}\big(f({\bm X}_i,\bm Y_i, \bm Z_i)/g({\bm X}_i,\bm Y_i, \bm Z_i)-1\big)\big|>\epsilon\Big\}\\
        & = n \P\Big\{\big|\frac{1}{\sqrt{n}}\big(f({\bm X}_1,\bm Y_1, \bm Z_1)/g({\bm X}_1,\bm Y_1, \bm Z_1)-1\big)\big|>\epsilon\Big\}\\
        & = n \E\Big\{I\big(\big|\frac{1}{\sqrt{n}}\big(f({\bm X}_1,\bm Y_1, \bm Z_1)/g({\bm X}_1,\bm Y_1, \bm Z_1)-1\big)\big|>\epsilon\big)\Big\}\\
        & \leq n \E\Big\{\frac{\big(f({\bm X}_1,\bm Y_1, \bm Z_1)/g({\bm X}_1,\bm Y_1, \bm Z_1)-1\big)^2}{n\epsilon^2}I\big|\frac{1}{\sqrt{n}}\big(f({\bm X}_1,\bm Y_1, \bm Z_1)/g({\bm X}_1,\bm Y_1, \bm Z_1)-1\big)\big|>\epsilon\Big\}\\
        & \leq \frac{1}{\epsilon^2}\E\Big\{\big(f({\bm X}_1,\bm Y_1, \bm Z_1)/g({\bm X}_1,\bm Y_1, \bm Z_1)-1\big)^2I\big(\big|\frac{1}{\sqrt{n}}\big(f({\bm X}_1,\bm Y_1, \bm Z_1)/g({\bm X}_1,\bm Y_1, \bm Z_1)-1\big)\big|>\epsilon\big)\Big\}.
    \end{align*}
    Since $I\big(\big|\frac{1}{\sqrt{n}}\big(f({\bm X}_1,\bm Y_1, \bm Z_1)/g({\bm X}_1,\bm Y_1, \bm Z_1)-1\big)\big|>\epsilon\big)$ converges to zero in probability, the right-hand side converges to zero by the Dominated Convergence Theorem. Hence, one has
    \begin{align*}
        \log\Big\{\prod_{i=1}^n\frac{dF_{1-\beta_n/\sqrt{n}}}{dG}({\bm X}_i,\bm Y_i, \bm Z_i)\Big\} = &~\frac{\beta_n}{\sqrt{n}}\sum_{i=1}^n \Big(f({\bm X}_i,\bm Y_i, \bm Z_i)/g({\bm X}_i,\bm Y_i, \bm Z_i)1\Big)\\
        &~~~~ -\frac{\beta_n^2}{2}\E\Big\{f({\bm X}_1,\bm Y_1, \bm Z_1)/g({\bm X}_1,\bm Y_1, \bm Z_1)-1\Big\}^2 + o_P(1).
    \end{align*}
    This completes the proof.
\end{proof}

\begin{proof}[\bf Proof of Theorem \ref{thm:local-limit}:]
    Let $\bm V_1 = ({\bm X}_1,\bm Y_1, \bm Z_1),\ldots,\bm V_n = ({\bm X}_n,\bm Y_n, \bm Z_n)$ i.i.d. observations and $\bm V_1^\prime = ({\bm X}_1^\prime,\bm Y_1, \bm Z_1),\ldots,\bm V_n^\prime = ({\bm X}_n,\bm Y_n, \bm Z_n)$ be their respective null exchangeable pairs. To prove this theorem, one only needs to find the limit distribution of $\sqrt{n}\big(\frac{1}{n}\sum_{i=1}^n h({\bm X}_i,{\bm X}_i^\prime,\bm Y_i, \bm Z_i) - \E\big\{h({\bm X}_1,{\bm X}_1^\prime, \bm Y_1, \bm Z_1)\big\}\big)$ for some square-integrable function $h$ when $\bm V_1\sim F_{1-\beta_nn^{-1/2}}= \big(1-\frac{\beta_n}{\sqrt{n}}\big) G + \frac{\beta_n}{\sqrt{n}} F$, where $F_{1-\beta_nn^{-1/2}}$ satisfies the condition of Proposition 3.1. Using the bivariate central limit theorem, one can say that as $n$ diverges to infinity, the joint distribution of 
    $$\sqrt{n}\big(\frac{1}{n}\sum_{i=1}^n h({\bm X}_i,{\bm X}_i^\prime, \bm Y_i, \bm Z_i) - \E\big\{h({\bm X}_1,{\bm X}_1^\prime, \bm Y_1, \bm Z_1)\big\}\big)$$
    and $$\frac{\beta_n}{\sqrt{n}}\sum_{i=1}^n \big(\frac{f({\bm X}_i, \bm Y_i, \bm Z_i)}{g({\bm X}_i, \bm Y_i, \bm Z_i)}-1\big)-\frac{\beta_n^2}{2}\E\Big\{\frac{f({\bm X}_1, \bm Y_1, \bm Z_1)}{g({\bm X}_1, \bm Y_1,\bm Z_1)}-1\Big\}^2$$
    converges to a bivariate normal distribution with mean\\
    $$\mu = \begin{pmatrix}0\\ -\frac{\beta^2}{2}\E\Big\{\frac{f({\bm X}_1, \bm Y_1, \bm Z_1)}{g({\bm X}_1, \bm Y_1, \bm Z_1)}-1\Big\}^2\end{pmatrix}$$ 
    and variance
    $$\Sigma = 
    \Biggl(
    \begin{tabular}{cc}
    $\var\big( h({\bm X}_1,{\bm X}_1^\prime, \bm Y_1, \bm Z_1)\big)$ & $\tau$\\ $\tau$  & $\beta^2\E\big\{\frac{f({\bm X}_1, \bm Y_1, \bm Z_1)}{g({\bm X}_1, \bm Y_1,\bm Z_1)}-1\big\}^2$
    \end{tabular}
     \Biggr),$$
    where 
    \begin{align*}
    \tau  & = \E\Big\{\big\{h({\bm X}_1,{\bm X}_1^\prime, \bm Y_1,\bm Z_1) - \E\{h({\bm X}_1,{\bm X}_1^\prime, \bm Y_1,\bm Z_1)\}\big\}\beta\big\{\frac{f({\bm X}_1, \bm Y_1, \bm Z_1)}{g({\bm X}_1, \bm Y_1,\bm Z_1)}-1\big\}\Big\}\\
    & = \beta\int \big\{h\big({\bm x}, \bm x^\prime, \bm y, \bm z)) - \E\{h({\bm X}_1,{\bm X}_1^\prime, \bm Y_1, \bm Z_1)\}\big\}\big(f({\bm x},\bm y, \bm z)-g({\bm x}, \bm y, \bm z)\big) \mathrm d{\bm x} \mathrm d\mu_{\bm z}({\bm x^\prime}) \mathrm d{\bm y}  \mathrm d{\bm z},    
    \end{align*}
    $\mu_{\bm z}$ being the distribution of $\bm X|\bm Z=\bm z$. Now using Le Cam's third lemma \citep[see][]{van2000asymptotic}, as $n$ diverges to infinity, under $F_{1-\beta_n/\sqrt{n}}$, one has
    $$\sqrt{n}\big(\frac{1}{n}\sum_{i=1}^n h({\bm X}_i,{\bm X}_i^\prime, \bm Y_i,\bm Z_i) - \E\big\{h({\bm X}_1,{\bm X}_1^\prime, \bm Y_1,\bm Z_1)\big\}\big)\stackrel{D}{\to} N\Big(\tau, \var\big(h({\bm X}_1,{\bm X}_1^\prime, \bm Y_1, \bm Z_1)\big)\Big).$$
   Using standard theory of degenerate U-statistics, contiguity theory and approximation arguments \citep[see][]{lee2019u}, one gets
     \begin{align*}
       n\hat\zeta_{n,\sigma}\stackrel{D}{\to} & \sum_{k=1}^\infty \lambda_k \left(\big(U_k+\beta~\E_F\{\psi_k({\bm X}_1,{\bm X}_1^\prime, \bm Y_1, \bm Z_1)\}\big)\Big)^2-1\right)
    \end{align*}
    under $F_{1-\beta_n/\sqrt{n}}$, where $\{U_i\}$ is a sequence of i.i.d. normal random variables, $\lambda_k$s and $\psi_k$s are the eigenvalues and eigenfunctions of the integral equation 
$$\E\left\{g\big((\bm x, \bm x^\prime, \bm y, \bm z),(\bm X, \bm X^\prime, \bm Y, \bm Z)\big)\psi(\bm X, \bm X^\prime, \bm Y, \bm Z)\right\} = \lambda \psi(\bm x, \bm x^\prime, \bm y, \bm z),$$
where $g(\cdot, \cdot)$ is as defined in (5) and $\bm V = (\bm X, \bm Y, \bm Z)\sim G$ and $\bm V^\prime$ is the null exchangeable pair of $\bm V$. This completes the proof.
\end{proof}

{
\begin{lemma}
    Let $\bm V_1 = ({\bm X}_1,\bm Y_1, \bm Z_1),\ldots,\bm V_n = ({\bm X}_n,\bm Y_n, \bm Z_n)$ be independent copies of $\bm V \sim \Pr$ and $\bm V_1^\prime, \ldots, \bm V_n^\prime$ be their null exchangeable pairs. Let $\mathcal{D}^\prime = \{(\bm X_i, \bm X_i^\prime, \bm Y_i, \bm Z_i)\}_{1\le i\leq n}$ be the augmented data set. For $i=1,2,\ldots,n$, define $\bm U_i = \pi_i \bm X_i + (1-\pi_i)\bm X_i^\prime$ and $\bm U_i^\prime = \pi_i \bm X_i^\prime + (1-\pi_i)\bm X_i$, where $\pi_1,\ldots, \pi_n\stackrel{iid}{\sim} \textnormal{Bernoulli}(0.5)$. Let $f(\cdot)$ be a function such that $f(\bm x, \bm x^\prime, \bm y, \bm z) = -f(\bm x^\prime, \bm x, \bm y, \bm z)$ and $\E[f(\bm X_1,\bm X_1^\prime, \bm Y_1, \bm Z_1)^2]$ is finite. Then, the conditional distribution of $n^{-1/2}\sum_{i=1}^n f(\bm U_i,\bm U_i^\prime, \bm Y_i, \bm Z_i)$ given $\mathcal{D}^\prime$ converges weakly to a $\mathcal{N}_1(0,\E[f^2(\bm U_1,\bm U_1^\prime, \bm Y_1, \bm Z_1)])$ in probability as $n$ grows to infinity. 
    \label{conditional-CLT}
\end{lemma}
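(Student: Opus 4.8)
The plan is to condition on the augmented data $\mathcal{D}^\prime$ and exploit the fact that, given $\mathcal{D}^\prime$, the summands $f(\bm U_i,\bm U_i^\prime)$ become conditionally independent with an explicit Rademacher structure, so that a Lindeberg--Feller central limit theorem applies on an event of full probability.

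First I would record the key algebraic simplification. Put $\epsilon_i = 2\pi_i - 1$, so that $\epsilon_1,\ldots,\epsilon_n$ are i.i.d.\ Rademacher signs independent of $\mathcal{D}^\prime$. If $\pi_i = 1$ then $(\bm U_i,\bm U_i^\prime) = (\bm V_i,\bm V_i^\prime)$, while if $\pi_i = 0$ then $(\bm U_i,\bm U_i^\prime) = (\bm V_i^\prime,\bm V_i)$; since $f(\bm v^\prime,\bm v) = -f(\bm v,\bm v^\prime)$, in either case $f(\bm U_i,\bm U_i^\prime) = \epsilon_i\, a_i$ with $a_i := f(\bm V_i,\bm V_i^\prime)$, and moreover $f^2(\bm U_i,\bm U_i^\prime) = a_i^2$ regardless of $\pi_i$. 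Thus, conditionally on $\mathcal{D}^\prime$, the quantity of interest is the weighted Rademacher sum $n^{-1/2}\sum_{i=1}^n \epsilon_i a_i$ with weights $a_i$ that are deterministic given $\mathcal{D}^\prime$; its conditional mean is $0$ and its conditional variance is $s_n^2 := n^{-1}\sum_{i=1}^n a_i^2$.

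Next I would verify the two ingredients of the Lindeberg--Feller theorem for the triangular array $\{\epsilon_i a_i/\sqrt{n}\}_{1\le i\le n}$, working pointwise in the sample space of $\mathcal{D}^\prime$. Since the pairs $(\bm V_i,\bm V_i^\prime)$ are i.i.d.\ and $\E[a_1^2] = \E[f^2(\bm V_1,\bm V_1^\prime)] = \E[f^2(\bm U_1,\bm U_1^\prime)] =: \sigma^2 < \infty$, the strong law of large numbers gives $s_n^2 \to \sigma^2$ almost surely. For the Lindeberg condition I must show that for every $\delta > 0$, $n^{-1}\sum_{i=1}^n a_i^2 \mathbf{1}\{|a_i| > \delta\sqrt{n}\} \to 0$ almost surely. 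This is the step I expect to be the main obstacle, since a finite second moment only yields the corresponding statement in expectation for free; to upgrade it to an almost-sure statement I would fix $M>0$, note that for $n \ge M^2/\delta^2$ one has $\mathbf{1}\{|a_i| > \delta\sqrt{n}\} \le \mathbf{1}\{|a_i| > M\}$, hence by the SLLN $\limsup_{n}\, n^{-1}\sum_{i=1}^n a_i^2\mathbf{1}\{|a_i|>\delta\sqrt{n}\} \le \lim_{n} n^{-1}\sum_{i=1}^n a_i^2 \mathbf{1}\{|a_i|>M\} = \E[a_1^2\mathbf{1}\{|a_1|>M\}]$ almost surely, and then let $M\to\infty$ and invoke dominated convergence.

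Finally, on the almost-sure event where both $s_n^2\to\sigma^2$ and the Lindeberg condition hold, the Lindeberg--Feller CLT gives that the conditional law of $n^{-1/2}\sum_{i=1}^n f(\bm U_i,\bm U_i^\prime)$ given $\mathcal{D}^\prime$ converges weakly to $\mathcal{N}_1(0,\sigma^2)$; equivalently, the conditional characteristic function $\prod_{i=1}^n\cos(t a_i/\sqrt{n})$ converges to $e^{-t^2\sigma^2/2}$ for each $t$. Since almost-sure convergence implies convergence in probability, this is precisely the assertion of the lemma. (If one prefers to avoid citing Lindeberg--Feller, the same conclusion follows by expanding $\log\cos(x) = -x^2/2 + O(x^4)$ after separating off the indices with $|a_i| > \delta\sqrt{n}$, but the bookkeeping for that truncation is exactly the estimate above.)
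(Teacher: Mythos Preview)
Your proof is correct and follows essentially the same strategy as the paper --- a conditional Lindeberg--Feller argument with the SLLN controlling the variance and a truncation argument handling the Lindeberg condition. Your Rademacher representation $f(\bm U_i,\bm U_i^\prime)=\epsilon_i a_i$ is a clean simplification the paper does not make explicit (it instead writes out $\tfrac12 f(\bm V_j,\bm V_j^\prime)^2+\tfrac12 f(\bm V_j^\prime,\bm V_j)^2$ and manipulates the characteristic function by hand via $|e^{itx}-(1+itx-\tfrac12 t^2x^2)|\le\min\{|tx|^2,|tx|^3\}$), and your almost-sure verification of the Lindeberg condition is slightly stronger than the paper's, which only shows it in probability via $\E\big[n^{-1}\sum_j a_j^2\mathbf{1}\{|a_j|\ge\epsilon S_n\}\big]\to 0$ and DCT.
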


\begin{proof}[\bf Proof of Lemma \ref{conditional-CLT}:]
    Since $\E[f(\bm U_1, \bm U_1^\prime, \bm Y_1, \bm Z_1)^2]$ is finite, both $\E[f(\bm X_1, \bm X_1^\prime, \bm Y_1, \bm Z_1)^2]$ and \linebreak $\E[f(\bm X_1^\prime, \bm X_1, \bm Y_1, \bm Z_1)^2]$ are also finite. By $f(\bm x,\bm x^\prime, \bm y, \bm z) = - f(\bm x^\prime, \bm x, \bm y, \bm z)$, one also has $\E[f(\bm U_1, \bm U_1^\prime, \bm Y_1, \bm Z_1)\mid \mathcal{D}^\prime]$ = 0. Now, note that conditioned on $\mathcal{D}^\prime$, $\{f(\bm U_j,\bm U_j^\prime, \bm Y_j, \bm Z_j)\}_{1\leq j\leq n}$ form a triangular array of independent random variables. Therefore, one can use arguments similar to the Lindeberg's CLT to prove this result. So, first define
    \begin{align*}
        S_n^2 & = \sum_{j=1}^n \var[f(\bm U_j, \bm U_j^\prime, \bm Y_j, \bm Z_j)\big|\mathcal{D}^\prime]\\
        & = \sum_{j=1}^n \E[f(\bm U_j, \bm U_j^\prime, \bm Y_j, \bm Z_j)^2\big|\mathcal{D}^\prime]= \sum_{j=1}^n \frac{1}{2} f(\bm X_j, \bm X_j^\prime, \bm Y_j, \bm Z_j)^2 + \sum_{j = 1}^n \frac{1}{2} f(\bm X_j^\prime, \bm X_j, \bm Y_j, \bm Z_j)^2.
    \end{align*}
    So, by the strong law of large numbers, one gets
    \begin{align*}
        \frac{S_n^2}{n} \stackrel{a.s.}{\rightarrow}\frac{1}{2}\E[f(\bm X_1, \bm X_1^\prime, \bm Y_1, \bm Z_1)^2] + \frac{1}{2} \E[f(\bm X_1^\prime, \bm X_1, \bm Y_1, \bm Z_1)^2] = \E[f(\bm U_1, \bm U_1^\prime, \bm Y_1, \bm Z_1)^2] \mbox{    as } n \rightarrow \infty.
    \end{align*}
     Now, using the inequality 
       $\Big|e^{itx} - \big(1 + itx - \frac{1}{2} t^2 x^2\big)\Big| \leq \min\Big\{\big|tx\big|^2, \big|tx\big|^3\Big\}$,
    one gets
    \begin{align*}
        \Big|\E\big\{e^{it\frac{1}{\sqrt{n}}f({\bm U}_j,{\bm U}_j^\prime, \bm Y_j, \bm Z_j)} \big|\mathcal{D} \big\} - &\big(1-\frac{t^2}{2n} \E\big\{f^2(\bm U_j,\bm U_j^\prime, \bm Y_j, \bm Z_j)\big|\mathcal{D}^\prime\big\}\big)\Big|\\
        & \leq \E\Bigg\{\min\Big\{\big|\frac{t}{\sqrt{n}}f({\bm U}_j,{\bm U}_j^\prime, \bm Y_j, \bm Z_j)\big|^2, \big|\frac{t}{\sqrt{n}}f({\bm U}_j,{\bm U}_j^\prime, \bm Y_j, \bm Z_j)\big|^3\Big\} \big|\mathcal{D}^\prime\Bigg\}.
    \end{align*}
    Note that the expectation on the right hand side of the above inequality is finite. Now take any arbitrarily small $\epsilon>0$ and note that
    \begin{align*}
        & \E\Bigg\{\min\Big\{\big|\frac{t}{\sqrt{n}}f({\bm U}_j,{\bm U}_j^\prime, \bm Y_j, \bm Z_j)\big|^2, \big|\frac{t}{\sqrt{n}}f({\bm U}_j,{\bm U}_j^\prime, \bm Y_j, \bm Z_j)\big|^3\Big\} \big|\mathcal{D}^\prime\Bigg\}\\
        & = \E\Bigg\{\min\Big\{\big|\frac{t}{\sqrt{n}}f({\bm U}_j,{\bm U}_j^\prime, \bm Y_j, \bm Z_j)\big|^2, \big|\frac{t}{\sqrt{n}}f({\bm U}_j,{\bm U}_j^\prime, \bm Y_j, \bm Z_j)\big|^3\Big\} I[|f({\bm U}_j,{\bm U}_j^\prime, \bm Y_j, \bm Z_j)|<\epsilon S_n] \big|\mathcal{D}^\prime\Bigg\}\\
        & ~~~~~ + \E\Bigg\{\min\Big\{\big|\frac{t}{\sqrt{n}}f({\bm U}_j,{\bm U}_j^\prime, \bm Y_j, \bm Z_j)\big|^2, \big|\frac{t}{\sqrt{n}}f({\bm U}_j,{\bm U}_j^\prime, \bm Y_j, \bm Z_j)\big|^3\Big\} I[|f({\bm U}_j,{\bm U}_j^\prime, \bm Y_j, \bm Z_j)|\geq \epsilon S_n] \big|\mathcal{D}^\prime\Bigg\}\\
        & \leq \frac{\epsilon S_n |t|^3}{n^{3/2}} \E\Big\{ f({\bm U}_j,{\bm U}_j^\prime, \bm Y_j, \bm Z_j)^2\big|\mathcal{D}^\prime\Big\} \\
        & ~~~~~~~~~~~~ + \frac{t^2}{2n}\Big\{f({\bm X}_j,{\bm X}_j^\prime, \bm Y_j, \bm Z_j)^2 I[|f({\bm X}_j,{\bm X}_j^\prime, \bm Y_j, \bm Z_j)|\geq \epsilon S_n]\\
        & ~~~~~~~~~~~~~~~~~~~~~~~ + f({\bm X}_j^\prime,{\bm X}_j, \bm Y_j, \bm Z_j)^2 I[|f({\bm X}_j^\prime,{\bm X}_j, \bm Y_j, \bm Z_j)|\geq \epsilon S_n]\Big\}.
    \end{align*}
    Then, by the assumption $\E[f({\bm U}_j,{\bm U}_j^\prime, \bm Y_j, \bm Z_j)^2]<\infty$, one gets
    \begin{align*}
        & \sum_{j=1}^n \Big|\E[e^{it \frac{1}{\sqrt{n}}f({\bm U}_j,{\bm U}_j^\prime, \bm Y_j, \bm Z_j)}\big|\mathcal{D}^\prime] - \big(1-\frac{t^2}{2n} \E[f({\bm U}_j,{\bm U}_j^\prime, \bm Y_j, \bm Z_j)^2\big|\mathcal{D}^\prime]\big)\Big|\\
        & \leq \frac{\epsilon S_n |t|^3}{\sqrt{n}} \Big\{\frac{1}{2n}\sum_{j=1}^n f({\bm X}_j,{\bm X}_j^\prime, \bm Y_j, \bm Z_j)^2 + \frac{1}{2n}\sum_{j=1}^n f({\bm X}_j^\prime,{\bm X}_j, \bm Y_j, \bm Z_j)^2\Big\}\\
        &~~~~~~ + \frac{t^2}{2} \Big\{\frac{1}{n} \sum_{j=1}^n f({\bm X}_j,{\bm X}_j^\prime, \bm Y_j, \bm Z_j)^2 I[|f({\bm X}_j,{\bm X}_j^\prime, \bm Y_j, \bm Z_j)|\geq \epsilon S_n]\\
        &~~~~~~~~~~~~~~+ \frac{1}{n} \sum_{j=1}^n f({\bm X}_j^\prime,{\bm X}_j, \bm Y_j, \bm Z_j)^2 I[|f({\bm X}_j^\prime,{\bm X}_j, \bm Y_j, \bm Z_j)|\geq \epsilon S_n]\Big\}.
    \end{align*}
    Now, by the strong law of large numbers, one gets
    \begin{align*}
        & \frac{\epsilon S_n |t|^3}{\sqrt{n}} \Big\{\frac{1}{2n}\sum_{j=1}^n f({\bm X}_j,{\bm X}_j^\prime, \bm Y_j, \bm Z_j)^2 + \frac{1}{2n}\sum_{j=1}^n f({\bm X}_j^\prime,{\bm X}_j, \bm Y_j, \bm Z_j)^2\Big\}\\
        & \stackrel{a.s.}{\rightarrow} \epsilon |t|^3 \sqrt{\E[f({\bm U}_1, {\bm U}_1^\prime, \bm Y_1, \bm Z_1)^2]} \Big\{\frac{1}{2} \E[f({\bm X}_1,{\bm X}_1^\prime, \bm Y_1, \bm Z_1)^2] + \frac{1}{2} \E[f({\bm X}_1^\prime,{\bm X}_1, \bm Y_1, \bm Z_1)^2]\Big\}\\
        & = \epsilon |t|^3 \E[f({\bm U}_1, {\bm U}_1^\prime, \bm Y_1, \bm Z_1)^2]^{3/2},
    \end{align*}
    which is arbitrarily small (since $\epsilon$ is arbitrarily small). Again, using DCT, one has
    \begin{align*}
        & \E\left[\frac{1}{n} \sum_{j=1}^n f({\bm X}_j,{\bm X}_j^\prime, \bm Y_j, \bm Z_j)^2 I[|f({\bm X}_j,{\bm X}_j^\prime, \bm Y_j, \bm Z_j)|\geq \epsilon S_n] \right]\\
        & = \E\left[f({\bm X}_1,{\bm X}_1^\prime, \bm Y_j, \bm Z_j)^2 I[|f({\bm X}_1,{\bm X}_1^\prime, \bm Y_1, \bm Z_1)|\geq \epsilon S_n] \right] \rightarrow 0
    \end{align*}
    as $n$ grows to infinity. Therefore, $\frac{1}{n} \sum_{j=1}^n f({\bm X}_j,{\bm X}_j^\prime, \bm Y_j, \bm Z_j)^2 I[|f({\bm X}_j,{\bm X}_j^\prime, \bm Y_j, \bm Z_j)|\geq \epsilon S_n]$ converges to zero in probability as $n$ grows to infinity. Similarly, one can also show that $$\frac{1}{n} \sum_{j=1}^n f({\bm X}_j^\prime,{\bm X}_j, \bm Y_j, \bm Z_j)^2 I[|f({\bm X}_j^\prime,{\bm X}_j, \bm Y_j, \bm Z_j)|\geq \epsilon S_n]\stackrel{P}{\rightarrow} 0$$ 
    as $n$ grows to infinity. Therefore, by repeated application of the triangle inequality, one gets
    \begin{align*}
        & \left|\prod_{j=1}^n \E\Big[e^{it \frac{1}{\sqrt{n}} f({\bm U}_j,{\bm U}_j^\prime, \bm Y_j, \bm Z_j) }\big|\mathcal{D}^\prime\Big] - \prod_{j=1}^n \big(1-\frac{t^2}{2n} \E\big[f({\bm U}_j, {\bm U}_j^\prime, \bm Y_j, \bm Z_j)^2\big|\mathcal{D}^\prime\big]\big)\right|\\
        & \leq \sum_{j=1}^n \Big|\E\Big[e^{it \frac{1}{\sqrt{n}} f({\bm U}_j,{\bm U}_j^\prime, \bm Y_j, \bm Z_j) }\big|\mathcal{D}^\prime\Big] - \big(1-\frac{t^2}{2n} \E\big[f({\bm U}_j, {\bm U}_j^\prime, \bm Y_j, \bm Z_j)^2\big|\mathcal{D}^\prime\big]\big)\Big| \stackrel{P}{\rightarrow} 0,
    \end{align*}
    as $n$ grows to infinity. Before proceeding further let us note that for any $\delta>0$, 
    \begin{align*}
        &\Big|\frac{1}{n} \E\big[f({\bm U}_j,{\bm U}_j^\prime, \bm Y_j, \bm Z_j)^2\big|\mathcal{D}^\prime\big]\Big| \\
        & \leq \delta^2 +\frac{1}{n}\E\big[f({\bm U}_j, {\bm U}_j^\prime, \bm Y_j, \bm Z_j)^2 I[|f({\bm U}_j, {\bm U}_j^\prime, \bm Y_j, \bm Z_j)|\geq \delta\sqrt{n}]\big|\mathcal{D}^\prime\big]\\
        & = \delta^2 + \frac{1}{2n}f({\bm X}_j, {\bm X}_j^\prime, \bm Y_j, \bm Z_j)^2 I[|f({\bm X}_j, {\bm X}_j^\prime, \bm Y_j, \bm Z_j)|\geq \delta\sqrt{n}]\\
        &~~~~~~~~~~~~+\frac{1}{2n}f({\bm X}_j^\prime, {\bm X}_j, \bm Y_j, \bm Z_j)^2 I[|f({\bm X}_j^\prime, {\bm X}_j, \bm Y_j, \bm Z_j)|\geq \delta\sqrt{n}].
    \end{align*}
    \begin{align*}
        \text{So,  } &
    \max_{1\leq j\leq n} \Big|\frac{1}{n} \E\big[f({\bm U}_j, {\bm U}_j^\prime, \bm Y_j, \bm Z_j)^2\big|\mathcal{D}^\prime\big]\Big|\\
        & \leq \delta^2 + \frac{1}{2n}\sum_{j=1}^n f({\bm X}_j, {\bm X}_j^\prime, \bm Y_j, \bm Z_j)^2 I[|f({\bm X}_j, {\bm X}_j^\prime, \bm Y_j, \bm Z_j)|\geq \delta\sqrt{n}] \\
        &~~~~~~~~+ \frac{1}{2n}\sum_{j=1}^n f({\bm X}_j^\prime, {\bm X}_j, \bm Y_j, \bm Z_j)^2 I[|f({\bm X}_j^\prime, {\bm X}_j, \bm Y_j, \bm Z_j)|\geq \delta\sqrt{n}].
    \end{align*}
    Since, $\delta^2$ is of arbitrary and the other term is of order $o_p(1)$, one gets 
    $$\max_{1\leq j\leq n} \Big|\frac{1}{n} \E\big[f({\bm U}_j, {\bm U}_j^\prime, \bm Y_j, \bm Z_j)^2\big|\mathcal{D}^\prime\big]\Big|\stackrel{P}{\rightarrow} 0$$ as $n \rightarrow \infty$. Also note that
    \begin{align*}
        & \left|\prod_{j=1}^n \exp\Big\{-\frac{t^2}{2n} \big(\frac{1}{2} f({\bm X}_j, {\bm X}_j^\prime, \bm Y_j, \bm Z_j)^2 + \frac{1}{2} f({\bm X}_j^\prime, {\bm X}_j, \bm Y_j, \bm Z_j)^2\big)\Big\}\right. \\
        &\hspace{1in}- \left.\prod_{j=1}^n \Big\{1-\frac{t^2}{2n} \big(\frac{1}{2} f({\bm X}_j, {\bm X}_j^\prime, \bm Y_j, \bm Z_j)^2 + \frac{1}{2} f({\bm X}_j^\prime, {\bm X}_j, \bm Y_j, \bm Z_j)^2\big)\Big\}\right|\\
        & \leq \sum_{j=1}^n \left|\exp\Big\{-\frac{t^2}{2n} \big(\frac{1}{2} f({\bm X}_j, {\bm X}_j^\prime, \bm Y_j, \bm Z_j)^2 + \frac{1}{2} f({\bm X}_j^\prime, {\bm X}_j, \bm Y_j, \bm Z_j)^2\big)\Big\}\right. \\
        & \hspace{1in}\left.- \Big\{1-\frac{t^2}{2n} \big(\frac{1}{2} f({\bm X}_j, {\bm X}_j^\prime, \bm Y_j, \bm Z_j)^2 + \frac{1}{2} f({\bm X}_j^\prime, {\bm X}_j, \bm Y_j, \bm Z_j)^2\big)\Big\}\right|\\
        & \leq \sum_{j=1}^n \left|\frac{t^2}{4n} \big( f({\bm X}_j, {\bm X}_j^\prime, \bm Y_j, \bm Z_j)^2 + f({\bm X}_j^\prime, {\bm X}_j, \bm Y_j, \bm Z_j)^2\big)\right|^2 \\
        &~~~~~~~\times\exp\Big\{\frac{t^2}{4n} \big( f({\bm X}_j, {\bm X}_j^\prime, \bm Y_j, \bm Z_j)^2 + f({\bm X}_j^\prime, {\bm X}_j, \bm Y_j, \bm Z_j)^2\big)\Big\}\\
        &\leq \exp\Big\{\max_{1\leq j\leq n}\frac{t^2}{4n} \big( f({\bm X}_j, {\bm X}_j^\prime, \bm Y_j, \bm Z_j)^2 + f({\bm X}_j^\prime, {\bm X}_j, \bm Y_j, \bm Z_j)^2\big)\Big\}\\
        &~~~~~~~\times\max_{1\leq j\leq n}\frac{t^2}{4n} \big( f({\bm X}_j, {\bm X}_j^\prime, \bm Y_j, \bm Z_j)^2 + f({\bm X}_j^\prime, {\bm X}_j, \bm Y_j, \bm Z_j)^2\big)\\
        & ~~~~~~~~~~~~~~~~~~~~~\times\sum_{j=1}^n \frac{t^2}{4n} \big( f({\bm X}_j, {\bm X}_j^\prime, \bm Y_j, \bm Z_j)^2 + f({\bm X}_j^\prime, {\bm X}_j, \bm Y_j, \bm Z_j)^2\big).
    \end{align*}
    Clearly, this converges to $0$ as $n$ grows to infinity. Therefore,
    \begin{align}\label{eq:conditional-CLT}
        \prod_{j=1}^n \E\Big[e^{it \frac{1}{\sqrt{n}} f({\bm U}_j,{\bm U}_j^\prime, \bm Y_j, \bm Z_j) }\big|\mathcal{D}^\prime\Big] & = \exp\Big\{-\frac{t^2}{2} \frac{1}{n}\sum_{j=1}^n \big(f({\bm X}_j,{\bm X}_j^\prime, \bm Y_j, \bm Z_j)^2+f({\bm X}_j^\prime,{\bm X}_j, \bm Y_j, \bm Z_j)^2\big)\Big\} + o_P(1)\nonumber\\
        & = \exp\Big\{-\frac{t^2}{2} \E[f({\bm U}_1,{\bm U}_1^\prime, \bm Y_1, \bm Z_1)^2]\Big\} + o_P(1)
    \end{align}
    This completes the proof. 
\end{proof}
}

\begin{lemma}
    Let $F$ and $G$ be two probability distributions on $\R^d$ and assume $F_\delta = (1-\delta)F+\delta G$. Then, for any $\delta \in(0,1)$, we have 
    $$\zeta_\sigma(F_\delta) = (1-\delta)^2 \zeta_\sigma(F)+\delta^2 \zeta_\sigma(G) + 2\delta(1-\delta) \zeta_\sigma^\prime (G,F)$$
    where 
    $\zeta_\sigma^\prime(G,F) = \E\{K({\bm V}_1,{\bm V}_2)\}+\E\{K({\bm V}_1^\prime, {\bm V}_2^\prime)\}-\E\{K({\bm V}_1,{\bm V}_2^\prime)\}-\E\{K({\bm V}_1^\prime,{\bm V}_2)\},$ 
    ${\bm V}_1\sim G$ and ${\bf Y}_2\sim F$ are independent and ${\bm V}_1^\prime$ and ${\bm V}_2^\prime$ are their respective null exchangeable pairs and $K({\bf x},{\bf y}) = \exp\{-\frac{\sigma^2}{2}\|{\bf x}-{\bf y}\|^2\}$.
    \label{lem:relation-contamination}
\end{lemma}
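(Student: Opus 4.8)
The plan is to reduce everything to the bilinearity of the kernel functional behind Theorem 2.2. For probability measures $P,Q$ on $\R^d$ write $\mu(P,Q)=\E[K(A,B)]$ where $A\sim P$, $B\sim Q$ are independent and $K(\bm x,\bm y)=\exp\{-\tfrac{\sigma^2}{2}\|\bm x-\bm y\|^2\}$; since $K$ is bounded, $\mu$ is well defined and extends to a symmetric bilinear form on finite signed measures. Denote by $P^{\circ}$ the CI-ified law of $P$, i.e.\ the law of $(\bm X',\bm Y,\bm Z)$ with $\bm X'\mid\bm Z\stackrel{D}{=}\bm X\mid\bm Z$ and $\bm X'\indpt\bm Y\mid\bm Z$. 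I would first record that the representation in Theorem 2.2 can be rewritten as $\zeta_\sigma(P)=\mu(P,P)+\mu(P^{\circ},P^{\circ})-2\mu(P,P^{\circ})=\mu(P-P^{\circ},P-P^{\circ})$: the CI variants $\bm V_1',\bm V_2'$ appearing there are independent and each distributed as $P^{\circ}$, and $\bm V_1'$ is independent of $\bm V_2$, so each of the three expectations is one of the $\mu$-terms. The same bookkeeping gives $\zeta_\sigma'(G,F)=\mu(G,F)+\mu(G^{\circ},F^{\circ})-\mu(G,F^{\circ})-\mu(G^{\circ},F)=\mu(G-G^{\circ},F-F^{\circ})$.

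The one genuinely substantive step is to show that CI-ification commutes with mixing, namely $(F_\delta)^{\circ}=(1-\delta)F^{\circ}+\delta G^{\circ}$. Here I would use the standing model-X convention that the conditional law $\bm X\mid\bm Z$ is a fixed known kernel shared by $F$ and $G$; a one-line Bayes-rule computation then shows the conditional $\bm X\mid\bm Z$ under $F_\delta=(1-\delta)F+\delta G$ is that same kernel, while the $(\bm Y,\bm Z)$-marginal of $F_\delta$ is the $(1-\delta){:}\,\delta$ mixture of those of $F$ and $G$. Since forming the CI variant means nothing more than resampling $\bm X'$ from this common kernel given $\bm Z$ and keeping $(\bm Y,\bm Z)$ untouched, the output is exactly $(1-\delta)F^{\circ}+\delta G^{\circ}$.

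With these in hand the result drops out: $F_\delta-(F_\delta)^{\circ}=(1-\delta)(F-F^{\circ})+\delta(G-G^{\circ})$, so expanding $\zeta_\sigma(F_\delta)=\mu\big(F_\delta-(F_\delta)^{\circ},\,F_\delta-(F_\delta)^{\circ}\big)$ bilinearly yields $(1-\delta)^2\mu(F-F^{\circ},F-F^{\circ})+\delta^2\mu(G-G^{\circ},G-G^{\circ})+2\delta(1-\delta)\mu(F-F^{\circ},G-G^{\circ})$, which by the identities above is precisely $(1-\delta)^2\zeta_\sigma(F)+\delta^2\zeta_\sigma(G)+2\delta(1-\delta)\zeta_\sigma'(G,F)$, using the symmetry of $\mu$ for the cross term. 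If one prefers to avoid the abstract functional, the same computation can be carried out concretely by attaching Bernoulli$(\delta)$ component labels to the two sample points $\bm V_1,\bm V_2$ and their variants, conditioning on the labels, and summing the four $(F/G)\times(F/G)$ blocks; that is the version I would likely write out in full. The only place care is needed is the commutation step of the second paragraph: it genuinely requires $F$ and $G$ to share the conditional $\bm X\mid\bm Z$, since otherwise mixing distorts the kernel from which the variant is drawn and the clean quadratic identity fails.
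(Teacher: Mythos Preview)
Your proof is correct and follows essentially the same route as the paper's: both rely on the identity $(F_\delta)^{\circ}=(1-\delta)F^{\circ}+\delta G^{\circ}$ and then expand the three kernel expectations under the mixture, with your bilinear-form packaging $\zeta_\sigma(P)=\mu(P-P^{\circ},P-P^{\circ})$ being a cleaner abstraction of the paper's term-by-term bookkeeping. You are in fact more careful than the paper in explicitly flagging that the commutation step requires $F$ and $G$ to share the conditional law $\bm X\mid\bm Z$; the paper asserts $\mu_+=(1-\delta)F'+\delta G'$ without comment.
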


\begin{proof}[\bf Proof of Lemma \ref{lem:relation-contamination}:]
    Recall that $\zeta_\sigma(\Pr)$ can be expressed as 
    $$\zeta_\sigma(\Pr) = \E\{K({\bm V}_1,{\bm V}_2)\}+\E\{K({\bm V}_1^\prime, {\bm V}_2^\prime)\}-2\E\{K({\bm V}_1,{\bm V}_2^\prime)\},$$
    where ${\bm V}_1 = (\bm X_1,\bm Y_1, \bm Z_1),{\bm V}_2 = (\bm X_2,\bm Y_2, \bm Z_2)$ are independent observations from $\Pr$, ${\bm V}_1^\prime, {\bm V}_2^\prime$ are their respective null exchangeable pairs, and $K({\bf x},{\bf y}) = \exp\big\{-\frac{\sigma^2}{2}\|{\bf x}-{\bf y}\|^2\big\}$. Now if {$F_\delta = (1-\delta) F+ \delta G$ }($0<\delta<1$), then
    \begin{align*}
        & \E\{K({\bm V}_1,{\bm V}_2)\} = \int K({\bm v}_1,{\bm v}_2) {\mathrm d}F_\delta({\bm v}_1) {\mathrm d}F_\delta({\bm v}_2)\\
        & = (1-\delta)^2 \int K({\bm v}_1,{\bm v}_2) {\mathrm d}F({\bm v}_1) {\mathrm d}F({\bm v}_2) + 2\delta (1-\delta) \int K({\bm v}_1,{\bm v}_2) {\mathrm d}G({\bm v}_1) {\mathrm d}F({\bm v}_2)\\
        & \hspace{2in}+\delta^2 \int K({\bm v}_1,{\bm v}_2) {\mathrm d}G({\bm v}_1) {\mathrm d}G({\bm v}_2).
    \end{align*}
    If $\mu_+$ denotes the distribution of $\bm V^\prime$ when $\bm V\sim F_\delta$ then $\mu_+ = (1-\delta)F^\prime + \delta G^\prime$, where $F^\prime$ the distribution of the null exchangeable pair of a sample from $F$ and $G^\prime$ is the same for a sample from $G$. Then
    \begin{align*}
        \E\{K({\bm V}_1^\prime,{\bm V}_2^\prime)\} = &\int K(\bm v^\prime_1,{\bm v^\prime}_2) \mathrm d\mu_+({\bm v^\prime}_1) \mathrm d\mu_+({\bm v^\prime}_2) \\
         = & ~(1-\delta)^2 \int K(\bm v^\prime_1,\bm v^\prime_2) \mathrm dF^\prime({\bm v^\prime}_1)\mathrm dF^\prime({\bm v^\prime}_2) + \delta^2 \int K(\bm v^\prime_1,\bm v^\prime_2) \mathrm dG^\prime({\bm v^\prime}_1)\mathrm dG^\prime({\bm v^\prime}_2) \\
        &+ 2\delta (1-\delta) \int K(\bm v^\prime_1,\bm v^\prime_2) \mathrm dG^\prime({\bm v^\prime}_1)\mathrm dF^\prime({\bm v^\prime}_2) 
    \end{align*}
    \begin{align*}
        \mbox{and }& \E\{K({\bm V}_1,{\bm V}_2^\prime)\} = \int K(\bm v_1,\bm v^\prime_2) \mathrm dF_{\delta}(\bm v_1)\mathrm d\mu_+({\bm v^\prime}_1) \\
        & = (1-\delta)^2 \int K(\bm v_1,\bm v^\prime_2) \mathrm dF({\bm v}_1)\mathrm dF^\prime({\bm v^\prime}_2) +  \delta (1-\delta) \int K({\bm v}_1,{\bm v^\prime}_2)\mathrm dG({\bm v}_1)\mathrm dF^\prime({\bm v^\prime}_2)\\
        & ~~~~~~~~+ \delta (1-\delta) \int K({\bm v}_1,{\bm v^\prime}_2) \mathrm dF({\bm v}_1)\mathrm dG^\prime({\bm v^\prime}_2) +\delta^2 \int K({\bm v}_1,{\bm v^\prime}_2) \mathrm dG({\bm v}_1) \mathrm dG^\prime({\bm v^\prime}_2).
    \end{align*}
    Therefore, if {$F_\delta = (1-\delta) F + \delta G$}, then,
    $$\zeta_\sigma({F_\delta}) = (1-\delta)^2 \zeta_\sigma(F)+\delta^2 \zeta_\sigma(G) + 2\delta(1-\delta) \zeta_\sigma^\prime (G,F)$$
    where 
    $$\zeta_\sigma^\prime(G,F) = \E\{K({\bm V}_1,{\bm V}_2)\}+\E\{K({\bm V}_1^\prime, {\bm V}_2^\prime)\}-\E\{K({\bm V}_1,{\bm V}_2^\prime)\}-\E\{K({\bm V}_1^\prime,{\bm V}_2)\}$$
    where ${\bm V}_1\sim G$ and ${\bf Y}_2\sim F$ are independent and ${\bm V}_1^\prime$ and ${\bm V}_2^\prime$ are their respective null exchangeable pairs. 
\end{proof}

\begin{remark}
    Note that if $\zeta_\sigma(G) = 0$ then $\bm V_1\stackrel{D}{=}\bm V_2^\prime$, which leads to $\zeta_\sigma^\prime(G,F) = 0$. Hence, if $\zeta(G)=0$, then $\zeta_\sigma(F_\delta) = (1-\delta)^2\zeta_\sigma(F)$.
\end{remark}

\begin{proof}[\bf Proof of Theorem \ref{thm:local-limit-per}:]
       Let $\bm V_1 = ({\bm X}_1, \bm Y_1, \bm Z_1),\ldots, \bm V_n = ({\bm X}_n, \bm Y_n, \bm Z_n) \stackrel{iid}{\sim}F_{1-\beta_n n^{-1/2}} = (1-\frac{\beta_n}{\sqrt{n}}) G+ \frac{\beta_n}{\sqrt{n}} F$, where $\zeta(G) = 0$, $\zeta(F)>0$ and $\beta_n\rightarrow\beta\in[0,\infty)$. Let $\bm V_1^\prime, \ldots, \bm V_n^\prime$ be their respective null exchangeable pairs. Also, assume that $\pi_1,\pi_2,\ldots,\pi_n\stackrel{iid}{\sim}\textnormal{Bernoulli}(0.5)$. The resampled test statistic $\hat\zeta_{n,\sigma}(\pi)$ can be written as
    $$\hat\zeta_{n,\sigma}(\pi)=\frac{1}{n(n-1)}\sum_{1\leq i\not=j\leq n} g\big(({\bm U}_i,{\bm U}_i^\prime),({\bm U}_j,{\bm U}_j^\prime)\big),$$
    where ${\bm U}_i = \pi_i {\bm X}_i+(1-\pi_i){\bm X}_i^\prime$, ${\bm U}_i^\prime = (1-\pi_i){\bm X}_i+\pi_i {\bm X}_i^\prime$ and $g(\cdot, \cdot)$ as in (5) in the main article. First, we will prove that under $F_{1-\beta_n/\sqrt{n}}$ where $\beta_n\rightarrow\beta\in[0,\infty)$, we have
    $$n\hat\zeta_{n,\sigma}(\pi) \stackrel{D}{\rightarrow}\sum_{i=1}^\infty\lambda_i (Z_i^2-1)$$
    weakly in probability, where $Z_i$s are i.i.d. standard normal random variables independent of $\bm V_i$s and $\lambda_i$s are the eigenvalues of the integral equation 
    $$\E\left\{g\big((\bm x, \bm x^\prime, \bm y, \bm z),(\bm X, \bm X^\prime, \bm Y, \bm Z)\big)\right\} = \lambda \psi(\bm x, \bm x^\prime, \bm y, \bm z),$$
    when $(\bm X,\bm Y,\bm Z)\sim G$ and $(\bm X^\prime, \bm Y,\bm Z)$ is its null exchangeable pair.
    
    Denote, $\mathcal{D}^\prime = \{(\bm X_i, \bm X_i^\prime, \bm Y_i, \bm Z_i)\}_{1\leq i\leq n}$ as the augmented data set as before. So, conditioned on $\mathcal{D}^\prime$, the randomness within $\hat\zeta_{n,\sigma}(\pi)$ comes from $\pi_1,\ldots, \pi_n$ only. Then, one gets

    \begin{align*}
        & \E\Big[\exp\big\{-\frac{\|(\bm U_1,\bm U_1^\prime, \bm Y_1, \bm Z_1) - (\bm U_2,\bm U_2^\prime, \bm Y_2, \bm Z_2)\|^2}{2d}\big\}\big|(\bm U_1, \bm U_1^\prime, \bm Y_1, \bm Z_1), \mathcal{D}^\prime\Big]\\
        & = \frac{1}{2} \sum_{\pi\in\{0,1\}} \left[\exp\big\{-\frac{\|(\bm U_1,\bm U_1^\prime, \bm Y_1, \bm Z_1) - \pi (\bm X_2, \bm X_2^\prime, \bm Y_2, \bm Z_2) - (1-\pi) (\bm X_2^\prime, \bm X_2, \bm Y_2, \bm Z_2)\|^2} {2d}\big\} \right] = \eta \text{ (say)}.
    \end{align*}
    (Note that here the randomness is due to $\pi_2$ only). Similarly, one can also get
    \begin{align*}
        & \E\Big[\exp\big\{-\frac{\|(\bm U_1,\bm U_1^\prime, \bm Y_1, \bm Z_1) - (\bm U_2^\prime,\bm U_2, \bm Y_2, \bm Z_2)\|^2}{2d}\big\}\big|(\bm U_1, \bm U_1^\prime, \bm Y_1, \bm Z_1), \mathcal{D}^\prime\Big]\\
        & = \E\Big[\exp\big\{-\frac{\|(\bm U_1^\prime,\bm U_1, \bm Y_1, \bm Z_1) - (\bm U_2,\bm U_2^\prime, \bm Y_2, \bm Z_2)\|^2}{2d}\big\}\big|(\bm U_1, \bm U_1^\prime, \bm Y_1, \bm Z_1), \mathcal{D}^\prime\Big]\\
        & = \E\Big[\exp\big\{-\frac{\|(\bm U_1^\prime,\bm U_1, \bm Y_1, \bm Z_1) - (\bm U_2^\prime,\bm U_2, \bm Y_2, \bm Z_2)\|^2}{2d}\big\}\big|(\bm U_1, \bm U_1^\prime, \bm Y_1, \bm Z_1), \mathcal{D}^\prime\Big] = \eta.
    \end{align*}
    Hence, one gets 
    \begin{align}
        \E[g\big((\bm U_1,\bm U_1^\prime, \bm Y_1, \bm Z_1),(\bm U_2,\bm U_2^\prime, \bm Y_1, \bm Z_1)\big)\big|(\bm U_1, \bm U_1^\prime, \bm Y_1, \bm Z_1),\mathcal{D}^\prime] = 0.\label{eq:conditional-degeneracy}
    \end{align}
    Therefore, the limiting conditional distribution of $n\hat\zeta_{n,\sigma}(\pi)$ should be evaluated as in the case of unconditional degenerate U-statistics. Using the eigen decomposition of $g(\cdot,\cdot)$ with respect to $G$, one gets
    $$g\big((\bm x_1, \bm x_1^\prime,\bm y_1, \bm z_1),(\bm x_2, \bm x_2^\prime, \bm y_2,\bm z_2)\big) = \sum_{i=1}^\infty \lambda_i \psi_i(\bm x_1,\bm x_1^\prime, \bm y_1,\bm z_1) \psi_i(\bm x_2,\bm x_2^\prime, \bm y_2,\bm z_2),$$
    where $\lambda_i$ s and $\psi_i$s are as in Proposition 2.3. For any $K\in\mathbb{N}$, define $$g_K\big((\bm x_1,\bm x_1^\prime,\bm y_1,\bm z_1),(\bm x_2,\bm x_2^\prime,\bm y_2,\bm z_2)\big) = \sum_{i=1}^K \lambda_i \psi_i(\bm x_1,\bm x_1^\prime, \bm y_1,\bm z_1) \psi_i(\bm x_2,\bm x_2^\prime, \bm y_2,\bm z_2).$$ 
    Let $U_n(f) := \frac{1}{(n-1)} \sum_{1\leq i \not = j\leq n} f\big((\bm U_i,\bm U_i^\prime, \bm Y_i, \bm Z_i),(\bm U_j,\bm U_j^\prime, \bm Y_j, \bm Z_j)\big)$. Then,
    \begin{align*}
        & \E\big[\big(U_n(g-g_K)\big)^2\big|\mathcal{D}^\prime\big]\\
        & = \frac{1}{(n-1)^2} \sum_{1\leq i_1\not=j_1\leq n} \sum_{1\leq i_2\not=j_2\leq n}\E\left[(g-g_K)\big((\bm U_{i_1},\bm U_{i_1}^\prime, \bm Y_{i_1}, \bm Z_{i_1}),(\bm U_{j_1},\bm U_{j_1}^\prime, \bm Y_{j_1}, \bm Z_{j_1})\big)\right.\\
        &\hspace{2.5in}\left.(g-g_K)\big((\bm U_{i_2},\bm U_{i_2}^\prime, \bm Y_{i_2}, \bm Z_{i_2}),(\bm U_{j_2},\bm U_{j_2}^\prime, \bm Y_{j_2}, \bm Z_{j_2})\big)\big| \mathcal{D}^\prime\right] \\
        & =  \frac{2}{(n-1)^2} \sum_{1\leq i_1\not= j_1\leq n} \E\Big[\Big((g-g_K)\big((\bm U_{i_1},\bm U_{i_1}^\prime, \bm Y_{i_1}, \bm Z_{i_1}),(\bm U_{j_1},\bm U_{j_1}^\prime, \bm Y_{j_1}, \bm Z_{j_1})\big)\Big)^2\big| \mathcal{D}^\prime\Big] ~~~~\text{(by \eqref{eq:conditional-degeneracy})}\\
        & = \frac{2}{(n-1)^2} \sum_{1\leq i_1\not= j_1\leq n} \frac{1}{4}\sum_{\pi_{i_1}, \pi_{j_1}\in\{0,1\}}\Big((g-g_K)\big((\pi_{i_1}\bm W_{i_1} + (1-\pi_{i_1})\bm W_{i_1}^\prime,\pi_{i_1}\bm W_{i_1}^\prime + (1-\pi_{i_1})\bm W_{i_1}), \\
        &\hspace{2.7in} (\pi_{j_1}\bm W_{j_1} + (1-\pi_{j_1})\bm W_{j_1}^\prime,\pi_{j_1}\bm W_{j_1}^\prime + (1-\pi_{j_1})\bm W_{j_1})\big)\Big)^2\\
        & = \frac{1}{2} [\bm \Sigma_I + \bm \Sigma_{II} + \bm \Sigma_{III} + \bm \Sigma_{IV}],   
    \end{align*}
    where $\bm W_j = (\bm X_j, \bm X_j^\prime, \bm Y_j, \bm Z_j)$ and $\bm W_j^\prime = (\bm X_j^\prime, \bm X_j, \bm Y_j, \bm Z_j)$ and
    \begin{align*}
        \bm \Sigma_I & = \frac{1}{(n-1)^2}  \sum_{1\leq i_1\not= j_1\leq n} \Big((g-g_K)\big(\bm W_{i_1},\bm W_{j_1}\Big)^2,\\
        \bm \Sigma_{II} & = \frac{1}{(n-1)^2}  \sum_{1\leq i_1\not= j_1\leq n} \big((g-g_K)\big(\bm W_{i_1}^\prime ,\bm W_{j_1}\big)\big)^2,\\
        \bm \Sigma_{III} & = \frac{1}{(n-1)^2}  \sum_{1\leq i_1\not= j_1\leq n} \big((g-g_K)\big(\bm W_{i_1},\bm W_{j_1}^\prime\big)^2, \textnormal{ and}\\
        \bm \Sigma_{IV} & = \frac{1}{(n-1)^2}  \sum_{1\leq i_1\not= j_1\leq n} \big((g-g_K)\big(\bm W_{i_1}^\prime,\bm W_{j_1}^\prime\big)^2.
    \end{align*}
    Clearly, by the strong consistency of U-statistics and contiguity of $F_{1-\beta_n/\sqrt{n}}$ with $G$, one gets
    $$\E\big[\big(U_n(g-g_K)\big)^2\big|\mathcal{D}^\prime\big] \stackrel{a.s.}{\rightarrow}2 \E\left[\Big((g-g_K)\big((\bm U_{1} ,\bm U_{1}^\prime, \bm Y_1, \bm Z_1),(\bm U_{2},\bm U_{2}^\prime, \bm Y_2, \bm Z_2\big)\Big)^2\right]$$
    as $n$ grows to infinity. The limiting quantity converges to zero as $K$ diverges to infinity. This ensures the closeness of the limiting distribution of $U_n(g)$ and $U_n(g_K)$ when $K$ is large.
    
    Now, let us focus on the limiting distribution of $U_n(g_K)$. First, let us find the joint limiting distribution of $\frac{1}{\sqrt{n}}\sum_{i=1}^n\Big(\psi_1\big(\bm U_i,\bm U_i^\prime, \bm Y_i, \bm Z_i\big), \ldots, \psi_K\big((\bm U_i,\bm U_i^\prime, \bm Y_i, \bm Z_i)\big)\Big)$ conditioned on $\mathcal{D}^\prime$, where $\psi_i$s are the eigenvectors as defined before. By the Cramer-Wold device, it is enough to focus on the conditional limit distribution of
    $$\frac{1}{\sqrt{n}}\sum_{i=1}^n \left[t_1\psi_1\big((\bm U_1,\bm U_1^\prime, \bm Y_1, \bm Z_1)\big) + \ldots + t_K\psi_K\big((\bm U_K,\bm U_K^\prime, \bm Y_K, \bm Z_K)\big)\right]$$
    for some real $t_1,\ldots,t_K$. Note that since $$g\big(({\bm x}_1,{\bm x}_1^\prime, \bm y_1, \bm z_1),({\bm x}_2,{\bm x}_2^\prime, \bm y_2, \bm z_2)\big) = -g\big(({\bm x}_1^\prime,{\bm x}_1, \bm y_1, \bm z_1),({\bm x}_2,{\bm x}_2^\prime, \bm y_2, \bm z_2)\big),$$ one gets
    \begin{align}
    \label{eq:anti-symm-phi}
        \psi_k({\bm x}_1, {\bm x}_1^\prime, \bm y_1, \bm z_1) & = \int g\big(({\bm x}_1,{\bm x}_1^\prime, \bm y_1, \bm z_1),({\bm x}_2,{\bm x}_2^\prime, \bm y_2, \bm z_2)\big) \psi_i({\bm x}_2,{\bm x}_2^\prime, \bm y_2, \bm z_2) \mathrm d \nu_+({\bm x}_2,{\bm x}_2^\prime, \bm y_2, \bm z_2)\nonumber\\
        & = - \int g\big(({\bm x}_1^\prime,{\bm x}_1, \bm y_1, \bm z_1),({\bm x}_2,{\bm x}_2^\prime, \bm y_2, \bm z_2)\big) \mathrm d \nu_+({\bm x}_2,{\bm x}_2^\prime, \bm y_2, \bm z_2) = -\psi_i({\bm x}_1, {\bm x}_1^\prime, \bm y_1, \bm z_1),
    \end{align}
    for $i=1,2,\ldots,K$, where $\nu_+$ denotes the joint distribution of $(\bm X_1, \bm X_1^\prime, \bm Y_1, \bm Z_1)$ when $(\bm X_1, \bm Y_1, \bm Z_1)\sim G$ and $(\bm X_1^\prime,\bm Y_1, \bm Z_1)$ is its null exchangeable pair. So, 
    \begin{align*}
        &\E[\psi_i(\bm U_1, \bm U_1^\prime, \bm Y_1, \bm Z_1)\big| \mathcal{D}] = \frac{1}{2}\big(\psi_i(\bm X_1, \bm X_1^\prime, \bm Y_1, \bm Z_1) + \psi_i(\bm X_1^\prime, \bm X_1, \bm Y_1, \bm Z_1)\big) \\
        & = \frac{1}{2}\big(\psi_i(\bm X_1, \bm X_1^\prime, \bm Y_1, \bm Z_1) - \psi_i(\bm X_1, \bm X_1^\prime), \bm Y_1, \bm Z_1\big) = 0, \text{ 
 and}\\
       & \E[\psi_i(\bm U_1, \bm U_1^\prime,\bm Y_1, \bm Z_1)^2]\\
       & = \Big(1-\frac{\beta_n}{\sqrt{n}}\Big) \int \psi_i(\xvec,\xvec^\prime, \bm y, \bm z)^2 \mathrm d\nu_+ (\xvec, \xvec^\prime, \bm y, \bm z) + \frac{\beta_n}{\sqrt{n}} \int \psi_i(\xvec,\xvec^\prime, \bm y, \bm z)^2 \mathrm d\nu_+^\ast (\xvec, \xvec^\prime, \bm y, \bm z)\\
        & \hspace{2.0in} \rightarrow \int \psi_i(\xvec,\xvec^\prime, \bm y, \bm z)^2 \mathrm d\nu_+ (\xvec, \xvec^\prime, \bm y, \bm z) = 1, \textnormal{ as $n\rightarrow \infty$,}.
    \end{align*} 
    where $\nu_+$ and $\nu_+^\ast$ are the joint distribution of $({\bm X}_1,{\bm X}_1^\prime, \bm Y, \bm Z)$ when $({\bm X}_1, \bm Y_1, \bm Z_1)\sim G$ and $({\bm X}_1, \bm Y_1, \bm Z_1)\sim F$, respectively and $(\bm X_1^\prime, \bm Y_1, \bm Z_1)$ is its null exchangeable pair. Then, following similar arguments as in Lemma \ref{conditional-CLT}, under $F_{1-\beta_n/\sqrt{n}}$, one gets
    \begin{align*}
        \prod_{j=1}^n \E\Big[e^{it \frac{1}{\sqrt{n}} \psi_i({\bm U}_j,{\bm U}_j^\prime, \bm Y_j, \bm Z_j) }\big|\mathcal{D}^\prime\Big] & = \exp\Big\{-\frac{t^2}{2} \frac{1}{n}\sum_{j=1}^n \big(\psi_i({\bm X}_j,{\bm X}_j^\prime, \bm Y_j, \bm Z_j)^2+\psi_i({\bm X}_j^\prime,{\bm X}_j, \bm Y_j, \bm Z_j)^2\big)\Big\} + o_P(1)\\
        & = \exp\Big\{-\frac{t^2}{2} \E[\varphi({\bm U}_1,{\bm U}_1^\prime, \bm Y_1, \bm Z_1)^2]\Big\} + o_P(1).
    \end{align*}
    The probability convergence of $\frac{1}{n}\sum_{j=1}^n \big(\psi_i({\bm X}_j,{\bm X}_j^\prime, \bm Y_j, \bm Z_j)^2+\psi_i({\bm X}_j^\prime,{\bm X}_j, \bm Y_j, \bm Z_j)^2\big)$ to $\E[\psi_i({\bm U}_1,{\bm U}_1^\prime, \bm Y_1, \bm Z_1)^2]$ also follows by the contiguity of $F_{1-\beta_n/\sqrt{n}}$ and $G$. Therefore, using simple algebraic arguments, one gets that conditioned on $\mathcal{D}^\prime$, 
    \begin{align*}
        &\frac{1}{\sqrt{n}}\sum_{i=1}^n \left[t_1\psi_1\big(\bm U_i,\bm U_i^\prime, \bm Y_i, \bm Z_i\big) + \ldots + t_K\psi_K\big(\bm U_i,\bm U_i^\prime, \bm Y_i, \bm Z_i\big)\right]\\ &\hspace{2.0in} \stackrel{D}{\rightarrow} \mathcal{N}_1\big(0, \sum_{\ell=1}^K t_\ell^2\E[\psi_\ell(\bm U_1, \bm U_1^\prime, \bm Y_1, \bm Z_1)^2]\big)
         =  \mathcal{N}_1\big(0, \sum_{\ell=1}^K t_\ell^2\big)
    \end{align*}
    in probability as $n$ grows to infinity. By simple application of weak law of large numbers, it is easy to see that conditioned on $\mathcal{D}^\prime$, for any $\ell\in \mathbb{N}$, $\frac{1}{n}\sum_{j=1}^n \psi_j(\bm U_j, \bm U_j^\prime, \bm Y_j, \bm Z_j)^2$ converges in probability to $\E[\psi_j(\bm U_1, \bm U_1^\prime)^2] = 1$ as $n$ grows to infinity. Now, by continuous mapping theorem one can conclude that
    $$U_n(g_K) = \sum_{\ell=1}^K \lambda_\ell \Big( \big( \frac{1}{\sqrt{n}} \sum_{j = 1}^n \psi_\ell(\bm U_j, \bm U_j^\prime) \big)^2  - \frac{1}{n} \sum_{j=1}^n \psi_{\ell}(\bm U_j, \bm U_j^\prime)^2\Big)\stackrel{D}{\rightarrow} \sum_{\ell=1}^K \lambda_\ell (Z_\ell ^2 - 1)$$
    as $n$ grows to infinity, where $Z_\ell$s are i.i.d. standard normal random variables independent of $\bm V_i$s and $\lambda_i$s are the eigenvalues of the integral equation
    $$\E\left\{g\big((\bm x, \bm x^\prime, \bm y, \bm z),(\bm X, \bm X^\prime, \bm Y, \bm Z)\big)\psi(\bm X, \bm X^\prime, \bm Y, \bm Z)\right\} = \lambda \psi(\bm x, \bm x^\prime, \bm y, \bm z),$$
where $g(\cdot, \cdot)$ is as defined in (5) in the main article and $\bm V = (\bm X, \bm Y, \bm Z)\sim G$ and $\bm V^\prime$ is the null exchangeable pair of $\bm V$. The rest of the arguments can be established using approximation arguments using characteristic functions of $U_n(g_K), U_n(g), \sum_{i=1}^K\lambda_i(Z_i^2-1)$ and $\sum_{i=1}^\infty\lambda_i(Z_i^2-1)$, as in Lemma 1 from \cite{banerjee2024twosam}. We also refer the interested readers to Chapter 12 of \cite{van2000asymptotic} or page 79 of \cite{lee2019u} for similar arguments. Now,

    \begin{itemize}
        \item[(a)] By Theorem~2.4, we have  
\[
n\hat\zeta_{n,\sigma} \xrightarrow{d} \sum_{i=1}^\infty \lambda_i (Z_i^2 - 1),
\]
where the random variables $\{Z_i\}$ are i.i.d. standard normal random variables. Moreover, under the condition $\beta_n \to 0$, we have established that the conditional distribution of $n\hat\zeta_\sigma(\pi)$ converges weakly, in probability, to the same limiting distribution $\sum_{i=1}^\infty \lambda_i ({Z_i^\prime}^2 - 1)$, where $Z_i^\prime$s are independent of $Z_i$s, but $\lambda_i$s are the same sequence of eigenvalues.

It follows that for any $\alpha \in (0,1)$, the conditional quantiles $\hat c_{1-\alpha}$ of the resampling distribution converge to the corresponding theoretical quantiles of the null distribution $\sum_{i=1}^\infty \lambda_i (Z_i^2 - 1)$. Hence,
\[
P\!\left(\hat\zeta_{n,\sigma} > \hat c_{1-\alpha}\right) \;\rightarrow\; \alpha,
\qquad n \to \infty.
\]

\item[(b)] By Theorem~3.1, under $F_{1-\beta_n/\sqrt{n}}$, where $\beta_n\rightarrow\beta\in(0,\infty)$, we have  
\[
n\hat\zeta_{n,\sigma} \xrightarrow{d} \sum_{i=1}^\infty \lambda_i \big((Z_i+\beta\E[\psi_i(\bm X_1,\bm X_1^\prime, \bm Y_1, \bm Z_1)])^2 - 1),
\]
where the random variables $\{Z_i\}$ are i.i.d. standard normal random variables and $\lambda_i$s are the eigenvalues of the integral equation 
$$\E\left\{g\big((\bm x, \bm x^\prime, \bm y, \bm z),(\bm X, \bm X^\prime, \bm Y, \bm Z)\big)\psi(\bm X, \bm X^\prime, \bm Y, \bm Z)\right\} = \lambda \psi(\bm x, \bm x^\prime, \bm y, \bm z),$$
where $g(\cdot, \cdot)$ is as defined (5) in the main article and $\bm V = (\bm X, \bm Y, \bm Z)\sim G$ and $\bm V^\prime$ is the null exchangeable pair of $\bm V$. Moreover, under the condition $\beta_n \to \beta$, we have established that the conditional distribution of $n\hat\zeta_\sigma(\pi)$ converges weakly, in probability, to $\sum_{i=1}^\infty \lambda_i ({Z_i^\prime}^2 - 1)$, where $Z_i^\prime$s are independent of $Z_i$s and $\lambda_i$s are as before.

It follows that for any $\alpha \in (0,1)$, the conditional quantiles $\hat c_{1-\alpha}$ of the resampling distribution converge to the corresponding theoretical quantiles $z_{1-\alpha}$ of the null distribution $\sum_{i=1}^\infty \lambda_i (Z_i^2 - 1)$. Hence,
\[
P\!\left(\hat\zeta_{n,\sigma} > \hat c_{1-\alpha}\right) \;\rightarrow\; P\left[\sum_{i=1}^\infty \lambda_i (Z_i+\beta^2\E[\psi_i(\bm X_1,\bm X_1^\prime, \bm Y_1, \bm Z_1)] - 1) >z_{1-\alpha}\right] = \mathcal{L}(\beta),
\qquad n \to \infty.
\]

\item[(c)] Using Lemma \ref{lem:relation-contamination} we get $\zeta_\sigma(F_{1-\beta_nn^{-1/2}}) = \frac{\beta_n^2}{n}\zeta_\sigma(F)$. Then by \eqref{eq:type-II-bound}, we get that under $F_{1-\beta_n/\sqrt{n}}$
\begin{align*}
P[\hat\zeta_{n,\sigma}<\hat c_{1-\alpha}] & \leq \frac{{n\choose 2}^{-1}\left[4(n-2) \zeta_\sigma(F_{1-\beta_nn^{-1/2}}) + 4\right]}{\big(\zeta_\sigma(F_{1-\beta_n/\sqrt{n}})-2((n-1)\alpha)^{-1}\big)^2}\\
& = \frac{{n\choose 2}^{-1}\left[4(n-2) \frac{\beta_n^2}{n} \zeta_\sigma(F) + 4\right]}{\big(\frac{\beta_n^2}{n}\zeta_\sigma(F)-2((n-1)\alpha)^{-1}\big)^2} = O(\beta_n^{-2})~~~\text{(using Lemma \ref{lem:relation-contamination})}.
\end{align*}
So, if $\beta_n\rightarrow\infty$ as $n$ diverges to infinity, $P[\hat\zeta_{n,\sigma}<\hat c_{1-\alpha}]$ (the Type-II error rate) converges to zero, i.e., the power of the test converges to one. 
\end{itemize}
This completes the proof.

\end{proof}

\begin{proof}[\bf Proof of Proposition \ref{prop:baic-prop}:]
    Recall that $\hat\zeta_{n,\sigma}$ is a U-statistics with the kernel
    $$g\big(({\bm x}_1,{\bm x}_1^\prime, \bm y_1, \bm z_1),({\bm x}_2,{\bm x}_2^\prime, \bm y_2, \bm z_2)\big) = K({\bm v}_1,{\bm v}_2) + K({\bm v}_1^\prime,{\bm v}_2^\prime) - K({\bm v}_1,{\bm v}_2^\prime) - K({\bm v}_1^\prime,{\bm v}_2),$$
    where $K({\bf x},{\bf y}) = \exp\{-\sigma^2\|{\bf x}-{\bf y}\|^2/2\}, \bm v_i = (\bm x_i, \bm y_i, \bm z_i)$ and $\bm v_i^\prime = (\bm x_i^\prime, \bm y_i, \bm z_i), i = 1,2$. Then, by the theory of U-statistics \citep[see p.12 in ][]{lee2019u} the unbiasedness of the estimator follows trivially and, one has
    \begin{align*}
        \var(\hat\zeta_{n,\sigma}) & = {n\choose 2}^{-1}\left[{2\choose 1}{n-2\choose 1} \var\Big(g_1\big({\bm X}_1,{\bm X}_1^\prime, \bm Y_1, \bm Z_1\big)\Big) \right.\\
        & ~~~~~~~~~~~~~~~~~~~~~~ \left.+ {2\choose 2}{n-2\choose 0} \var\Big(g\big(({\bm X}_1,{\bm X}_1^\prime, \bm Y_1, \bm Z_1),({\bm X}_2,{\bm X}_2^\prime, \bm Y_2, \bm Z_2)\big)\Big)\right]
    \end{align*}
    where,
    \begin{equation*}
    \begin{split}
        & g_1({\bm x}_1,{\bm x}_1^\prime, \bm y_1, \bm z_1)\\
        & = \E\{\exp(-\frac{\sigma^2}{2}\|{\bm v}_1-{\bm V}_2\|^2)\}-\E\{\exp(-\frac{\sigma^2}{2}\|{\bm v}_1-{\bm V}_2^\prime\|^2)\}\\
        &~~~~~~~~+\E\{\exp(-\frac{\sigma^2}{2}\|{\bm v}_1^\prime-{\bm V}_2^\prime\|^2)\}-\E\{\exp(-\frac{\sigma^2}{2}\|{\bm v}_1^\prime-{\bm V}_2\|^2)\},
    \end{split}
\end{equation*}    
$\bm v_1, \bm v_2$ are as defined before and $\bm V_1, \bm V_1^\prime$ are the associated random vector.
Since, $|g(\cdot,\cdot)|\leq 2$, $\var\Big(g\big(({\bm X}_1,{\bm X}_1^\prime, \bm Y_1, \bm Z_1),({\bm X}_2,{\bm X}_2^\prime, \bm Y_2, \bm Z_2)\big)\Big)$ is bounded by $4$. Whereas to bound the first term note that
    \begin{align*}
        g_1({\bm x}_1,{\bm x}_1^\prime, \bm y_1, \bm z_1) & = \E\{\int \exp(i\langle {\bf t}, {\bm v}_1-{\bm V}_2 \rangle) \phi_0({\bf t}) \mathrm d{\bf t}\} - \E\{\int \exp(i\langle {\bf t}, {\bm v}_1-{\bm V}_2^\prime \rangle) \phi_0({\bf t}) \mathrm d{\bf t}\}\\
        &\hspace{0.75in}-\E\{\int \exp(i\langle {\bf t}, {\bm v}_1^\prime-{\bm V}_2 \rangle) \phi_0({\bf t}) \mathrm d{\bf t}\}+\E\{\int \exp(i\langle {\bf t}, {\bm v}_1^\prime-{\bm V}_2^\prime \rangle) \phi_0({\bf t}) \mathrm d{\bf t}\}\tag*{\text{(where $\phi_0({\bf t})$ denotes the density of $N({\bf 0}, \sigma^2{\bf I}_d)$)}}\\
        & = \int \big(\exp(i\langle {\bf t}, {\bm v}_1 \rangle) - \exp(i\langle {\bf t}, {\bm v}_1^\prime \rangle)\big)\big(\E\{\exp(-i\langle {\bf t}, {\bm V}_2 \rangle)\} - \E\{\exp(-i\langle {\bf t}, {\bm V}_2^\prime \rangle)\}\big) \phi_0({\bf t}) \mathrm d{\bf t}\\
        & = \int (\exp(i\langle {\bf t}, x_1 \rangle) - \exp(i\langle {\bf t}, {\bm v}_1^\prime \rangle))(\varphi_{1}(-{\bf t}) - \varphi_{2}(-{\bf t})) \phi_0({\bf t}) \mathrm d{\bf t}\\
        & = \int (\exp(i\langle {\bf t}, {\bm v}_1 \rangle) - \exp(i\langle {\bf t}, {\bm v}_1^\prime \rangle))\overline{(\varphi_{1}({\bf t}) - \varphi_{2}({\bf t}))} \phi_0({\bf t}) {\mathrm d}{\bf t}.
     \end{align*}
Then using Cauchy-Schwartz inequality, one has
$$g_1^2({\bm x}_1,{\bm x}_1^\prime, \bm y_1, \bm z_1) \leq \int |\exp(i\langle {\bf t}, {\bm v}_1 \rangle) - \exp(i\langle {\bf t}, {\bm v}_1^\prime \rangle)|^2 \phi_0({\bf t}) {\mathrm d}{\bf t} \times \int |\varphi_{1}({\bf t}) - \varphi_{2}({\bf t})|^2 \phi_0({\bf t}) {\mathrm d}{\bf t} \leq 2 \zeta_\sigma(\Pr).$$

This gives
$$\var(\hat\zeta_{n,\sigma}) \leq {n\choose 2}^{-1}\left[4(n-2) \zeta_\sigma(\Pr) + 4\right].$$
This completes the proof.    
\end{proof}

\begin{proof}[\bf Proof of Theorem \ref{thm:minimax-lower-bound}:]
    We prove this theorem using a standard application of the Neyman-Pearson lemma. Let $Q_1$ and $Q_2$ be the joint distribution of the sample $\{(\bm X_i, \bm X_i^\prime,\bm Y_i, \bm Z_i)\}_{i=1}^n$ under the null and alternative hypotheses respectively. Then we can lower bound the minimax risk $R_{n}(\epsilon)$ as follows
    $$R_n(\epsilon)\geq 1-\alpha - d_{TV}(Q_1,Q_2)\geq 1-\alpha-\sqrt{\frac{1}{2}KL(Q_1,Q_2)}.$$
    The first inequality follows using the fact that $\P_{Q_1}\{\phi=1\}\leq \alpha$ and
    $$\P_{Q_1}\{\phi=1\} + \P_{Q_2}\{\phi=0\} = 1-(\P_{Q_1}\{\phi=0\}-\P_{Q_2}\{\phi=0\}) \geq 1-d_{TV}(Q_1,Q_2)$$
    where $d_{TV}$ denotes the total variation distance between $Q_1$ and $Q_2$. The second inequality follows from Pinsker's inequality \citep[see.][]{tsybakov2009introduction}. Suppose $G$ is a distribution that admits a p.d.f. $g$ with $\zeta_\sigma(G)=0$ and $F$ is a distribution with p.d.f. $f$ such that $\zeta_\sigma(F) = \gamma_0>0$. Assume that $\int (f({\bf u})/g({\bf u})-1)^2 g({\bf u})\mathrm d{\bf u} = \gamma_1<\infty$. Then setting $Q_1 = \prod_{i=1}^n \Big(\big(1-\frac{\delta}{\sqrt{n}}\big)G+\frac{\delta}{\sqrt{n}} F\Big)$ for some $\delta>0$ and $Q_2 = G^n$, we have
    \begin{align*}
        KL(Q_1,Q_2) & = \int \log\prod_{i=1}^n\left\{1+\frac{\delta}{\sqrt{n}}\big(\frac{f({\bf u}_i)}{g({\bf u}_i)}-1\big)\right\} \prod_{i=1}^n d\Big(\big(1-\frac{\delta}{\sqrt{n}}\big)G+\frac{\delta}{\sqrt{n}} F\Big)({\bf u}_i)\\
        & = n \int \log\left\{1+\frac{\delta}{\sqrt{n}}\big(\frac{f({\bf u}_1)}{g({\bf u}_1)}-1\big)\right\} d\Big(\big(1-\frac{\delta}{\sqrt{n}}\big)G+\frac{\delta}{\sqrt{n}} F\Big)({\bf u}_1)\\
        & = n \big(1-\frac{\delta}{\sqrt{n}}\big) \int \log\left\{1+\frac{\delta}{\sqrt{n}}\big(\frac{f({\bf u}_1)}{g({\bf u}_1)}-1\big)\right\} g({\bf u}_1) \mathrm d {\bf u}_1\\
        &~~~~~~~~~~~+n\frac{\delta}{\sqrt{n}} \int \log\left\{1+\frac{\delta}{\sqrt{n}}\big(\frac{f({\bf u}_1)}{g({\bf u}_1)}-1\big)\right\} f({\bf u}_1) \mathrm d{\bf u}_1
    \end{align*}
    Using the inequality $\log(1+y)\leq y$ we get,
    \begin{align*}
        KL(Q_1,Q_2) & \leq n\left[ \frac{\delta}{\sqrt{n}} (1-\frac{\delta}{\sqrt{n}}) \int \big(\frac{f({\bf u})}{g({\bf u})}-1\big) g({\bf u}) d{\bf u} + \frac{\delta^2}{n} \int \big(\frac{f({\bf u})}{g({\bf u})}-1\big) f({\bf u}) \mathrm d{\bf u} \right]\\
        & = \delta^2 \left[\int \frac{f^2({\bf u})}{g({\bf u})} - 1\right] \mathrm d {\bf u}= \delta^2 \int \left(\frac{f({\bf u})}{g({\bf u})}-1\right)^2 g({\bf u}) {\mathrm d}{\bf u} = \delta^2\gamma_1.
    \end{align*}
    Also by Lemma \ref{lem:relation-contamination} we have,
    $$\zeta_\sigma\left(\Big(\big(1-\frac{\delta}{\sqrt{n}}\big)G+\frac{\delta}{\sqrt{n}} F\Big)\right) = \frac{\delta^2}{n} \zeta_\sigma(F) = \frac{\delta^2}{n} \gamma_0. $$
    Now for any $0<\beta<1-\alpha$ if we choose $\delta = \sqrt{2/\gamma_1}(1-\alpha-\beta)$ we get, 
    $$\zeta_\sigma\left(\Big(\big(1-\frac{\delta}{\sqrt{n}}\big)G+\frac{\delta}{\sqrt{n}} F\Big)\right) = \frac{1}{n} \left(\frac{2\gamma_0(1-\alpha-\beta)^2}{\gamma_1}\right). $$
    Now define, $c(\alpha,\beta) = \big(2\gamma_0(1-\alpha-\beta)^2\big)/\gamma_1$. Then, $\big((1-\frac{\delta}{\sqrt{n}})G+\frac{\delta}{\sqrt{n}} F\big)\in \mathcal{F}(cn^{-1}) = \{F\mid \zeta_\sigma(F)>cn^{-1}\}$ for all $0<c<c(\alpha,\beta)$. For this choice of alternative, we also have $R_n(cn^{-1})\geq \beta$ for all $0<c<c(\alpha,\beta)$. Since $\beta$ and $c(\alpha,\beta)$ does not depend on $n$, this trivially satisfies the condition $\liminf_{n\to\infty} R_n(cn^{-1}) \geq \beta$ for all $0<c<c(\alpha,\beta)$.
    
\end{proof}

\begin{proof}[\bf Proof of Theorem \ref{thm:minimax-upper-bound}:]
    Here we want to show that for every $0<\alpha<1$ and $0<\beta<1-\alpha$ there exists a constant $C(\alpha,\beta)>0$ such that
    $$\limsup_{n\to\infty}\sup_{F\in \mathcal{F}(cn^{-1})} \P_{F^n}\{\hat\zeta_{n,\sigma}\leq c_{1-\alpha}\} \leq \beta$$
    for all $c>C(\alpha,\beta)$. Now take any $\Pr\in\mathcal{F}(cn^{-1})$ with $c>4/\alpha$ (i.e. $\zeta_\sigma({\Pr})>4/n\alpha$). Using the fact $c_{1-\alpha}\leq 2((n-1)\alpha)^{-1}$ and Chebyshev's inequality, we have
    \begin{align*}
        \P_{F^n}\{\hat\zeta_{n,\sigma}\leq c_{1-\alpha}\} \leq \P_{F^n}\{\hat\zeta_{n,\sigma}\leq 2((n-1)\alpha)^{-1}\} & \leq \P_{F^n}\{\zeta_\sigma(\Pr)-\hat\zeta_{n,\sigma}\geq \zeta_\sigma(\Pr) - 2((n-1)\alpha)^{-1}\}\\
        & \leq \frac{\var\big(\hat\zeta_{n,\sigma}\big)}{\big(\zeta_\sigma(\Pr) - 2((n-1)\alpha)^{-1}\big)^2},
    \end{align*}
    which holds since $\zeta_\sigma(\Pr)-2((n-1)\alpha)^{-1}>4(n\alpha)^{-1}-2((n-1)\alpha)^{-1}=
    \frac{2n-4}{n(n-1)\alpha}>0$ for all $n\geq 2$. Now,
    \begin{align}
    \label{eq:chebyshev-bound}
        \frac{\var\big(\hat\zeta_{n,\sigma}\big)}{\big(\zeta_\sigma(\Pr) - 2((n-1)\alpha)^{-1}\big)^2} \leq \frac{{n\choose 2}^{-1}\left[4(n-2) \zeta_\sigma(\Pr) + 4\right]}{\big(\zeta_\sigma(\Pr)-2((n-1)\alpha)^{-1}\big)^2}~~~~~~ \mbox{(follows from Lemma \ref{lem:relation-contamination})}
    \end{align}
    which implies that
    \begin{align*}
        \limsup_{n\to\infty}\sup_{F\in\mathcal{F}(cn^{-1})} \P_{F^n}\{\hat\zeta_{n,\sigma}\leq c_{1-\alpha}\} \leq \frac{4c + 4}{\big(c-2/\alpha\big)^2}.
    \end{align*}
    It is easy to see that the upper bound is a monotonically decreasing function of $c$ for $c>4/\alpha$ and it converges to $0$ as $c$ increases. Hence for any $\beta<1-\alpha$, there exists a $r(\alpha,\beta)$ such that the upper bound is smaller than $\beta$ whenever $c>r(\alpha,\beta)$. Now set $C(\alpha,\beta) = \max\{r(\alpha,\beta),4/\alpha\}$. Then for any $c>C(\alpha,\beta)$ the maximum type II error rate of our test is upper bounded by $\beta$. This completes the proof of this theorem.

\end{proof}

\begin{proof}[\bf Proof of Proposition \ref{thm:high-dim-consistency}:]
    Take any distribution $\Pr$. Using the fact $\hat c_{1-\alpha}\leq 2((n-1)\alpha)^{-1}$ and Chebyshev's inequality, one has
    \begin{align*}
        \P_{\Pr^n}\{\hat\zeta_{n,\sigma}\leq \hat c_{1-\alpha}\} & \leq \P_{\Pr^n}\{\hat\zeta_{n,\sigma}\leq 2((n-1)\alpha)^{-1}\}\nonumber\\
        & \leq \P_{\Pr^n}\{\zeta_\sigma(\Pr)-\hat\zeta_{n,\sigma}\geq \zeta_\sigma(\Pr) - 2((n-1)\alpha)^{-1}\}\nonumber\\
        & \leq \frac{\var\big(\hat\zeta_{n,\sigma}\big)}{\big(\zeta_\sigma(\Pr) - 2((n-1)\alpha)^{-1}\big)^2},
    \end{align*}
    which holds since $\zeta_\sigma(\Pr)-2((n-1)\alpha)^{-1}>4(n\alpha)^{-1}-2((n-1)\alpha)^{-1}=
    \frac{2n-4}{n(n-1)\alpha}>0$ for all $n\geq 2$. Now,
    \begin{align}
        \P_{\Pr^n}\{\hat\zeta_{n,\sigma}\leq \hat c_{1-\alpha}\} & \leq \frac{\var\big(\hat\zeta_{n,\sigma}\big)}{\big(\zeta_\sigma(\Pr) - 2((n-1)\alpha)^{-1}\big)^2}\nonumber\\
        & \leq \frac{{n\choose 2}^{-1}\left[4(n-2) \zeta_\sigma(\Pr) + 4\right]}{\big(\zeta_\sigma(\Pr)-2((n-1)\alpha)^{-1}\big)^2}~~~~~~ \text{(follows from Proposition 3.2)}\label{eq:type-II-bound}
    \end{align}
    Now, if the distribution $\Pr$ is such that $n\zeta_\sigma(\Pr)$ diverges to infinity, then from equation \eqref{eq:type-II-bound} one gets $\lim\P\{\hat\zeta_{n,\sigma}\leq \hat c_{1-\alpha}\} = 0$. Hence, under the above condition, the power of our test converges to one. 
\end{proof}

{\begin{proof}[\bf Proof of Theorem \ref{thm:excess-type-I}:]

Recall the definition of $p_{n,B}:$
    $$p_{n,B} = \frac{1+\sum_{i=1}^B \mathrm I\big\{\Tilde{\zeta}_{n,\sigma}(\pi_i)\geq \hat\zeta_{n,\sigma}\big\}}{1+B},$$
where $\hat\zeta_{n,\sigma}$ is the estimator of $\zeta_{\sigma}(\mathrm P)$ and $\Tilde{\zeta}_{n,\sigma}(\pi)$ is the resample analog of $\hat\zeta_{n,\sigma}$. For the following arguments we use the notations
\begin{align}
    \hat\zeta_{n,\sigma} = \hat\zeta_{n,\sigma}\big(\mathcal{X}, \mathcal{X}^\prime, \mathcal{Y}, \mathcal{Z}\big)\text{ and }\Tilde{\zeta}_{n,\sigma}(\Pi) = \hat\zeta_{n,\sigma}\big(\Pi\mathcal{X} + (\bm 1-\Pi)\mathcal{X}^\prime, (\bm 1 - \Pi)\mathcal{X} + \Pi\mathcal{X}^\prime, \mathcal{Y}, \mathcal{Z}\big),
\end{align}
where $\mathcal{X} = \{\bm X_1,\bm X_2,\ldots, \bm X_n\}, \mathcal{Y} = \{\bm Y_1,\bm Y_2,\ldots, \bm Y_n\}, \mathcal{Z} = \{\bm Z_1,\bm Z_2,\ldots, \bm Z_n\}$ and $\bm 1 = (1,1,\ldots,1)$, $n$-dimensional vector with each entry being $1$. This helps to put an emphasis on the dependence of the estimators on the samples.

Now assume $\bm X_i^{(1)}\sim Q_{\bm X|\bm Z_i}^{(n)}, i=1,2,\ldots, n$ and define $\mathcal{X}^{(1)}=\{\bm X_1^{(1)},\bm X_{2}^{(1)},\ldots, \bm X_n^{(1)}\}$. Then the randomized p-value based on this approximate sampling scheme is
$$p_{n,B} = \frac{1+\sum_{i=1}^B \mathrm I\big\{\hat\zeta_{n,\sigma}\big(\pi_i\mathcal{X} + (\bm 1-\pi_i)\mathcal{X}^{(1)}, (\bm 1 - \pi_i)\mathcal{X} + \pi_i\mathcal{X}^{(1)}, \mathcal{Y}, \mathcal{Z}\big) \geq \hat\zeta_{n,\sigma}\big(\mathcal{X}, \mathcal{X}^{(1)}, \mathcal{Y}, \mathcal{Z}\big)\big\} }{1+B}.$$
Now define,
$$A_\alpha = \left\{(\mathcal{x},\mathcal{x}^{(1)}): \frac{1+\sum_{i=1}^B \mathrm I\big\{\hat\zeta_{n,\sigma}\big(\pi_i\mathcal{x} + (\bm 1-\pi_i)\mathcal{x}^{(1)}, (\bm 1 - \pi_i)\mathcal{x} + \pi_i\mathcal{x}^{(1)}, \mathcal{Y}, \mathcal{Z}\big) \geq \hat\zeta_{n,\sigma}\big(\mathcal{x}, \mathcal{x}^{(1)}, \mathcal{Y}, \mathcal{Z}\big)\big\} }{1+B}\leq \alpha\right\}.$$
Note that  under the assumption $\bm X_i\indpt \bm Y_i|\bm Z_i$ and $\bm X_i|\bm Z_i \stackrel{D}{=}\bm X_i^{\prime}|\bm Z_i$ for all $i=1,2,\ldots,n$, we have
$$\P\left[\big(\mathcal{X},\mathcal{X}^{\prime}\big)\in A_{\alpha}|\mathcal{Y},\mathcal{Z}\right] = \frac{\lfloor(B+1)\alpha\rfloor}{(B+1)}.$$
Now,
\begin{align*}
    & \P\left[p_{n,B}\leq \alpha|\mathcal{Y}, \mathcal{Z}\right]\\
    & = \P\left[\big(\mathcal{X},\mathcal{X}^{(1)}\big)\in A_{\alpha}|\mathcal{Y},\mathcal{Z}\right]\\
    & \leq \P\left[\big(\mathcal{X},\mathcal{X}^{\prime}\big)\in A_{\alpha}|\mathcal{Y},\mathcal{Z}\right] + d_{\mathrm{TV}}\left(\mathcal{L}\big((\mathcal{X},\mathcal{X}^{(1)})|\mathcal{Y},\mathcal{Z}\big),\mathcal{L}\big((\mathcal{X},\mathcal{X}^{\prime})|\mathcal{Y},\mathcal{Z}\big)\right)\\
    & = \frac{\lfloor(B+1)\alpha\rfloor}{(B+1)} + d_{\mathrm{TV}}\left(\mathcal{L}\big(\mathcal{X}^{(1)}|\mathcal{Z}\big),\mathcal{L}\big(\mathcal{X}^{\prime}|\mathcal{Z}\big)\right)\\
    & = \frac{\lfloor(B+1)\alpha\rfloor}{(B+1)} + d_{\mathrm{TV}}\Bigg(\prod_{i=1}^n Q_{\bm X|\bm Z_i}^{(n)}, \prod_{i=1}^n P_{\bm X|\bm Z_i}\Bigg).
\end{align*}
This completes the proof.
\end{proof}

\begin{proof}[\bf Proof of Proposition \ref{prop:contiguity-consistency}:]
    Note that under the condition 
    $$d_{\mathrm{TV}}\big(\prod_{i=1}^n Q_{\bm X|\bm Z_i}^{(n)}, \prod_{i=1}^n P_{\bm X|\bm Z_i}\big) = o_P(1),$$
    a simple application of Dominated Convergence Theorem states that
    \begin{align*}
        & d_{\mathrm{TV}}\big(\prod_{i=1}^n Q_{(\bm X_i,\bm X_1^{(1)}\bm Y_1,\bm Z_1)}^{(n)}, \prod_{i=1}^n P_{\bm X_i, \bm X_i^\prime, \bm Y_i, \bm Z_i}\big)\\
        & \leq \E_{\{(\bm Y_i,\bm Z_i)\}_{i=1}^n\}}\Big(d_{\mathrm{TV}}\big(\prod_{i=1}^n P_{X|\bm Y_i, \bm Z_i}\otimes Q_{\bm X|\bm Z_i}^{(n)}, \prod_{i=1}^n P_{X|\bm Y_i, \bm Z_i}\otimes P_{\bm X|\bm Z_i}\big)\Big)\\
        & \leq \E_{\{(\bm Y_i,\bm Z_i)\}_{i=1}^n\}}\Big(d_{\mathrm{TV}}\big(\prod_{i=1}^n Q_{\bm X|\bm Z_i}^{(n)}, \prod_{i=1}^n P_{\bm X|\bm Z_i}\big)\Big)\rightarrow 0
    \end{align*}
    as $n$ diverges to infinity. Hence, the joint distribution of the sample $\{(\bm X_i, \bm X_i^{(1)}, \bm Y_i, \bm Z_i)\}_{i=1}^n$ and that of $\{(\bm X_i, \bm X_i^{\prime}, \bm Y_i, \bm Z_i)\}_{i=1}^n$ are mutually contiguous. Since, under the exact sampling scheme we have $\hat\zeta_{n,\sigma}\stackrel{\P}{\rightarrow}\zeta_\sigma(\mathrm P)$, by the definition of contiguity we have
    $$\hat\zeta_{n,\sigma}\stackrel{\P}{\rightarrow}\zeta_\sigma(\mathrm P)$$
    as $n\rightarrow\infty$, under approximate null exchangeable pair sampling scheme.
    
\end{proof}
}

\end{document}